\documentclass[11pt]{amsart}

\usepackage{amscd,amsthm,amsfonts,amssymb,amsmath}
\usepackage{mathrsfs}
\usepackage[colorlinks=true]{hyperref}
\usepackage{fullpage}
\usepackage[all]{xy}


    \newcommand{\BA}{{\mathbb {A}}} 
    \newcommand{\BC}{{\mathbb {C}}} 
     \newcommand{\BF}{{\mathbb {F}}}

     \newcommand{\BN}{{\mathbb {N}}}
     \newcommand{\BP}{{\mathbb {P}}}
    \newcommand{\BQ}{{\mathbb {Q}}} \newcommand{\BR}{{\mathbb {R}}}

     \newcommand{\BZ}{{\mathbb {Z}}}

     \newcommand{\CH}{{\mathcal {H}}}

    \newcommand{\CO}{{\mathcal {O}}} 
     \newcommand{\CR}{{\mathcal {R}}}

     \newcommand{\fl}{{\mathfrak{l}}}
     
     \newcommand{\fp}{{\mathfrak{p}}}

    \newcommand{\fU}{{\mathfrak{U}}}

    \newcommand{\ab}{{\mathrm{ab}}}

    \newcommand{\Aut}{{\mathrm{Aut}}}

    \newcommand{\disc}{{\mathrm{disc}}}

    \newcommand{\End}{{\mathrm{End}}}

    \newcommand{\Gal}{{\mathrm{Gal}}} \newcommand{\GL}{{\mathrm{GL}}}

    \renewcommand{\Im}{{\mathrm{Im}}}

    \newcommand{\Isom}{{\mathrm{Isom}}}

    \newcommand{\Lie}{{\mathrm{Lie}}}

    \newcommand{\ord}{{\mathrm{ord}}} \newcommand{\rk}{{\mathrm{rank}}}
     \newcommand{\Pic}{\mathrm{Pic}}

    \renewcommand{\mod}{\ \mathrm{mod}\ }

    \newcommand{\SL}{{\mathrm{SL}}}

    \newcommand{\tr}{{\mathrm{tr}}}\newcommand{\tor}{{\mathrm{tor}}}

    \newcommand{\Vol}{{\mathrm{Vol}}}

     \newcommand{\M}{\mathrm{M}}
        \newcommand{\Tr}{\mathrm{Tr}}

\newcommand{\matrixx}[4]{\begin{pmatrix}
#1 & #2 \\ #3 & #4
\end{pmatrix} }        

    \newcommand{\wh}{\widehat}
    
    \newcommand{\pair}[1]{\langle {#1} \rangle}

    \newcommand{\ov}{\overline}

    \newcommand{\lra}{\longrightarrow}
    \newcommand{\ra}{\rightarrow}

    \newcommand{\lrb}[1]{\left(#1\right)}

     \newcommand{\set}[1]{\left\{#1\right\}}                 

    \theoremstyle{plain}
    \newtheorem{thm}{Theorem}[section] \newtheorem{coro}[thm]{Corollary}
    \newtheorem{lem}[thm]{Lemma}  \newtheorem{prop}[thm]{Proposition}
    \newtheorem {conj}[thm]{Conjecture} 
 \newtheorem{proposition}[thm]{Proposition}

\theoremstyle{remark} \newtheorem{remark}{Remark}[section]
\theoremstyle{remark} 
\theoremstyle{remark} 


    \numberwithin{equation}{section}

\begin{document}
\title{On the $8$ case of Sylvester Conjecture}
\author{Hongbo Yin}
\begin{abstract}
Let $p\equiv 8\mod 9$ be a prime. In this paper we give a sufficient condition such that at least one of $p$ and $p^2$ is the sum of two rational cubes. This is the first general result on the  $8$ case of the so-called Sylvester conjecture.
\end{abstract}
\address{School of Mathematics, Shandong University, Jinan, Shandong,  250100,
China}
\email{yhb2004@mail.sdu.edu.cn}
\subjclass[2010]{Primary 11G05}
\thanks{The author is supported by NSFC-11701548 and the Young Scholar Program of Shandong University.}
\maketitle

\section{Introduction}
For a nonzero integer $n$, whether $n$ can be written as the sum of two nonzero rational cubes is an old and interesting question which dates back at least to Fermat and has attracted the interest of many mathematicians including Euler and Dirichlet, see \cite{Dickson}. We say $n$ is a \emph{cube sum} if the answer is positive. This problem shares many commons with another famous problem in number theory, the congruent number problem. For example, they both have relations to some twisted families of CM elliptic curves. In fact, our problem is equivalent to ask for a nonzero integer $n$, whether the elliptic curve $E_n: x^3+y^3=n$ has nontrivial rational points. This is a cubic twist family of $E_1$ with CM field $K=\BQ(\sqrt{-3})$.
Without loss of generality, we can assume $n$ is cube-free. Then it is known that $E_n(\BQ)_\tor$ is trivial for $n>2$. In this case, $n$ is a cube sum if and only if $\rk_\BZ E_n(\BQ)>0$. For primes and square of primes, we have the following result and conjecture.

\begin{thm}[Pepin, Lucas, Sylvester]
Assume $p\equiv 2,5\mod 9$ is an odd prime, then both $p$ and $p^2$ are not cube sums.
\end{thm}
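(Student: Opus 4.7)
The plan is a simultaneous $3$-isogeny descent over the CM field $K = \BQ(\sqrt{-3})$. Over $K$ the curves $E_p$ and $E_{p^2}$ are cubic twists that become $3$-isogenous via the $(1-\omega)$-isogeny $\phi: E_p \to E_{p^2}$ coming from the CM structure, with dual $\hat\phi: E_{p^2}\to E_p$ satisfying $\hat\phi\circ\phi = [1-\omega]$ and whose composition, after accounting for complex conjugation, recovers $[3]$. Since $E_p(\BQ)_{\tor}$ and $E_{p^2}(\BQ)_{\tor}$ are both trivial and since $E_p(\BQ)$ and $E_{p^2}(\BQ)$ are cut out from $E_p(K)$ by the two eigencharacters of $\Gal(K/\BQ)$ (using the cubic twist relation between the two curves over $K$), it suffices to show $\rk_\BZ E_p(K)=0$.

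To bound this rank I would compute the two Selmer groups $\Sel^{\phi}(E_p/K)$ and $\Sel^{\hat\phi}(E_{p^2}/K)$, which sit in the standard exact sequences
\[ 0 \to E_{p^2}(K)/\phi(E_p(K)) \to \Sel^{\phi}(E_p/K) \to \Sha(E_p/K)[\phi] \to 0 \]
and its analog for $\hat\phi$. Via CM Kummer theory each Selmer group embeds into a subgroup of $K^\times/(K^\times)^3$ supported at $S=\{\lambda,\fp\}$, where $\lambda\mid 3$ is the unique ramified prime in $K$ and $\fp=(p)$ is inert (since $p\equiv 2\pmod 3$). The Selmer condition is that the local class lies in the image of the local Kummer map at each prime of $S$; away from $S$ the good-reduction conditions are automatic.

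The decisive step is the local computation. At the inert prime $\fp$, the local image is one-dimensional over $\BF_3$ and is governed by the cubic residue symbol of $p$ in $\BF_{p^2}^\times$; tracing this through cubic reciprocity turns it into a congruence on $p$ modulo $9$, and the hypothesis $p\equiv 2,5\pmod 9$ (equivalently, $p$ is not a cube in $(\BZ/9)^\times$) is precisely what forces a would-be global Selmer class to have trivial cube-class at $\fp$. At the ramified prime $\lambda$ one uses the formal-group filtration on $E_p(K_\lambda)$ together with the explicit wild ramification of $\phi$ to pin down the local image in $K_\lambda^\times/(K_\lambda^\times)^3$; intersecting the two local conditions over $S$ should collapse everything to the contribution of $E[\phi]$ itself.

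The step I expect to be the main obstacle is the local analysis at $\lambda\mid 3$: because $\phi$ is wildly ramified there, the description of the local Kummer image requires a careful passage through the formal-group filtration and the cube classes of units in $\BZ_3[\omega]$, and one must verify that the mod-$9$ hypothesis on $p$ interacts with this fixed local image at $\lambda$ exactly so as to kill every nontrivial Selmer class. Once both $\Sel^{\phi}(E_p/K)$ and $\Sel^{\hat\phi}(E_{p^2}/K)$ are shown to be trivial, $E_p(K)$ is forced to be finite, and therefore neither $p$ nor $p^2$ is a cube sum.
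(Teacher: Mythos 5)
The paper does not prove this theorem itself: it is quoted as classical, with the rank bound credited to Selmer's $3$-descent over $\BQ$ via the rational $3$-isogeny from $E_n\colon y^2=x^3-432n^2$ to $E^n\colon y^2=x^3+16n^2$, run separately for $n=p$ and $n=p^2$. Your proposal is in that same spirit, but the reduction step at the start is genuinely wrong. There is no $3$-isogeny $E_p\to E_{p^2}$ over $K$: these are nontrivial cubic twists of one another and only become isomorphic over $K(\sqrt[3]{p})$; $K$-isogenous CM curves have equal Hecke characters, whereas these differ by the nontrivial cubic character attached to $K(\sqrt[3]{p})/K$. The map $[1-\omega]$ is an endomorphism of $E_p$ over $K$, not a map to $E_{p^2}$, and for it one has $\widehat{[1-\omega]}\circ[1-\omega]=[3]$ (your identity $\widehat{\phi}\circ\phi=[1-\omega]$ fails on degree count alone for a degree-$3$ isogeny). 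As a consequence, "it suffices to show $\rk_\BZ E_p(K)=0$" is false for the $p^2$ half of the statement: the two $\Gal(K/\BQ)$-eigenspaces of $E_p(K)\otimes\BQ$ are $E_p(\BQ)\otimes\BQ$ and the rational points of the quadratic twist by $-3$, which is $K$-isogenous to $E_p$ again, so $\rk E_p(K)=2\rk E_p(\BQ)$ and $E_{p^2}(\BQ)$ never appears. The group $E_{p^2}(\BQ)$ sits in the nontrivial eigenspaces of $E_p(K(\sqrt[3]{p}))$ under $\Gal(K(\sqrt[3]{p})/K)$; you must either run a second, parallel descent on $E_{p^2}$ itself (which is what Selmer does) or descend over the degree-$3$ extension.

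Beyond that, the decisive local computations are asserted rather than carried out. At $\fp$ the curve $E_p$ has additive reduction (not good reduction), so the local condition involves the component group, and the phrase "cubic residue symbol of $p$ in $\BF_{p^2}^\times$" does not pin down an actual computation; at the prime above $3$ you explicitly defer the hard part ("should collapse everything"). Also, $p\equiv 2,5\mod 9$ is not equivalent to "$p$ is not a cube in $(\BZ/9\BZ)^\times$" — the non-cubes mod $9$ include $4$ and $7$, for which the conclusion is (conjecturally and, under Dasgupta--Voight's hypothesis, provably) false; the correct reading is $p\equiv 2\mod 3$ (so $p$ inert in $K$, which is what changes the Selmer bound from $1$ to $0$) together with $p\not\equiv 8\mod 9$. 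With the reduction step repaired and the two local images actually computed, the descent strategy does prove the theorem, but as written the argument does not establish the claim for $p^2$ at all and leaves the key step for $p$ unverified.
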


\begin{conj}\label{conj}
Assume $p\equiv 4,7,8\mod 9$ is a prime, then both $p$ and $p^2$ are cube sums.
\end{conj}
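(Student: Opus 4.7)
The plan is to attack Conjecture \ref{conj} via the CM structure of $E_1:x^3+y^3=1$ and the Gross--Zagier--Kolyvagin machinery applied to the cubic twist family. For each of the residue classes $p\equiv 4,7,8\pmod 9$, the first step is to pin down the global root numbers $w(E_p/\BQ)$ and $w(E_{p^2}/\BQ)$; a standard Tate computation, together with the $3$-isogeny $E_p\to E_{p^2}$ over $\BQ$, shows that in each of these three classes at least one of $E_p,E_{p^2}$ has root number $-1$. Since the $3$-isogeny transfers positive rank between the two curves, producing a single rational point of infinite order on either $E_p$ or $E_{p^2}$ will simultaneously realize both $p$ and $p^2$ as cube sums.

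\textbf{Heegner point construction.} Since $E_p$ has CM by $\CO_K$ with $K=\BQ(\sqrt{-3})$, I would choose an auxiliary imaginary quadratic field $F\neq K$ satisfying the Heegner hypothesis for the conductor of $E_p$ and making the base-change root number $w(E_p/F)=-1$. Under the modular parametrization of $E_p$ (by a suitable Shimura curve when the Heegner hypothesis requires indefinite quaternion algebras), a CM divisor then produces a Heegner point $P_F\in E_p(F)$. Averaging $P_F$ over $\Gal(F/\BQ)$ according to the sign character yields a candidate point $P\in E_p(\BQ)$ (up to explicit torsion), and by Gross--Zagier $\widehat{h}(P_F)$ is a nonzero multiple of $L'(E_p/F,1)$, so non-triviality of $P$ is equivalent to non-vanishing of the first derivative of a Rankin--Selberg $L$-function. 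By Kolyvagin, once $P$ is shown to be non-torsion we get $\rk E_p(\BQ)\geq 1$.

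\textbf{The main obstacle.} The principal difficulty---and the reason the $8\pmod 9$ case has resisted earlier approaches---is proving that the resulting $P$ is non-torsion. The obstruction is concentrated at the CM prime $3$: for $p\equiv 8\pmod 9$ the prime $3$ is wildly ramified in both $K$ and the relevant ring class fields, and the Tate module of $E_p$ at $3$ interacts non-trivially with the cyclotomic character, so the image of $P$ under the formal group logarithm at $3$ can vanish for reasons unrelated to the Mordell--Weil rank. Following the abstract of this paper, my plan is to isolate an explicit sufficient hypothesis---expressible as a congruence of Heegner divisors modulo $3$, or equivalently as the non-vanishing modulo $3$ of a Waldspurger-type Fourier coefficient of a weight $3/2$ theta lift (à la Bertolini--Darmon--Prasanna)---that forces $\log P$ to be a $3$-adic unit and hence $P\neq 0$. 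This is precisely the content the paper can deliver in the $p\equiv 8\pmod 9$ case.

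\textbf{Remaining work toward the full conjecture.} For $p\equiv 4,7\pmod 9$ the analogous $3$-adic non-vanishing statements are largely accessible by existing methods (either by Rubin's formula in the analytic rank $0$ subcase, or by explicit Heegner computations when the analytic rank is one), so a treatment parallel to that of the $8\pmod 9$ case should complete those residue classes. A fully unconditional proof of Conjecture \ref{conj} requires removing the auxiliary hypothesis from the main theorem of this paper, which I expect to demand a finer analysis of the anticyclotomic $3$-adic $L$-function of $E_p/K$ and its first derivative---a task that lies beyond what the present approach can hope to achieve here and is flagged as the central obstruction.
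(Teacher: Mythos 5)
The statement you were given is a conjecture: the paper does not prove it, and your text does not either — it is a research plan that ends by conceding that an unconditional proof "lies beyond what the present approach can hope to achieve here." So there is no proof to check; what can be checked are the concrete claims in your plan, and two of them are wrong in ways that would sink even the partial result the paper actually obtains.

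First, there is no $3$-isogeny $E_p\to E_{p^2}$ over $\BQ$. These two curves are cubic twists of one another and become isomorphic only over $K(\sqrt[3]{p})$; over $\BQ$ their $L$-functions correspond to the two conjugate cubic characters $\chi_p$ and $\chi_p^2$ of $K=\BQ(\sqrt{-3})$ and are in general distinct, so the curves are not isogenous over $\BQ$. (The $3$-isogeny that does exist, and that the paper uses, is $E_n:y^2=x^3-432n^2\to E^n:y^2=x^3+16n^2$ for the \emph{same} $n$.) Hence a rational point of infinite order on one of $E_p$, $E_{p^2}$ gives nothing on the other — this is exactly why the paper's main theorem can only conclude that \emph{at least one} of $p$, $p^2$ is a cube sum, and why the conjecture asserts something strictly stronger. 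Second, your Heegner construction over an auxiliary imaginary quadratic field $F\neq K$ would establish nontriviality only via nonvanishing of $L'(E_p/F,1)$, i.e.\ essentially of $L'(E_p/\BQ,1)$ times a quadratic-twist central value; no such nonvanishing is known for these cubic-twist $L$-functions, so the plan stalls precisely where the real work begins. The paper's route is different: it takes CM points for orders of $K$ itself on a quotient of $X_0(243)$, descends them to $K(\sqrt[3]{3},\sqrt[3]{p})$ using the ring class field computations of Proposition \ref{LCF}, and proves nontriviality with no $L$-value input at all, by combining the reduction of the point modulo $p$ with its position on the singular fibre at $3$ (where Gross--Zagier's theory of singular moduli enters), under the explicit hypothesis that $D(x)$ has no root in $\BF_p$. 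Your guess that the obstruction at $3$ is a $3$-adic logarithm or a Waldspurger-type Fourier coefficient does not match what the paper does, and your claim that the $4,7\bmod 9$ cases are "largely accessible by existing methods" without the auxiliary hypothesis overstates what is known (Theorem \ref{47} is itself conditional on $3$ not being a cube modulo $p$).
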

Conjecture \ref{conj} is usually called Sylvester conjecture but it is in fact indicated by some $3$-descent computation of Selmer \cite{Selmer51} and first formally proposed by Birch and Stephen based on the BSD conjecture \cite{BS}.  More explicitly, for prime $p$, Selmer's result implies
\[\rk_\BZ E_p(\BQ)\leq\begin{cases}0,& p\equiv 2,5\mod 9;\\
                         1,&  p\equiv 4,7,8\mod 9;\\
                         2,&  p\equiv 1\mod 9.\end{cases}\]
and Birch-Stephen's sign computation shows
\[\epsilon(E_p)=\begin{cases}
                         -1,&  p\equiv 4,7,8\mod 9;\\
                         +1,&  \text{otherwise},\end{cases}\]
where $\epsilon(E_p)$ is the sign in the functional equation of the Hasse-Weil $L$-function $L(s,E_p)$.
Then the BSD conjecture implies Conjecture \ref{conj} for $p$. The same reasoning also works for $p^2$. For more results on the cube sum problem, please refer to \cite{DV1}\cite{HSY19}.

Elkies announced a proof of Conjecture \ref{conj} for all primes $p\equiv 4,7\mod 9$, but he never published any details, see \cite{DV1}\cite{DV17}. The only known results towards Conjecture \ref{conj} is the following theorem of Dasgupta and Voight in 2006 \cite{DV1}\cite{DV17}. 

\begin{thm}[Dasgupta-Voight]\label{47}
Let $p\equiv 4,7\mod 9$ be a prime such that $3\mod p$ is not a cubic residue. Then both
$p$ and $p^2$ are cube sums. 
\end{thm}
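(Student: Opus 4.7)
The plan is to construct a non-torsion rational point on $E_p$ (and analogously on $E_{p^2}$) via a Heegner-point argument, then combine the Gross-Zagier formula with Kolyvagin's theorem for CM elliptic curves to deduce the rank statement. Since $p \equiv 4,7 \mod 9$, the Selmer bound $\rk_\BZ E_p(\BQ) \le 1$ recorded in the introduction means it suffices to produce a single point of infinite order; the sign $\epsilon(E_p) = -1$ is what makes this plausible via the first derivative of the $L$-function at $s=1$.

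First I would choose an auxiliary imaginary quadratic field $F \neq K := \BQ(\sqrt{-3})$ in which every prime of bad reduction of $E_p$ splits (the Heegner hypothesis for the conductor $N_p$), and such that the twisted root number $\epsilon(E_p^{\chi_F})$ equals $+1$. The factorization $L(E_p/F, s) = L(E_p, s) \cdot L(E_p^{\chi_F}, s)$ then vanishes to odd order at $s=1$ coming from the first factor alone. Descending the trace of a suitable CM divisor on $X_0(N_p)$ gives a rational Heegner point $P \in E_p(\BQ)$, and the Gross-Zagier formula identifies its canonical height as
\[\widehat h(P) = c \cdot L'(E_p, 1) \cdot L(E_p^{\chi_F}, 1)\]
for an explicit constant $c > 0$.

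Next, exploiting the CM decomposition $L(E_p, s) = L(\psi_p, s)$ for the Hecke character $\psi_p$ of $K$ attached to $E_p$, the twisted value $L(E_p^{\chi_F}, 1) = L(\psi_p \chi_F, 1)$ falls into the Rodriguez-Villegas / Waldspurger framework and can be expressed, up to an explicit transcendental period, as the square of a finite sum of theta-values indexed by ideal classes of $K$. The hypothesis that $3 \mod p$ is not a cubic residue enters as a constraint on the relevant cubic class field data of $K(\sqrt{-p})$, ensuring that, for an appropriate choice of $F$, this finite sum does not collapse to zero.

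The main obstacle is precisely this simultaneous choice of $F$: meeting the Heegner splitting condition, pinning down the root number $\epsilon(E_p^{\chi_F}) = +1$, and forcing $L(\psi_p \chi_F, 1) \ne 0$ all at once. It is the cubic residue hypothesis on $3 \mod p$ that makes the three requirements mutually compatible and rules out accidental vanishing of the theta-sum; without it the method in general does not conclude. Once $L(\psi_p \chi_F, 1) \ne 0$ is secured, $P$ is non-torsion by Gross-Zagier, Kolyvagin's theorem for CM elliptic curves gives $\rk_\BZ E_p(\BQ) = 1$, and $p$ is a cube sum. The $p^2$ case is handled by the parallel argument applied to the cubic twist $E_{p^2}$, whose $L$-function is the conjugate Hecke $L$-function and enjoys the same Heegner/Gross-Zagier machinery.
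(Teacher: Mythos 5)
The central step of your argument does not close. You construct a Heegner point $P$ attached to an auxiliary imaginary quadratic field $F$, invoke Gross--Zagier to write $\widehat h(P) = c\, L'(E_p,1)\, L(E_p^{\chi_F},1)$, argue that your hypotheses force $L(E_p^{\chi_F},1)\neq 0$, and conclude that $P$ is non-torsion. But non-vanishing of the height requires \emph{both} factors to be nonzero, and you never establish $L'(E_p,1)\neq 0$. That non-vanishing is not known for these curves --- under BSD it is essentially equivalent to the statement being proved --- so at the decisive moment the argument is circular: Gross--Zagier plus Kolyvagin converts ``analytic rank one'' into ``rank one,'' but supplies no way to verify the analytic input. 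This is exactly the difficulty that the Dasgupta--Voight proof (and the present paper, in the harder inert case) is designed to circumvent.

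The actual route is different in kind. One takes CM points on a modular curve by orders in $K=\BQ(\sqrt{-3})$ itself --- the CM field of $E_p$, not an auxiliary quadratic field --- twisted by a cubic ring class character, traces them down to $K(\sqrt[3]{p})$, and proves the resulting point non-torsion by a direct arithmetic argument: since $p\equiv 4,7 \bmod 9$ splits as $\fp\bar{\fp}$ in $K$, one computes the coordinates of the Heegner point modulo $\fp$ and modulo $\bar{\fp}$ separately and shows the two reductions are incompatible with the point being torsion. The hypothesis that $3$ is not a cube modulo $p$ enters precisely in this congruence computation (a cube root of $3$ appears in the CM values whose reductions are being compared), not as an abstract guarantee of non-vanishing of a theta-sum attached to some well-chosen $F$. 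The explicit Gross--Zagier formula is then used in the direction opposite to yours: nontriviality of the point, established arithmetically, yields non-vanishing of the central derivative, rather than the converse. To repair your proposal you would need either an unconditional proof that $L'(E_p,1)\neq 0$, or a replacement of the analytic non-vanishing step by a congruence argument of the above type.
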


However, nearly 70 years have passed and there is no general result about the $8$ case of Conjecture \ref{conj} so far. It is believed that the $8$ case is decidedly much more difficult.
In this paper, we prove the following theorem which is the first general result in this case and also illustrates its complexity. 

\begin{thm}\label{main}
Let $p\equiv 8\mod 9$ be a prime such that the equation 
\[x^9-24x^6+ 3x^3+1-9(\sqrt[3]{3}-1)x^2(x^3+1)^2=0\]
does not have solutions in $\BF_p$, then at least one of $p$ and $p^2$ is the sum of two rational cubes. 
\end{thm}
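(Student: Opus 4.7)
The overall approach is to construct an explicit nontorsion rational point on either $E_p$ or $E_{p^2}$ by a Heegner-type construction, with the degree-$9$ polynomial in the statement serving as a mod-$p$ criterion for the nontriviality of the constructed point.

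\emph{Step 1 (Descent and root number).} For $p\equiv 8\pmod 9$, Selmer's bound quoted above gives $\dim_{\BF_3} S^{(\phi)}(E_p/\BQ)\le 1$ and $\epsilon(E_p)=-1$, with parallel statements on the 3-isogenous side corresponding to $E_{p^2}$. Because $E_p(\BQ)_{\tor}=0$ for $p>2$, producing a single rational point of infinite order on $E_p$ or on $E_{p^2}$ will suffice to prove the theorem. The $\phi$- and $\widehat\phi$-descent sequences will also dictate which of the two curves the constructed point lands on.

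\emph{Step 2 (Heegner point on a Shimura curve).} The prime $p\equiv 2\pmod 3$ is inert in $K=\BQ(\sqrt{-3})$, so the classical Heegner-point argument on $X_0(N)$ that Dasgupta-Voight used for $p\equiv 4,7\pmod 9$ is unavailable here; this is precisely what makes the $8$ case harder. The plan is to replace $K$ by an auxiliary imaginary quadratic field $F$ in which $p$ splits and the generalized Heegner hypothesis is satisfied, construct a CM point $P$ on a suitable Shimura curve (or modular curve) parametrizing abelian varieties with $\CO_K$-multiplication, and take the trace of the image of $P$ under the modular parametrization along an appropriate Galois orbit to obtain a candidate point $y$ in $E_p(\BQ)$ or $E_{p^2}(\BQ)$. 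A Gross-Zagier-type or Waldspurger-type formula adapted to the cubic-twist setting then relates an invariant of $y$ (height or Kummer coordinate) to a central $L$-value, so that nontriviality of $y$ is equivalent to a local nonvanishing at $p$.

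\emph{Step 3 (Nontriviality via the polynomial).} Since $p\equiv 2\pmod 3$, every element of $\BF_p^\times$ is a cube, so $\sqrt[3]{3}$ has a unique lift in $\BF_p$ and the polynomial $x^9-24x^6+3x^3+1-9(\sqrt[3]{3}-1)x^2(x^3+1)^2$ is an honest polynomial over $\BF_p$. Compute, in closed form, the reduction modulo a prime above $p$ of the Kummer coordinate (or a relevant invariant) of the trace point $y$; the content of the computation is to identify its zero locus over $\BF_p$ with the $\BF_p$-points of the variety cut out by this polynomial. The hypothesis that the polynomial has no $\BF_p$-root then forces the reduction of $y$ to be nonzero, so $y$ is of infinite order in the Mordell-Weil group of $E_p$ or $E_{p^2}$.

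\emph{Main obstacle.} The principal difficulty is Step 2 together with the explicit reduction in Step 3: one must set up the auxiliary field $F$, the Shimura curve, and the level structure so that the Galois trace of the CM point lands in $E_p(\BQ)$ or $E_{p^2}(\BQ)$ despite $p$ being inert in $K$, and then compute the reduction mod $p$ of the resulting coordinate explicitly enough to match it with the exact degree-$9$ polynomial in the statement. The appearance of $\sqrt[3]{3}$ in that polynomial presumably reflects a descent from $K(\sqrt[3]{3})$ down to $\BQ$ in the construction, and pinning down the precise algebraic form of the reduction is the technical heart of the argument that makes the $8$ case noticeably more intricate than the $4,7$ case.
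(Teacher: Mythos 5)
There are two genuine gaps, and together they mean the proposal would not close.

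First, Step 2 takes a wrong turn. You propose to replace $K=\BQ(\sqrt{-3})$ by an auxiliary imaginary quadratic field $F$ in which $p$ splits and to work on a Shimura curve. But for the cubic-twist family the CM point must have CM by an order of $K$ itself: the paper takes $\tau_0=p\omega/(9\sqrt{-3})$ on a quotient $X_\Gamma$ of $X_0(243)$ (isomorphic to $E^3$, Cremona 243b1), uses Shimura reciprocity to show the point lives in $E^3(H_p(\sqrt[3]{3}))$, and exploits $K(\sqrt[3]{p})\subset H_p$ so that the trace down to $K(\sqrt[3]{3},\sqrt[3]{p})$ decomposes, via the $\chi_{3p^2}$- and $\chi_{3p}$-eigencomponents, into points of $E^p(K)$ and $E^{p^2}(K)$ with $z_1+z_2=[3]z$. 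An auxiliary field $F$ would sever this link to the cubic twists, and there would be no reason for $\sqrt[3]{3}$ or the stated degree-$9$ polynomial to appear. The inertness of $p$ in $K$ is not an obstruction to constructing the CM point; it is only an obstruction to proving nontriviality by reduction modulo $p$ alone, which is where the real difficulty sits.

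Second, and more seriously, Step 3 is missing half of the nontriviality argument. The torsion of $E^1$ over the relevant field is exactly $E^1[3]$ (nine points). The polynomial $D(x)$ encodes only the condition that the mod-$p$ reduction of the trace point avoids the three $\sqrt{-3}$-torsion translates $O,\pm(0,-12)$ shifted by $[\frac{p+1}{9}]S$; it does not exclude the other six $3$-torsion points, so the mod-$p$ computation you describe cannot by itself force the point to be nontorsion under the stated hypothesis. The paper's key new input is at the prime $3$: using the Gross--Zagier/Lauter--Viray theory of singular moduli it proves $j(27\tau_0)\equiv j(\sqrt{-3})\bmod 3^5$, deduces that $\phi(P_{\tau_0})-S$ is nonsingular modulo every prime above $3$, and hence that if $\phi(z)$ were torsion it could only be one of the three $3$-torsion points on the singular component of $[\frac{p+1}{9}]S$ --- precisely the three that the mod-$p$ condition $D(x)$ rules out. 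Without the mod-$3$ singular-component analysis, you would need a strictly stronger mod-$p$ hypothesis (ruling out all nine torsion points), which is not the theorem as stated. Your Step 1 (Selmer bounds, root numbers) and the appeal to a Gross--Zagier formula for nontriviality are also not how the paper argues: the formulae appear only as a supplement, since nonvanishing of the relevant $L'$ is not known unconditionally.
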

The method to prove Theorem \ref{main} is traditional by constructing certain Heegner points and then proving their nontriviality. Our proof is largely inspired by Dasgupta and Voight's proof of Theorem \ref{47}. However the situation is very different here. In the $p\equiv 4,7\mod 9$ case, $p$ splits into two primes $\fp$ and $\bar{\fp}$ in the CM field $K=\BQ(\sqrt{-3})$. Dasgupta and Voight distinguished the Heegner points from the torsion points by comparing their coordinates modulo the different primes $\fp$ and $\bar{\fp}$. In the $p\equiv 8\mod 9$ case, $p$ is inert in $K$ and the single coordinates modulo $p$ is not enough to characterize the nontrivility of the Heegner points. However, for the cubic twist family elliptic curves $E_n$, $3$ is another special prime we can use. The elliptic curve $E_n$ has bad reduction at $3$ which is a terrible feature usually, but a good feature in our case. Moreover, It is quite surprising that Gross and Zagier's theory of singular moduli plays a key role in the investigation at prime $3$.
At last, we can prove the nontriviality of the Heegner points by combining both the information modulo $p$ and $3$.

Although Theorem \ref{main} cannot tell which of $p$ and $p^2$ is a cube sum, we will give two explicit Gross-Zagier formulae relating
the central derivatives of $L$-functions and the heights of our Heegner points in Theorem \ref{thm:GZ} below. This can conjecturally tell us whose rational points of $E_p$ and $E_{p^2}$ can be generated by our Heegner points. For more details, see section \ref{app}.

The paper is organized as follows. In section 2, we provide the results about field extensions we need. In section 3, we construct our Heegner points and study the Galois action on them. In section 4, we prove a congruence relation of singular modulus. In section 5, we study the reduction of the Heegner points modulo the primes above $3$. In section 6, we prove our main theorem. In section 7, we give the explicit Gross-Zagier formulae. In section 8, we compute the example of the prime 17 to illustrate our results.

\subsection*{Acknowledgments} The author would like to thank John Voight for many useful communications. He also wants to thank Benedict Gross, Don Zagier, Yingkun Li, Tonghai Yang, Bianca Viray for many useful discussions on singular moduli. He also thanks Jianing Li for the discussions on the density of primes satisfying our conditions. One proof of a key identity in this paper is communicated to the author by `GH from MO' through mathoverflow, thank his help and the good website. Many of the computations in the research are done on the Sagemath system \cite{Sage}, thanks are also given to the Sagemath team. Finally, thank referee for carefully reading the original paper and many valuable suggestions.

Part of this work is finished during the author's one year stay (2019-2020) in Max-Planck Institute for Mathematics, Bonn. He is grateful to its hospitality and financial support.

\section{Ring class fields }
For any integer $c\geq 1$, let $\CO_c$ be the order of $K$ of conductor $c$ and let $H_c$ be the ring class field of conductor $c$, i.e., $\Gal(H_c/K)\simeq\wh{K}^\times/ K^\times\wh{\CO_c}^\times$ where $\wh{K}=K\otimes \wh{\BZ}$ is the finite adele of $K$ and $\wh{\CO_c}=\CO_c\otimes\wh{\BZ}$. Let $\sigma: \wh{K}^\times\ra \Gal(K^\ab/K)$ be the Artin reciprocity law and we denote by $\sigma_t$ the image of $t\in\wh{K}^\times$. In the rest of the paper, $\omega=(-1+\sqrt{-3})/2$ is a third root of unity and $p$ will always be a prime congruent to $8$ modulo $9$.  For an element $\alpha$ of $K$, we will use $\alpha_v$ to denote the embedding of $\alpha$ into $\wh{K}^\times$ with the $v$-place $\alpha$ and all other places $1$ (but we will use $\alpha_3$ instead of $\alpha_{\sqrt{-3}}$ below for simplicity, e.g., we write $\omega_3$ for $\omega_{\sqrt{-3}}$).
We have the following field extension results. 

\begin{prop}\label{LCF}Let $p\equiv 8\mod 9$ be a prime.
\begin{itemize}
\item[1.] The field $H_{9p}=H_{3p}(\sqrt[3]{3})$ has Galois group 
$$\Gal(H_{9p}/H_{3p})\simeq \langle \sigma_{1+3\omega_3}\rangle\simeq \BZ/3\BZ,$$
Moreover,
\[\left(\sqrt[3]{3}\right)^{\sigma_{1+3\omega_3}-1}=\omega^2.\]
\item[2.] $K(\sqrt[3]{p})\subset H_p\subset H_{3p}$ and 
$$\Gal(H_{3p}/H_p)=\langle\sigma_{\omega_3}\rangle\simeq\BZ/3\BZ,\ \ \ \Gal(H_p/K)=\pair{\sigma_{x_p}}\simeq \BZ/((p+1)/3)\BZ,$$
where $x_p$ is the generator of $\CO_{K,p}^\times/\BZ_p^\times(1+p\CO_{K,p})\CO_K^\times$. Hence $p$ is totally ramified in $H_p/K$.
\item[3.] 
Let $\sigma_{\sqrt{-3}}\in\Gal(H_p/K)$ be the Frobenius corresponding to $\sqrt{-3}$ and $F=H_p^{\sigma_{\sqrt{-3}}}$, then $[H_p:F]=2$ and $\sqrt{-3}$ is totally split in $F/K$ and inert in $H_p/F$.
\item[4.] Assume $3^k\mid p^2-1$, then $\zeta_{3^k}\in \CO_{K,p}^\times$ where $\zeta_{3^k}$ is a primitive $3^k$-th root of unity. 
\end{itemize}
\end{prop}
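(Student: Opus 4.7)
The plan is to handle each of the four assertions by standard class field theory for $K=\BQ(\omega)$, using that $h_K=1$, $\sqrt{-3}$ is ramified, and $p\equiv 2\pmod 3$ is inert. I would begin by computing all relevant degrees from the ring class number formula $[H_c:K]=(c/[\CO_K^\times:\CO_c^\times])\prod_{\ell\mid c}(1-\chi_K(\ell)/\ell)$. Since $\omega\notin\CO_c$ for every $c>1$ in question, $[\CO_K^\times:\CO_c^\times]=3$; combined with $\chi_K(3)=0$ and $\chi_K(p)=-1$, this gives $[H_9:K]=3$, $[H_p:K]=(p+1)/3$, $[H_{3p}:K]=p+1$, $[H_{9p}:K]=3(p+1)$, and hence $[H_{3p}:H_p]=[H_{9p}:H_{3p}]=3$.

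For part (1), first identify $H_9=K(\sqrt[3]{3})$: the extension $K(\sqrt[3]{3})/K$ is cyclic cubic, unramified outside $\mathfrak{p}=(\sqrt{-3})$ with $x^3-3$ Eisenstein at $\mathfrak{p}$, and a conductor-discriminant computation gives conductor $\mathfrak{p}^4=9\CO_K$, forcing $K(\sqrt[3]{3})\subset H_9$ and, by matching degrees, equality. The place-by-place identity $\widehat{\CO_{3p}}\cap\widehat{\CO_9}=\widehat{\CO_{9p}}$ yields $H_{9p}=H_{3p}\cdot H_9=H_{3p}(\sqrt[3]{3})$ and $H_9\cap H_{3p}=K$, so restriction is an isomorphism $\Gal(H_{9p}/H_{3p})\simeq\Gal(H_9/K)$. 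Local reciprocity at $\mathfrak{p}$ realises $\Gal(H_{9p}/H_{3p})\simeq(\BZ_3+3\CO_{K,3})^\times/(\BZ_3+9\CO_{K,3})^\times\simeq\CO_{K,3}/(3\CO_{K,3}+\BZ_3)\simeq\BF_3$, generated by the class of $1+3\omega$. For the explicit formula $(\sqrt[3]{3})^{\sigma_{1+3\omega_3}-1}=\omega^2$, the Galois action is governed by the local cubic Hilbert symbol $(1+3\omega,3)_\mathfrak{p}$: writing $3=-\pi^2$ with $\pi=\sqrt{-3}$ and using that $-1$ is a cube in $K_\mathfrak{p}$ reduces this to $(1+3\omega,\pi)_\mathfrak{p}^{-1}$, which I would compute via an explicit wild Hilbert symbol formula (e.g.\ Artin--Hasse or Iwasawa) in the ramified local field $\BQ_3(\sqrt{-3})$.

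For part (2), local class field theory at $p$ gives $\Gal(H_p/K)\simeq\CO_{K,p}^\times/(\BZ_p^\times(1+p\CO_{K,p})\CO_K^\times)\simeq(\BF_{p^2}^\times/\BF_p^\times)/\langle\bar\omega\rangle$, cyclic of order $(p+1)/3$ with generator $x_p$; total ramification of $p$ follows since the entire Galois group comes from inertia at $p$. For $K(\sqrt[3]{p})\subset H_p$, the extension $K(\sqrt[3]{p})/K$ is Eisenstein and tamely totally ramified at $p$ and unramified elsewhere, and its local norm group at $p$ equals $p^\BZ\cdot\mu_{p^2-1}^3\cdot U^{(1)}$; this contains $\BZ_p^\times(1+p\CO_{K,p})$ because $\mu_{p-1}\subset\mu_{p^2-1}^3$ (valid as $p-1\mid (p^2-1)/3$), placing $K(\sqrt[3]{p})$ inside $H_p$ as its unique cubic subextension. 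For part (3), since $\mathfrak{q}^2=(3)$ is generated by $3\in\CO_p$, the order of $\mathfrak{q}\cap\CO_p$ in $\Pic(\CO_p)$ divides $2$; if it were $1$, some $\alpha=a+pb\omega\in\CO_p$ would generate $\mathfrak{q}=\sqrt{-3}\CO_K$, forcing $\alpha=(1+2\omega)u$ for $u\in\CO_K^\times$, whose $\omega$-coefficient is always in $\{\pm 1,\pm 2\}$, incompatible with $p\mid b$ for $p>3$; hence $\sigma_{\sqrt{-3}}$ has order exactly $2$, $[H_p:F]=2$, and $\sqrt{-3}$ is totally split in $F/K$ and inert in $H_p/F$. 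Part (4) is immediate: $\CO_{K,p}^\times\supset\mu_{p^2-1}$ (the Teichm\"uller lift of $\BF_{p^2}^\times$), which contains $\mu_{3^k}$ whenever $3^k\mid p^2-1$.

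The main obstacle is the wild cubic Hilbert symbol computation in~(1): obtaining the precise value $\omega^2$ (rather than $\omega$) at the ramified prime $\sqrt{-3}$ requires careful handling of the non-tame Kummer theory over $\BQ_3(\sqrt{-3})$, which is the most delicate aspect of the proof.
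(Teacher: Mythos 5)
Your overall class-field-theoretic framework matches the paper's, and your ideal-class argument for part (3) (showing that the class of $\mathfrak{q}\cap\CO_p$ in $\Pic(\CO_p)$ has order exactly $2$ by inspecting the $\omega$-coefficients of $(1+2\omega)u$ for $u\in\CO_K^\times$) is a correct and pleasantly elementary alternative to the paper's route via the local Artin map and the commutative diagram comparing $\Gal(H_{p,\mu}/K_3)$ with $\wh{K}^\times/K^\times\wh{\CO_p}^\times$. Part (4) is fine. However, there are two genuine gaps.

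First, in part (1) you never actually compute the value $\omega^2$: you reduce $(\sqrt[3]{3})^{\sigma_{1+3\omega_3}-1}$ to the wild cubic Hilbert symbol $(1+3\omega,\pi)_{\mathfrak{p}}$ in the ramified field $\BQ_3(\sqrt{-3})$ and then defer to an unspecified Artin--Hasse/Iwasawa formula, flagging this yourself as the delicate point. That computation is the entire content of the assertion --- the precise root of unity is what is needed later to descend $x(\tau_0)/\sqrt[3]{9}$ to $H_p$ --- so leaving it unexecuted is a real gap, and getting $\omega^2$ rather than $\omega$ is exactly the kind of normalization issue a hand-wave will not settle. The paper sidesteps the wild symbol entirely: since $1+3\omega$ is a global element of norm $7$, the product formula for the Hilbert symbol gives $(1+3\omega,3)_3=(1+3\omega,3)_v^{-1}$, where $v$ is the prime $(1+3\omega)$ above $7$, and at that tame place the symbol is just $3^{-2}\bmod (1+3\omega)=\omega^2$. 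You should either adopt such a global reduction to a tame place or genuinely carry out the wild computation.

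Second, in part (2) you assert that $K(\sqrt[3]{p})/K$ is ``unramified elsewhere,'' i.e.\ at the prime $(\sqrt{-3})$, with no justification; your norm-group argument only treats the place above $p$. Unramifiedness at $3$ is precisely where the hypothesis $p\equiv 8\pmod 9$ enters (for $p\equiv 2,5\pmod 9$ the extension $K(\sqrt[3]{p})/K$ is ramified at $3$ and is \emph{not} contained in $H_p$), so this step cannot be omitted. The paper verifies it by using $1+9\CO_{K,3}\subset (K_3^\times)^3$ to reduce to the generators $\omega_3$ and $1+3\omega_3$ and computing $(\omega_3,p)_3=(\omega_3,8)_3=1$ and similarly for $1+3\omega_3$. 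Your degree bookkeeping via the ring class number formula and the rest of the argument are fine.
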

\begin{proof}
We prove the results in 1,2,3,4 
successively and separately.
\begin{enumerate}
\item The Galois group
\[\Gal(H_{9p}/H_{3p})\simeq K^\times \wh{\CO_{3p}}^\times/K^\times\wh{\CO_{9p}}^\times\]
is cyclic of order $3$ and generated by $1+3\omega_3$.
The ideal $7\CO_K=(1+3\omega)(1+3\omega^2)$ and let $v$ be the place corresponding to the prime ideal $(1+3\omega)$. Then by the local-global principle, we have
\[\left(\sqrt[3]{3}\right)^{\sigma_{1+3\omega_3}-1}=(1+3\omega_3, 3)_3=(1+3\omega_v, 3)_v^{-1}=3^{-2}\mod (1+3\omega)=\omega^2,\]
where $(\ ,\ )_w$ denotes the $3$-rd Hilbert symbol over $K_w$. This finishes the proof of assertions in 1.

\item The Galois group
\[\Gal(H_{3p}/H_{p})\simeq K^\times \wh{\CO_{p}}^\times/K^\times\wh{\CO_{3p}}^\times\]
is cyclic of order $3$ and generated by $\omega_3$. And
\[\Gal(H_{p}/K)\simeq \wh{K}^\times/ K^\times\wh{\CO_{p}}^\times\simeq \CO_{K,p}^\times/\BZ_p^\times(1+p\CO_{K,p})\CO_K^\times\]
is cyclic of order $(p+1)/3$. Since $K(\sqrt[3]{p})$ is dihedral over $\BQ$, by \cite[Theorem 9.18]{Cox89}, $K(\sqrt[3]{p})$ is contained in a ring class field of $K$. To prove that $K(\sqrt[3]{p})\subset H_p$, it is enough to prove that 
$$(1+p\CO_{K,p})\prod_{v\nmid p}\CO_{K,v}^\times$$ 
fixes $\sqrt[3]{p}$ under the Artin map. Since $K(\sqrt[3]{p})/K$ is unramified outside $3p$, $\prod_{v\nmid 3p}\CO_{K,v}^\times$  fixes $\sqrt[3]{p}$. Using the Hilbert symbol, it is clear that $(1+p\CO_{K,p})$ fixes $\sqrt[3]{p}$. Finally, we look at the $3$-adic place. Let $\CO_{K,3}$ be the completion of $\CO_K$ at the unique place above $3$. Since $1+9\CO_{K,3}\subset (K^\times_3)^3$ and
\[ \CO_{K,3}^\times/\BZ_3^\times(1+9\CO_{K,3})=\langle \omega_3\rangle^{\BZ/3\BZ}\times\langle1+3\omega_3\rangle^{\BZ/3\BZ},\] 
it suffices to prove $\omega_3$ and $1+3\omega_3$ fix $\sqrt[3]{p}$.
But $p\equiv 8\mod 9$, we have
\[(\sqrt[3]{p})^{\sigma_{\omega_3}-1}=(\omega_3,p)_3=(\omega_3,8)_3=1.\]
The same is true for $\sigma_{1+3\omega_3}$. The last sentence in 2 follows from the fact that $\Gal(H_p/K)$ is generated by units. This finishes the proof of assertions in 2.

\item Let $\mu$ be a place of $H_{p}$ above $3$. Since $H_p/K$ is only ramified at $p$, we know that $\sqrt{-3}$ is unramified and 
$\Gal(H_{p,\mu}/K_3)\simeq \Gal(k_\mu/\BF_{3})$ is generated by the Frobenius $\sigma_{\sqrt{-3}}$ under the local Artin map.
We have the following commutative diagram
\[\xymatrix{\Gal(H_{p,\mu}/K_3)\ar@{^{(}->}[d]\ar[r]^{\simeq}& K_3^\times / \mathrm{Norm} (H_{p,\mu}^\times)\ar@{^{(}->}[d]\\\Gal(H_{p}/K)\ar[r]^{\simeq\quad }& \wh{K}^\times / K^\times \wh{\CO_{p}}^\times.}\]
But $\sqrt{-3}$ is of order $2$ in $\wh{K}^\times / K^\times \wh{\CO_{p}}^\times\simeq  \CO_{K,p}^\times/\CO_{K}^\times\BZ_p^\times(1+p\CO_{K,p})$. We see $[H_{p,\mu}:K_3]=2$ and the assertions in 3 of the proposition is clear.

\item The order of $\BF_{p^2}^\times$ is divided by $3^k$. Then $x^{3^k}-1$ splits completely in $\BF_{p^2}$. By the Hensel Lemma, $x^{3^k}-1$ splits completely in $K_p$.  The action of $\sigma_{\zeta_{3^k,p}}$ on $\sqrt[3]{p}$ is by the formula of Hilbert symbol.
\end{enumerate}
\end{proof}

In summary, we have the following extension diagram.

\[\xymatrix{&H_{9p}=H_{3p}(\sqrt[3]{3})\ar@{-}[dl]^{3}_{\pair{\sigma_{1+3\omega_3}}}\ar@{-}[d]\ar@{-}[dr]&\\
            H_{3p}\ar@{-}[d]_{3}^{\pair{\sigma_{\omega_3}}}&H_{p}(\sqrt[3]{3})\ar@{-}[dd]\ar@{-}[dl]&H_9\ar@{=}[ddd]\\
            H_{p}\ar@{-}[dd]_{(p+1)/9}&&\\
            &L_{(3,p)}=K(\sqrt[3]{3},\sqrt[3]{p})\ar@{-}[dr]^{3}\ar@{-}[d]^{3}\ar@{-}[dl]_{3}&\\
            L_{(p)}=K(\sqrt[3]{p})\ar@{-}[dr]^{3}&L_{(9p)}=K(\sqrt[3]{9p})\ar@{-}[d]^{3}&L_{(3)}=K(\sqrt[3]{3})\ar@{-}[dl]_{3}\\
            &K\ar@{-}[d]^{2}&\\
            &\BQ&\\
            }\]

Finally, we give a lemma which will be used later.

\begin{lem}\label{prime}
Let $p\equiv 2$ mod $3$ be a prime, then $x^3\equiv 3\mod p$ always has one solution.
\end{lem}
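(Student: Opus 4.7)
The plan is to observe that the cubing map on $(\BZ/p\BZ)^\times$ is a bijection whenever $p \equiv 2 \mod 3$, and then conclude that every residue class (in particular, the class of $3$) has a unique cube root.

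More concretely, since $p \equiv 2 \mod 3$, the group order $p-1$ satisfies $p-1 \equiv 1 \mod 3$, so $\gcd(3, p-1) = 1$. Therefore the endomorphism $x \mapsto x^3$ of the cyclic group $(\BZ/p\BZ)^\times$ is an automorphism (raising to a power coprime to the group order is invertible). In particular, the image of cubing is all of $(\BZ/p\BZ)^\times$, so there exists $x \in (\BZ/p\BZ)^\times$ with $x^3 \equiv 3 \mod p$, and since the map is injective this solution is unique. Note the hypothesis $p \equiv 2 \mod 3$ rules out $p = 3$, so $3 \not\equiv 0 \mod p$ and the argument makes sense.

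I do not anticipate a genuine obstacle here; this is a one-line consequence of the structure of the multiplicative group modulo $p$. The only small subtlety is confirming that $p = 3$ is excluded by the hypothesis (it is), so that $3$ represents a nonzero class in $(\BZ/p\BZ)^\times$ and the bijectivity of cubing actually applies to it.
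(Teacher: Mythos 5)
Your proof is correct and rests on the same underlying fact as the paper's, namely that $\gcd(3,p-1)=1$ makes cubing an automorphism of $(\BZ/p\BZ)^\times$; the paper merely phrases the existence half constructively, exhibiting the explicit cube root $3^{(2-p)/3}$ via Fermat's little theorem, and then proves uniqueness exactly as you do.
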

\begin{proof}
By Fermat's little theorem, $3\equiv 3^{2-p}=(3^{\frac{2-p}{3}})^3 \mod p$, so $x^3\equiv 3\mod p$ always has solutions. If $x_1$ and $x_2$ are both solutions, then $\lrb{\frac{x_1}{x_2}}^3\equiv 1\mod p$. But $\BF^\times_{p}$ is of order $p-1$ prime to $3$, so $\frac{x_1}{x_2}=1$, i.e., $x_1=x_2$.
\end{proof}


\section{Construction of Heegner points}
For convenience, we will use the adelic formulation in \cite{Tian}\cite{CST17}. For $X$ an algebraic curve defined over $\BQ$ and  $F$  a field extension of $\BQ$, we denote by $\Aut_F(X)$ the group of algebraic automorphisms of $X$ which are defined over $F$.
Let $$\CH=\{z\in \BC|\, \Im(z)>0\}$$ be the Poincare upper half plane and usually we will use $\tau$ to denote the points on $\CH$. The group $\GL_2(\BQ)^+$ acts on $\CH$ by linear fractional transformations.

Let $U_0(243)$ be the open compact subgroup of $\GL_2(\widehat{\BZ})$ consisting of matrices
$\left (
\begin{array}{cc}
a&b\\c&d
\end{array}
\right )$ such that $c\equiv 0\mod 243$, and let
$\Gamma_0(243)=\GL_2(\BQ)^+\cap U_0(243)$. Let $X_0(243)$ be the
modular curve over $\BQ$ of level $\Gamma_0(243)$ whose underlying Riemann surface is
\[X_0(243)(\BC)=\GL_2(\BQ)^+\backslash\left (\CH\bigsqcup\BP^1(\BQ)\right )\times \GL_2(\BA_f)/U_0(243)\simeq \Gamma_0(243)\backslash \CH\bigsqcup \Gamma_0(243)\backslash \BP^1(\BQ).\]
It has genus $19$. Define $N$ to be the normalizer of $\Gamma_0(243)$ in $\GL_2^+(\BQ)$.
It follows from \cite[Theorem 1]{KM1988} that the linear fractional action of $N$ on $X_0(243)$ induces an isomorphism
\[N/\BQ^\times\Gamma_0(243)\simeq \Aut_{\ov{\BQ}}(X_0(243)).\]
Moreover, all the algebraic automorphisms in $\Aut_{\ov{\BQ}}(X_0(243))$ are defined over $K$.
By \cite[Theorem 8]{AL1970}, the quotient group $N/\BQ^\times\Gamma_0(243)\simeq S_3\rtimes \BZ/3\BZ$, where $S_3$ denotes the symmetric group with $3$ letters which is generated by the Atkin-Lehner operator $W=\begin{pmatrix}0&1\\-243&0\end{pmatrix}$ and the matrix $A=\begin{pmatrix}28&1/3\\ 81&1\end{pmatrix}$, and the subgroup $\BZ/3\BZ$ is generated by the matrix $B=\begin{pmatrix}1&0\\81&1\end{pmatrix}$. Let $C=\begin{pmatrix}1&1/9\\-27&-2\end{pmatrix}$, then $C$ normalizes $\pair{\Gamma_0(243),A}$.

Put
\[U=\langle U_0(243),C,A\rangle\subset \GL_2(\BA_f).\]
Then $\BQ^\times\backslash \BQ^\times U$ is an open compact subgroup of $\BQ^\times\backslash \GL_2(\BA_f)$.
Put $$\Gamma =\GL_2(\BQ)^+\cap U=\langle \Gamma_0(243),C,A\rangle,$$ and let $X_\Gamma$ be the
modular curve over $\BQ$ of level $\Gamma$ whose underlying Riemann surface is
\[X_\Gamma(\BC)=\GL_2(\BQ)^+\backslash\left (\CH\bigsqcup\BP^1(\BQ)\right )\times \GL_2(\BA_f)/U\simeq \Gamma \backslash \CH\bigsqcup \Gamma\backslash \BP^1(\BQ).\]
It was communicated to the author by John Voight that the space of holomorphic modular symbols for $\Gamma_0(243)$ is of dimension $28$ and the fixed subspace of $A$ and $C$ is two dimensional. One of the forms is an Eisenstein series and the other is the form for the isogeny class of elliptic curves 243b in Cremona Label. After integrating this, we get the optimal curve 243b1. So $X_\Gamma$ is a smooth projective curve over $\BQ$ isomorphic to the elliptic curve 243b1 in Cremona Label over $\BC$. Moreover, $X_\Gamma$ has two cusps
\[\Gamma\backslash \BP^1(\BQ)=\{[\infty],[0]\},\] which are all rational over $\BQ$. This gives $X_\Gamma$ the structure of an elliptic curve over $\BQ$. 
Let $N_\Gamma$ be the normalizer of $\Gamma$ in $\GL_2(\BQ)^+$. We have a natural embedding
\begin{equation}\label{Phi}\Phi: N_\Gamma/ \BQ^\times\Gamma\hookrightarrow \Aut_{\ov{\BQ}}(X_\Gamma)\simeq \CO_K^\times\ltimes X_\Gamma(\ov{\BQ}),\end{equation}
where $\CO_K^\times $ embeds into $\Aut_{\ov{\BQ}}(X_\Gamma)$ by complex multiplications and $X_\Gamma(\ov{\BQ})$ embeds into $\Aut_{\ov{\BQ}}(X_\Gamma)$ by translations.

The matrices
\[B=\begin{pmatrix}1&0\\81&1\end{pmatrix}\quad \text{and}\quad W=\begin{pmatrix}0&1\\-243&0\end{pmatrix}\]
lie in $N_\Gamma$, and hence induce automorphisms $\Phi(B)$ and $\Phi(W)$ of $X_\Gamma$. The matrix $W$ also normalizes $U_0(243)$, so for any $P=[\tau,g]\in X_\Gamma$ with $g\in \GL_2(\BA_f)$, $\Phi(W)(P)=[\tau, gW^{-1}]$ which is defined over $\BQ$ by functoriality of the canonical models of Shimura variety. However $B$ does not normalize $U_0(243)$. But for $P=[\tau,\gamma]\in X_\Gamma$ with $\gamma\in \GL_2(\BQ)^+$, the action of $\Phi(B)$ can still be written as $\Phi(B)(P)=[\tau,\gamma B^{-1}]$. But this time we cannot conclude it is defined over $\BQ$. In fact, it is defined over $K$. 

For any $n$, $E_n$ has the Weierstrass equation $y^2=x^3-432n^2$ and is 3-isogenous to $E^n:y^2=x^3+16n^2$, see \cite[P123]{Stephens}. In particular, $E_3$ is 3-isogenous to the optimal elliptic curve $E^3$ which is of conductor $243$ and modular degree $9$, i.e., Cremona Label 243b1. From this we can see that $X_\Gamma\cong E^3$.
We have the explicit modular parametrization
\begin{equation}\label{modular}
\psi:X_0(243)\lra E^3, \ \ \ \ \tau \mapsto \left(x(\tau), y(\tau)=-\frac{8\eta(9\tau)^4}{\eta(27\tau)^4}-12\right).
\end{equation}
To see this, the eta-quotient in the formula is a modular function on $X_0(243)$ by the Ligozat criterion \cite{Ligozat} and invariant under the action of $A$ and $C$ by the transformation properties of the eta functions.
Then we can get the result by comparing the $q$-expansion of $y(\tau)$ with the modular parametrization up to the Hecke bound using Sagemath.
\begin{prop}\label{action}
\begin{itemize}
\item[(1).] Through the modular parametrization $\psi$,  $X_\Gamma$ can be identified with $E^3$ such that $[\infty]=O$ and the cusp $[0]$ has coordinates $(0,-12)$.
\item[(2).] The embedding in (\ref{Phi}) in fact induces the following embedding
\[\Phi:N_\Gamma/ \BQ^\times\Gamma\hookrightarrow\CO_K^\times\ltimes \Gamma\backslash \BP^1(\BQ).\]
Moreover,  for any point on $P\in X_\Gamma$, we have
\[\Phi(B)(P)=[\omega^2]P,\quad \Phi(W)(P)=[-1]P+[0].\]
In particular, the automorphisms $\Phi(B)$ is defined over $K$.
\end{itemize}
\end{prop}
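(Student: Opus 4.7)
My plan for part (1) is to descend $\psi$ through the quotient, verify it induces an isomorphism, and compute the images of the two cusps. The eta-quotient expression for $y(\tau)$ is invariant under $A$ and $C$ by the transformation laws for $\eta$ and the Ligozat criterion, so $\psi$ factors through a morphism $\bar\psi:X_\Gamma\to E^3$. Since the modular parametrization $X_0(243)\to E^3$ has degree $9$ (the modular degree of 243b1) and $[\Gamma:\Gamma_0(243)]=9$ (verified by computing the order of the image of $\langle A,C\rangle$ in $N/\BQ^\times\Gamma_0(243)\simeq S_3\rtimes\BZ/3\BZ$), $\bar\psi$ has degree one and is an isomorphism. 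To locate the cusps: $y(\tau)\sim-8q^{-3}$ as $\tau\to i\infty$, so $[\infty]\mapsto O$. For the other cusp $[0]=W\cdot[\infty]$, applying $\eta(-1/\tau)=\sqrt{-i\tau}\,\eta(\tau)$ yields
\[\frac{\eta(9W\tau)^4}{\eta(27W\tau)^4}=9\cdot\frac{\eta(27\tau)^4}{\eta(9\tau)^4}=9q^3(1+O(q)),\]
so $y(W\tau)\to-12$ as $\tau\to i\infty$. Since $E^3:y^2=x^3+144$, $y=-12$ forces $x=0$, giving $[0]=(0,-12)$.

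For part (2), I would first reduce to checking the explicit formulas for $\Phi(W)$ and $\Phi(B)$: indeed, $N_\Gamma/\BQ^\times\Gamma$ is generated by the classes of $W$ and $B$ (starting from the quoted isomorphism $N/\BQ^\times\Gamma_0(243)\simeq S_3\rtimes\BZ/3\BZ=\langle W,A\rangle\rtimes\langle B\rangle$ and killing the classes of $A,C\in\Gamma$, one is left with the abelian group $\langle[W]\rangle\times\langle[B]\rangle\simeq\BZ/2\BZ\times\BZ/3\BZ$), so once one checks $\Phi(W),\Phi(B)\in\CO_K^\times\ltimes\{[\infty],[0]\}$, the embedding follows. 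The key observation enabling this is that both $B$ and $W$, being in $N$, act on $\CH\cup\BP^1(\BQ)$ via linear fractional transformations preserving $\BP^1(\BQ)$, so $\Phi(B),\Phi(W)$ permute the cusp set $\{[\infty],[0]\}$.

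To obtain the explicit formulas: the matrix $W$ swaps $\infty$ and $0$ on $\CH$, so $\Phi(W)(O)=[0]$; the unique involution of an elliptic curve sending $O$ to a point $Q$ is $P\mapsto-P+Q$, hence $\Phi(W)(P)=[-1]P+[0]$, which is defined over $\BQ$ because both $[-1]$ and translation by the rational cusp $[0]$ are. For $B$, the class $[B]$ has order three, while any permutation of a two-element set of order dividing three must be trivial, so $\Phi(B)$ fixes both cusps, in particular $O$; hence $\Phi(B)=[\epsilon]$ for some $\epsilon\in\CO_K^\times$ of order three, $\epsilon\in\{\omega,\omega^2\}$. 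To identify $\epsilon=\omega^2$, I would compute the pullback of the weight-two differential $f(\tau)\,d\tau$ corresponding to $E^3$: the identity $B^*(f\,d\tau)=(81\tau+1)^{-2}f(B\tau)\,d\tau=\epsilon\cdot f(\tau)\,d\tau$ together with a finite Fourier coefficient comparison (or a numerical evaluation at a convenient CM point) determines $\epsilon$. The automorphism $[\omega^2]$ is defined over $K$ since the CM action of $\CO_K$ on $E^3$ is realized over $K$. This sign identification is the only non-formal step; all other assertions are forced by the cardinality of the cusp set and the orders of $[B]$ and $[W]$.
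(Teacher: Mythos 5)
Your argument follows the same skeleton as the paper's own (very terse) proof: part (1) is handled by evaluating $y(\tau)$ at the two cusps, and part (2) rests on the observation that any $M\in N_\Gamma$ acts by fractional linear transformations preserving $\BP^1(\BQ)$, so that in $\Phi(M)(P)=[\alpha]P+S$ the translation part $S=\Phi(M)([\infty])$ is a cusp; the formula for $W$ then falls out of its action on $\{[\infty],[0]\}$. Three remarks on where you diverge. First, your degree count for why $\bar\psi$ is an isomorphism, and your identification of $N_\Gamma/\BQ^\times\Gamma$ with $\BZ/2\BZ\times\BZ/3\BZ$, both implicitly treat $C$ as an element of $N$ and $N_\Gamma/\BQ^\times\Gamma$ as a quotient of $N/\BQ^\times\Gamma_0(243)$; the paper only asserts that $C$ normalizes $\langle\Gamma_0(243),A\rangle$, so these reductions are not justified as stated. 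Fortunately neither is needed: the identification $X_\Gamma\cong E^3$ is already established before the proposition via the modular-symbols computation, and the embedding statement follows from the cusp-preservation argument applied to an arbitrary $M\in N_\Gamma$, without knowing generators. Second, and more substantively, the paper obtains $\Phi(B)=[\omega^2]$ by citing \cite[Proposition 2.2.7]{DV17}, whereas you correctly narrow $\Phi(B)$ down to $[\omega]$ or $[\omega^2]$ by the order-three argument but then only describe, without executing, the Fourier-coefficient (or numerical) comparison that would distinguish the two; since the difference between $\omega$ and $\omega^2$ is actually used later (e.g.\ in the computation proving Theorem \ref{co:Galois}), this step must be carried out or a reference supplied, and note that computing the $q$-expansion of $f|_B$ for the lower-triangular matrix $B$ is itself not a purely formal manipulation. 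Third, a small precision: your claim that $P\mapsto -P+Q$ is the \emph{unique} involution sending $O$ to $Q$ requires that $Q$ not be $2$-torsion (otherwise translation by $Q$ is another such involution); this holds here because $(0,-12)$ is a $3$-torsion point on $y^2=x^3+144$, but it should be said, or alternatively one can pin down $\alpha=-1$ as the paper does by combining $\Phi(W)([0])=[\infty]$ with $\Phi(W)^2=\mathrm{id}$.
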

\begin{proof}For (1), we just evaluate $y(\tau)$ at the cusps $\infty$ and $0$. 
For (2), for any $M\in N_\Gamma$ and $P\in X_\Gamma$, $\Phi(M)(P)=[\alpha]P+S$, where $\alpha\in \CO_K^\times,S\in X_\Gamma(\ov{\BQ})$. Taking $P=[\infty]$, we see $S=\Phi(M)([\infty])\in \Gamma\backslash \BP^1(\BQ)$. The formula for $B$ is in \cite[Proposition 2.2.7]{DV17}. The formula for $W$ can be derived from its actions on $[\infty]$ and $[0]$ directly.  
\end{proof}

Let $p\equiv 8\mod 9$ be a prime. Let $\rho:K\rightarrow \M_2(\BQ)$ be the normalized embedding with fixed point $\tau_0=p\omega/(9\sqrt{-3})\in \CH$, i.e., we have
\[\rho(t)\begin{pmatrix}\tau_0\\1\end{pmatrix}=t\begin{pmatrix}\tau_0\\1\end{pmatrix},\quad \textrm{for any $t\in K$}.\]
Note that $$\tau_0=M\omega,\quad M=\matrixx{\frac{p}{9}}{0}{2}{1}.$$
Then the embedding $\rho:K\rightarrow\M_2(\BQ)$ is explicitly given by
\begin{equation}\label{emb1}
\rho(\omega)=M\matrixx{-1}{-1}{1}{0}M^{-1}=
\begin{pmatrix}1&-p/9\\27/p&-2\end{pmatrix}
\end{equation}
and
\begin{equation}\label{emb2}
\rho(a+b\omega)=\begin{pmatrix}a&0\\ 0&a\end{pmatrix}+\begin{pmatrix}b&0\\ 0&b\end{pmatrix}\cdot\rho(\omega).
\end{equation}
Recall $\sigma: \wh{K}^\times\ra \Gal(K^\ab/K)$ is the Artin reciprocity law and we denote by $\sigma_t$ the image of $t\in\wh{K}^\times$.   Let $P_0=[\tau_0,1]$ be the CM point on $X_\Gamma$ and $P_{\tau_0}=\psi(P_0)$ its image in $E^3$.

\begin{thm}\label{co:Galois}
The point $P_0\in X_\Gamma(H_{9p})$ satisfies
\[P_0^{\sigma_{1+3\omega_3}}=[\omega]P_0,\quad P_0^{\sigma_{\omega_3}}=P_0.\]
\end{thm}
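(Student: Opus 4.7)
The plan is to apply Shimura's reciprocity law for the adelic CM point $P_0 = [\tau_0, 1] \in X_\Gamma$. For $t \in \wh{K}^\times$, this law gives $\sigma_t(P_0) = [\tau_0, \rho(t)]$, and $[\tau_0, g_1] = [\tau_0, g_2]$ if and only if $g_1 \in \rho(K^\times)\, g_2\, U$. The assertion $P_0 \in X_\Gamma(H_{9p})$ follows by checking that $\wh{\CO_{9p}}^\times \subset \rho^{-1}(U)$ place by place: at $v \nmid 3p$ one has $\rho(\CO_{K,v}^\times) \subset \GL_2(\BZ_v) = U_v$; at $v = p$, $\rho(\CO_{p,p}^\times) \subset \GL_2(\BZ_p) = U_p$; and at $v = 3$, $\rho(\CO_{9,3}^\times) \subset U_0(243)_3$ because $\rho(a + 9b\omega)$ has lower-left entry $243b/p$, divisible by $243$ in $\BZ_3$.

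Combined with Proposition \ref{action}, which gives $\Phi(M)(P_0) = [\tau_0, M^{-1}]$ and identifies $\Phi(B)^{-1}$ with multiplication by $\omega$ on $X_\Gamma$, the two Galois identities of the theorem translate into the coset equalities
\[\rho(\omega_3) \in \rho(K^\times) \cdot U, \qquad \rho(1+3\omega_3) \in \rho(K^\times) \cdot B \cdot U\]
inside $\GL_2(\BA_f)$. Since both adeles are supported only at $v = 3$, the real computation takes place at the $3$-adic prime.

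For the first identity it suffices to show $\rho(\omega) \in U_3$. A direct matrix multiplication produces
\[\rho(\omega)\, C^{-1} = \begin{pmatrix} -2-3p & -(p+1)/9 \\ -54(p+1)/p & -(3+2p)/p \end{pmatrix},\]
whose entries all lie in $\BZ_3$ thanks to $p \equiv 8 \pmod 9$ (so $9 \mid p+1$), whose determinant equals $1$, and whose lower-left entry has $3$-adic valuation at least $5$. Hence $\rho(\omega)\, C^{-1} \in U_0(243)_3$, giving $\rho(\omega) \in U_0(243)_3 \cdot C \subset U_3$ and therefore $\sigma_{\omega_3}(P_0) = P_0$.

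For the second identity I seek $\alpha \in K^\times$ satisfying the local conditions $\alpha \in \CO_{K,v}^\times$ for $v \nmid 3p$, $\alpha \in \CO_{p,p}^\times = \BZ_p^\times(1 + p\CO_{K,p})$ at $v = p$, and $B^{-1} \rho(\alpha^{-1}(1+3\omega)) \in U_3$ at $v = 3$. The first two constraints force $\alpha = \epsilon \sqrt{-3}^k$ for some $\epsilon \in \CO_K^\times$ and $k \in \BZ$ subject to a congruence modulo the cyclic quotient $\CO_{K,p}^\times / \CO_{p,p}^\times \simeq \BZ/(p+1)\BZ$. Using the $K$-identity $1 + 3\omega = \omega(2-\omega)$ to simplify, the third condition reduces to a $3$-adic matrix check carried out with the explicit matrices $A, B, C$. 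The main obstacle is the bookkeeping at $v = 3$: the quotient $U_3 / U_0(243)_3$ inherits a nontrivial $S_3$-structure from $N/\BQ^\times \Gamma_0(243)$, so one must carefully identify the precise coset of $U_3$ in which $B^{-1}\rho(\alpha^{-1}(1+3\omega))$ lies and exploit the congruence $p \equiv 8 \pmod 9$ so that the relevant $3$-adic denominators cancel. Once such an $\alpha$ is exhibited, Shimura reciprocity yields both Galois actions simultaneously.
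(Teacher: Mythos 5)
Your treatment of the field of definition and of the identity $P_0^{\sigma_{\omega_3}}=P_0$ is correct and essentially coincides with the paper's argument: Shimura reciprocity reduces everything to coset conditions modulo $\rho(K^\times)$ and $U$, and your explicit verification that $\rho(\omega)C^{-1}\in U_0(243)_3$ (entries in $\BZ_3$, determinant $1$, lower-left entry of $3$-adic valuation $\geq 5$, using $9\mid p+1$) is equivalent to the paper's check that $C^2\omega_3\in U$, since $C^2=C^{-1}$. Your translation of $[\omega]P_0=\Phi(B)^{-1}(P_0)$ into the coset condition $\rho(1+3\omega_3)\in\rho(K^\times)\,B\,U$ is also correct, because $B^3\in U_0(243)\subset U$ gives $BU=B^{-2}U$.

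The gap is that the identity $P_0^{\sigma_{1+3\omega_3}}=[\omega]P_0$ --- the substantive half of the theorem, since it is what forces $P_{\tau_0}$ to generate the cubic extension $H_{9p}/H_{3p}$ and feeds into Corollary \ref{deffield} --- is never actually proved. You set up a search for an auxiliary $\alpha=\epsilon\sqrt{-3}^{\,k}\in K^\times$ and end with ``once such an $\alpha$ is exhibited,'' but no $\alpha$ is exhibited and the decisive matrix computation is not performed. Worse, the search is a red herring: the idele $1+3\omega_3$ is by definition trivial at every place other than $3$, so $\alpha=1$ already meets all of your constraints away from $3$, and the entire assertion collapses to the single local membership $B^2\rho(1+3\omega)\in U_3$. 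This is precisely what the paper verifies: it multiplies by $A^2\in U$ so as to land in the concretely checkable subgroup $U_0(243)_3$ rather than the generated group $U_3=\langle U_0(243)_3,C,A\rangle$, computes $A^2B^2\rho(1+3\omega)$ explicitly, and checks that its entries lie in $\BZ_3$, its determinant is a $3$-adic unit, and its lower-left entry $2268/p+27540=324(7+85p)/p$ has valuation at least $5$ because $3\mid 7+85p$ when $p\equiv 2\bmod 3$. Until you carry out this (or an equivalent) computation, the first Galois identity, and everything downstream of it, is unsupported.
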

\begin{proof}
By Shimura's reciprocity law \cite[Theorems 6.31 and  6.38]{Shimurabook}, we have
\[P_0^{\sigma_t}=[\tau_0, t],\quad t\in \wh{K}^\times.\]
Since $\wh{\CO_{9p}}^\times\subset\wh{K}^\times\cap U$ by (\ref{emb2}), we see $P_0$ is fixed by the Galois action of $\wh{\CO_{9p}}^\times$ and thus defined over the ring class field $H_{9p}$, and the Galois actions $\sigma_{1+3\omega_3}$ and $\sigma_{\omega_3}$ are clear from the following computations and Proposition \ref{action}
\[A^2B^2(1+3\omega_3)=\left(\begin{pmatrix}783/p +9508&-2377p/3 -145/3\\ 2268/p+27540& -2295p-140\end{pmatrix}_3, A^2B^2\right)\in U,\]
\[C^2\omega_3=\left(\begin{pmatrix}-3/p-2&2p/9+2/9\\ 27/p+27& -3p-2\end{pmatrix}_3, C^2\right)\in U,\]
where the subscript $3$ denotes the $3$-adic component of the adelic matrices.
\end{proof}
Denote
\[\phi: E^3: y^2=x^3+16\cdot 9\ra E^1: y^2=x^3+16,\ \  (x,y)\mapsto (x/\sqrt[3]{9}, y/3),\]
the isomorphism between $E^3$ and $E^1$ which is defined over $\BQ(\sqrt[3]{3})$.
\begin{coro}\label{deffield}
$P_{\tau_0}\in E^3(H_p(\sqrt[3]{3}))$ with $\phi(P_{\tau_0})\in E^1(H_p)$.
\end{coro}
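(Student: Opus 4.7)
The plan is to deduce the corollary from Theorem \ref{co:Galois} by tracking how Galois acts on both $P_{\tau_0}$ and on $\sqrt[3]{3}$. Since the modular parametrization $\psi$ is defined over $\BQ$ and factors through $X_\Gamma$, the point $P_{\tau_0}=\psi(P_0)$ lies in $E^3(H_{9p})$, inheriting the field of definition of $P_0$. It then suffices to verify two Galois invariance statements: (i) $P_{\tau_0}$ is fixed by $\Gal(H_{9p}/H_p(\sqrt[3]{3}))$, and (ii) $\phi(P_{\tau_0})$ is fixed by $\Gal(H_p(\sqrt[3]{3})/H_p)$.

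For (i), I first claim that $\sigma_{\omega_3}$ acts trivially on $H_9=K(\sqrt[3]{3})$. This is a direct class field computation: working in $\Gal(H_9/K)\cong \wh{K}^\times/K^\times\wh{\CO_9}^\times$, the idele $\omega_3\cdot\omega^{-1}$ has trivial component at the place above $3$ and equals $\omega^{-1}\in\CO_{K,v}^\times$ at every other finite place, so it lies in $\wh{\CO_9}^\times$. Hence $\sigma_{\omega_3}$ is trivial in $\Gal(H_9/K)$ and fixes $\sqrt[3]{3}$. Since $\sigma_{\omega_3}$ also fixes $H_p$ (by Proposition \ref{LCF}(2) it generates $\Gal(H_{3p}/H_p)\subset \Gal(H_{9p}/H_p)$), it fixes the compositum $H_p(\sqrt[3]{3})$. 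A ramification argument yields $H_p\cap K(\sqrt[3]{3})=K$, since $H_p/K$ is unramified outside $p$ while $K(\sqrt[3]{3})/K$ is ramified only at $3$; thus $[H_p(\sqrt[3]{3}):H_p]=3$ and $[H_{9p}:H_p(\sqrt[3]{3})]=3$. Since $\sigma_{\omega_3}$ has order $3$, we conclude $\Gal(H_{9p}/H_p(\sqrt[3]{3}))=\langle\sigma_{\omega_3}\rangle$. By Theorem \ref{co:Galois}, $\sigma_{\omega_3}$ fixes $P_0$ and hence $P_{\tau_0}$, establishing (i).

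For (ii), a generator of $\Gal(H_p(\sqrt[3]{3})/H_p)$ is the image of $\sigma_{1+3\omega_3}$, which by Proposition \ref{LCF}(1) sends $\sqrt[3]{3}\mapsto\omega^2\sqrt[3]{3}$, and hence $\sqrt[3]{9}\mapsto\omega\sqrt[3]{9}$. By Theorem \ref{co:Galois}, $\sigma_{1+3\omega_3}(P_{\tau_0})=[\omega]P_{\tau_0}$. The CM action of $\omega$ on $E^3:y^2=x^3+144$ is $(x,y)\mapsto(\omega x,y)$, since this rescales the invariant differential $dx/y$ by $\omega$. Writing $P_{\tau_0}=(x_0,y_0)$, we compute
\[
\sigma_{1+3\omega_3}(\phi(P_{\tau_0}))=\lrb{\frac{\omega x_0}{\omega\sqrt[3]{9}},\frac{y_0}{3}}=\lrb{\frac{x_0}{\sqrt[3]{9}},\frac{y_0}{3}}=\phi(P_{\tau_0}),
\]
verifying (ii).

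The main subtlety lies in the class field computation at the start of (i), namely that $\sigma_{\omega_3}$ restricts to the identity on $H_9$; one must be careful because $H_9$ is not contained in $H_{3p}$ (since $9\nmid 3p$), so the restriction is only available after enlarging to $H_{9p}$. Once this is in hand, the remaining work is Galois bookkeeping combined with the explicit formula for the CM action on the Weierstrass model of $E^3$.
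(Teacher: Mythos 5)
Your proof is correct and follows essentially the same route as the paper, which simply invokes Theorem \ref{co:Galois} and Proposition \ref{LCF} to conclude that $x(\tau_0)/\sqrt[3]{9}$ and $y(\tau_0)$ are fixed by $\Gal(H_{9p}/H_p)$. You usefully make explicit two points the paper leaves implicit: the idele computation showing $\sigma_{\omega_3}$ fixes $\sqrt[3]{3}$ (so that $\Gal(H_{9p}/H_p(\sqrt[3]{3}))=\langle\sigma_{\omega_3}\rangle$), and the normalization $[\omega](x,y)=(\omega x,y)$ via the invariant differential, which is exactly what makes the $\omega$'s cancel in $x(\tau_0)/\sqrt[3]{9}$.
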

\begin{proof}The modular parametrization $\psi$ is defined over $\BQ$.
Write $P_{\tau_0}=(x(\tau_0), y(\tau_0))$. By Theorem \ref{co:Galois} and Proposition \ref{LCF}, $x(\tau_0)/\sqrt[3]{9}\in H_p$ and $y(\tau_0)\in H_p$. So $\phi(P_{\tau_0})=(x(\tau_0)/\sqrt[3]{9}, y(\tau_0))\in E^1(H_p)$.
\end{proof}

We define our Heegner point to be
\[z=\tr_{H_p(\sqrt[3]{3})/L_{(3,p)}}P_{\tau_0}\in E^3(L_{(3,p)}).\]
Then $\phi(z)\in E^1(L_{(p)})$. Here $L_{(3,p)}=K(\sqrt[3]{3},\sqrt[3]{p})$ and $L_{(p)}=K(\sqrt[3]{p})$.

\section{Singular moduli} In this section we prove a key congruence relation which we will need using Gross-Zagier's theory of singular moduli \cite{GZ85} and its generalization by Lauter and Viray \cite{LV}. The readers who are not interested in the details can just look at Theorem  \ref{singularmoduli} and continue reading the next section.

First of all, let us recall the setting in \cite{GZ85} and \cite{LV}. Let $d_1=-3p^2$ and $d_2=-3\cdot 2^2$ be the discriminants of the imaginary quadratic points $27\tau_0$ and $\sqrt{-3}$ respectively. For $i=1,2$, by \cite{ST}(see also \cite[Page 9213]{LV}), there exists a number field $L$ which is only ramified at $p$ and $2$ over $K$, such that, for every prime $q$ and every $\SL_2(\BZ)$ equivalent class $[\tau_i]$ of discriminant $d_i$, there exists an elliptic curve $E(\tau_i)/\CO_{L}$ with good reduction at all primes above $q$ and $j(E(\tau_i))=j(\tau_i)$. Fix a rational prime $\ell$ and a prime $\mu$ of $\CO_L$ above $\ell$. Let $A$ be the ring of integers of $L_{\mu}^{\text{unr}}$ the maximal unramified extension of $L_\mu$. By \cite[Proposition 2.3]{GZ85}, we have
\begin{equation}\label{valuation}
v_{\mu}(j(\tau_1)-j(\tau_2)) =\frac{1}{2}\sum_{n\geq 1}\sharp \Isom_{A/\mu^n}(E(\tau_1),E(\tau_2)).
\end{equation}

\begin{thm}\label{singularmoduli}
$j(27\tau_0)-j(\sqrt{-3})\equiv 0\mod 2^43^55^3$.
\end{thm}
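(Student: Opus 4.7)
The plan is to apply the Gross-Zagier/Lauter-Viray formula (\ref{valuation}) prime-by-prime at each $\ell \in \{2, 3, 5\}$. Set $\tau_1 = 27\tau_0$ with discriminant $d_1 = -3p^2$ and associated order $\CO_{d_1} = \BZ[p\omega]$ of conductor $p$, and $\tau_2 = \sqrt{-3}$ with discriminant $d_2 = -12$ and order $\CO_{d_2} = \BZ[\sqrt{-3}]$ of conductor $2$. Each $\ell \in \{2,3,5\}$ is inert or ramified in $K = \BQ(\sqrt{-3})$, so $E(\tau_1)$ and $E(\tau_2)$ both acquire supersingular reduction at any prime $\mu$ of $L$ above $\ell$, and the CM orders embed into a common maximal order of the quaternion algebra $B_{\ell,\infty}$ ramified at $\ell$ and $\infty$.

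The next step is to interpret $\#\Isom_{A/\mu^n}(E(\tau_1), E(\tau_2))$ via Deuring lifting theory as a count of pairs of optimal embeddings of $\CO_{d_1}$ and $\CO_{d_2}$ into a suitable Eichler-type order at level $n$ in $B_{\ell,\infty}$. The Lauter-Viray extension of Gross-Zagier then expresses $\sum_n \#\Isom_{A/\mu^n}$ as a finite product of local counting functions, in which only $\ell$ and the primes dividing $d_1 d_2$ contribute nontrivially; the answer is a finite sum counting representations of specific integers by binary quadratic forms associated to the quaternion order at $\ell$.

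I would then handle each prime separately. At $\ell = 5$, inert in $K$ and coprime to both discriminants, the classical Gross-Zagier formula applies directly, and the resulting count of low-level optimal embeddings should yield $v_\mu(j(\tau_1) - j(\tau_2)) \geq 3 e_\mu$, where $e_\mu$ is the ramification index of $\mu$ over $\ell$. At $\ell = 2$, inert in $K$ and dividing $d_2$ (so $\CO_{d_2}$ is non-maximal at $2$), the Lauter-Viray refinement for non-maximal orders provides the extra bookkeeping needed to obtain $\geq 4 e_\mu$. At $\ell = 3$, ramified in $K$ and dividing both discriminants, the coprimality hypothesis of classical Gross-Zagier fails outright and the full Lauter-Viray analysis for non-coprime discriminants is essential; the arithmetic hypothesis $p \equiv 8 \pmod 9$ enters here through the local behavior of $\BZ[p\omega]$ and the quaternion order structure at $3$.

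The main obstacle will be the prime $\ell = 3$, where ramification in $K$ and shared discriminant factors between $d_1$ and $d_2$ occur simultaneously --- precisely the scenario that the Lauter-Viray extension was designed to handle, but still requiring careful bookkeeping of optimal embeddings into the local orders of $B_{3,\infty}$ to extract the desired bound $\geq 5 e_\mu$ from the counting function.
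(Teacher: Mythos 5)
Your overall framework is the right one---the paper also evaluates $v_\mu(j(27\tau_0)-j(\sqrt{-3}))$ prime by prime via (\ref{valuation}), disposing of $2$ and $5$ by direct citation (the value $j(\sqrt{-3})=2^4 3^3 5^3$ plus \cite[Corollary 2.5]{GZ85} already handles those primes), and concentrating all the work at $\ell=3$. But at $3$ your plan has a genuine gap: you propose to extract the bound by ``careful bookkeeping of optimal embeddings into the local orders of $B_{3,\infty}$'' for the particular point $27\tau_0$, and this is precisely what the Lauter--Viray machinery does \emph{not} give you. Their counting formulas compute only the sums $\sum_{[\tau_1]}\sharp S_{n,m}(E(\tau_1)/A)$ over \emph{all} $\SL_2(\BZ)$-classes of discriminant $-3p^2$, not the contribution of a single class. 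The paper's way around this is the containment $S_3(E(\tau_1)/A)\subset S_1(E(\tau_1)/A)$: if the two \emph{total} counts agree, equality holds class by class, and $\sharp S_3=\sharp S_1=6$ follows for the class of $27\tau_0$ in particular. Establishing that the totals agree is the real content, and it reduces to the nontrivial identity
\[2\sum_{\substack{0<x<p\\ 2\mid x}}\ \sum_{0<r\mid p^2-x^2}\left(\frac{-3}{r}\right)=p+1\qquad (p\equiv 2\bmod 3),\]
which the paper proves by identifying $\theta(z)^2$ for the form $a^2+ab+b^2$ with $\tfrac13\bigl(G_2(z)-3G_2(3z)\bigr)$ on $\Gamma_0(3)$ and comparing Fourier coefficients (Proposition \ref{sumKron}). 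Nothing in your proposal supplies this global input or an alternative to it; ``$p\equiv 8\bmod 9$ enters through the local behavior'' is not where the hypothesis is actually used.

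A second, smaller omission: even once the counts are in hand, (\ref{valuation}) yields $\tfrac12(12+6)=9$, i.e.\ divisibility by $\sqrt{-3}^{\,9}=3^4\sqrt{-3}$ only---an \emph{odd} power of the ramified prime, strictly weaker than $3^5\mid(j(27\tau_0)-j(\sqrt{-3}))$. The paper closes this half-integral gap by observing that $j(27\tau_0)-j(\sqrt{-3})$ is a real number lying in $H_p$, where $\sqrt{-3}$ is unramified, so divisibility by $3^4\sqrt{-3}$ forces divisibility by $3^5$. Your stated target ``$\geq 5e_\mu$'' skips over this parity issue, and your counts at $2$ and $5$ are likewise asserted rather than derived.
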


\begin{proof}Note $j(\sqrt{-3})=2^43^35^3$, the result about primes $2$ and $5$ is just \cite[Corollary 2.5]{GZ85}. Now we focus on the prime $3$. Let $\ell=3$ and $\mu$ a prime of $L$ above $3$. Since, the class number of $\CO_{d_2}$ is one, we denote $E'=E(\sqrt{-3})$ in the proof.

For $n,m\in\BZ$ such that $n>0, m\geq 0$ and any elliptic curve $E/A$ with CM by $\CO_{d_1}$ and with good reduction, define (see \cite{LV} for the unexplained definitions)
$$S_n(E/A):=\set{\phi\in\End_{A/\mu^n}(E):\phi^2+3=0,\ \BZ[\phi]\hookrightarrow\End_{A/\mu}(E)\ \text{optimal away from $3$}},$$
$$S_n^{\Lie}(E/A):=\set{\phi\in S_n(E/A): \phi=\sqrt{-3}\ \text{in Lie($E\mod \mu^n$) and Lie($E\mod \mu$)}},$$
$$S_{n,m}(E/A):=\set{\phi\in S_n(E/A):\disc(\CO_{d_1}[\phi])=m^2},$$
$$S_{n,m}^{\Lie}:=S_{n,m}(E/A)\cap S_{n}^{\Lie}(E/A).$$
(Note that in our case $\tilde d_2=d_2$ and $s_2=0$ in \cite[section 3.2]{LV} and we choose $\delta=\sqrt{-3}$ instead of $-3+\sqrt{-3}$). It is known that 
\begin{equation}\label{Sn}
S_{n}(E/A)=\sum_{\substack{m=\frac{d_1d_2-x^2}{4}}}S_{n,m}(E/A),\ \ \ S_{n}^{\Lie}(E/A)=\sum_{m=\frac{d_1d_2-x^2}{4}}S_{n,m}^{\Lie}(E/A),
\end{equation}
since of the form $\frac{d_1d_2-x^2}{4}$ is a necessary condition for $m$ to be a discriminant of the orders, see \cite[Page 9215]{LV}. 
By  \cite[(3.1)]{LV}, we have $\sharp\Isom_{A/\mu^n}(E,E')=2\sharp S_n^{\Lie}(E)$. From the proof of \cite[Proposition 8.1]{LV}, we know that $S_{2,m}(E/A)=S_{1,m}(E/A)=S_{1,m}^{\Lie}(E/A)$ and for $n\geq 2$, $\sharp S_{n+1,m}(E/A)=2\sharp S_{n,m}^{\Lie}(E/A)$ ($n>2$ should be $n\geq 2$ in the last part of proof of Proposition 8.1 of \cite{LV}). In particular, we have for $\tau_1$
\begin{equation}\label{iso}
\sharp S_{1}(E(\tau_1)/A)=\sharp S_{2}(E(\tau_1)/A)=\frac{1}{2}\sharp\Isom_{A/\mu}(E(\tau_1),E')=6
\end{equation}
and
$$\sharp\Isom_{A/\mu^2}(E(\tau_1),E')=\sharp S_{3}(E(\tau_1)/A).$$ 
The number $6$ is due to the Proof of Proposition 2.3 in 
\cite{GZ85} in the $l=3$ case.
In the remaining part of the proof we will show that $\sharp S_3(E(\tau_1)/A)=\sharp S_1(E(\tau_1)/A)=6$. From this and (\ref{valuation}), we can deduce that $j(27\tau_0)-j(\sqrt{-3})$ is divided by $\sqrt{-3}^9=3^4\sqrt{-3}$. But $j(27\tau_0)-j(\sqrt{-3})\in H_p$ is real and $\sqrt{-3}$ is the only purely imaginary generator in $H_p$ over $\BQ$, so $j(27\tau_0)-j(\sqrt{-3})$ should be divided by $3^5$. 

By \cite[Theorem 3.2]{LV}, if $(m,p)=1$, 
\begin{equation}\label{count}
\sum_{[\tau_1]}\sharp S_{n,m}(E(\tau_1)/A)=C\rho(m)\fU(3^{-n}m).
\end{equation}
Where $C=1$ if $4m=d_1d_2$ and $C=2$ otherwise. Here $\fU(\cdot)$ is the cardinality of a set of certain ideals of $\CO_{d_1}$ and $\rho(m)$ is a weight, for the definitions we refer to \cite{LV}. By \cite[Theorem 1.5 and Proposition 7.12]{LV},  if $(m,p)=1$, we have the following explicit formula,
$$\rho(m)\fU(3^{-n}m)=\begin{cases} 2\prod_{\ell|m, \ell\neq 3}\lrb{\sum_{i=0}^{\ord_{\ell}(m)}\lrb{\frac{-3}{\ell}}^i},\ \text{if}\  3^{-n}m\in \BZ_{>0}\ \text{and}\ 2\nmid m,\\ 0,\ \text{otherwise}, \end{cases}$$  
or equivalently, 
\begin{equation}\label{counting}
\rho(m)\fU(3^{-n}m)=\begin{cases} 2\sum_{0<r\mid m}\lrb{\frac{-3}{r}},\ \text{if}\  3^{-n}m\in \BZ_{>0}\ \text{and}\ 2\nmid m,\\ 0,\ \text{otherwise}. \end{cases}
\end{equation}
Here $\lrb{\frac{\cdot}{\cdot}}$ is the Kronecker symbol.

It is known that $S_3(E(\tau_1)/A)\subset S_{1}(E(\tau_1)/A)$ for any $\tau_1$. In order to prove $\sharp S_3(E(\tau_1)/A)=\sharp S_1(E(\tau_1)/A)$, we just need to prove that 
\begin{equation}\label{equ}
\sum_{[\tau_1]}\sharp S_3(E(\tau_1)/A)=\sum_{[\tau_1]}\sharp S_1(E(\tau_1)/A).
\end{equation}
By (\ref{iso}) and Proposition \ref{LCF}(3), we have that 
\[\sum_{[\tau_1]}\sharp S_1(E(\tau_1)/A)=6\sharp\Pic(\CO_{d_1})/2=p+1.\]
By (\ref{Sn}), (\ref{count}) and (\ref{counting}), we see that
\[\sum_{[\tau_1]}\sharp S_3(E(\tau_1)/A)=2\sum_{\substack{0<x<p\\ 2|x}}\sum_{0<r|p^2-x^2}\lrb{\frac{-3}{r}}+\sum_{[\tau_1]}\sharp S_{3,9p^2}((E(\tau_1)/A)).\]
Proposition \ref{sumKron} below shows that $S_{3,9p^2}((E(\tau_1)/A))=\emptyset$ and (\ref{equ}) is true. This finishes the proof of the theorem.
\end{proof}
\begin{remark}
In fact, one can show that $S_{1,m}((E(\tau_1)/A))=S_{3,m}((E(\tau_1)/A))$ if $m\neq 9p^2$ using the condition $m=\frac{36p^2-x^2}{4}$ and  (\ref{count})-(\ref{counting}) since $3|\frac{36p^2-x^2}{4}$ if and only if $3^3|\frac{36p^2-x^2}{4}$. So we also have $S_{1,9p^2}((E(\tau_1)/A))=\emptyset$.
\end{remark}

The proof of the following identities was communicated to the author by D. Zagier.   `GH from MO' on mathoverflow gave another proof of one special case using Siegel mass formula, see \cite{math}.  

\begin{proposition}\label{sumKron}
Let $p$ be a prime, then 
\[2\sum_{\substack{0<x<p\\ 2\nmid x}}\sum_{0<r\mid p^2-x^2}\lrb{\frac{-3}{r}}=\begin{cases}\frac{p-1}{3},\ \mathrm{if}\ p\equiv 1\mod 3,\\ \frac{p+1}{3},\ \mathrm{if}\ p\equiv 2\mod 3.\end{cases}\]
\[2\sum_{\substack{0<x<p\\ 2\mid x}}\sum_{0<r\mid p^2-x^2}\lrb{\frac{-3}{r}}=\begin{cases}p-3,\ \mathrm{if}\ p\equiv 1\mod 3,\\ p+1,\ \mathrm{if}\ p\equiv 2\mod 3.\end{cases}\]
Here $\lrb{\frac{\cdot}{\cdot}}$ is the Kronecker symbol.
\end{proposition}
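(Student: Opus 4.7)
My proposed strategy is to convert each sum to an additive convolution of $f := 1 \ast \chi_{-3}$ (viewed as the number-of-ideals function for $\CO_K = \BZ[\omega]$) and then evaluate that convolution via a Siegel-type identity for $\theta_K^2$. Let $\theta_K(\tau) = \sum_{z \in \CO_K} q^{N(z)} = 1 + 6 \sum_{n \geq 1} f(n) q^n$. Since $M_2(\Gamma_0(3))$ is one-dimensional and $\theta_K^2$ sits in it with constant term $1$, comparing coefficients identifies $\theta_K^2$ with the normalized Eisenstein series, giving
\[
[\theta_K^2](q^n) = 12\sigma_1(n) - 36\sigma_1(n/3),
\]
with $\sigma_1(n/3) = 0$ when $3 \nmid n$. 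For a prime $p > 3$ this specializes to $[\theta_K^2](q^p) = 12(p+1)$ and $[\theta_K^2](q^{2p}) = 36(p+1)$; equivalently, $\theta_K^2$ is the Brandt theta series of the maximal order $\BZ[\omega] + \BZ[\omega]j$ in the quaternion algebra $(-3,-1)/\BQ$, which has discriminant $3$ and class number one.

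For the first identity, I would start from the elementary observation that $f(4n) = f(n)$ for all $n \geq 1$: both sides equal $[v_2(n)\text{ even}] \cdot f(n_{\mathrm{odd}})$, because $\chi_{-3}(2) = -1$. For odd $x \in (0,p)$, the substitution $(u,v) = ((p-x)/2,(p+x)/2)$ is a bijection onto pairs of positive integers with $u+v=p$; since $\gcd(u,v)\mid p$ with $u,v < p$, in fact $\gcd(u,v) = 1$, and multiplicativity yields $f(p^2 - x^2) = f(4uv) = f(uv) = f(u)f(v)$. Symmetrizing gives
\[
T_1 = \sum_{\substack{u+v=p \\ u,v\geq 1}} f(u)f(v).
\]
Expanding $[\theta_K^2](q^p) = \sum_{a+b=p,\,a,b\geq 0} r_K(a) r_K(b) = 12 f(p) + 36 T_1 = 12(p+1)$ yields $T_1 = (p+1 - f(p))/3 = (p - \chi_{-3}(p))/3$, matching the claimed $(p-1)/3$ or $(p+1)/3$ according to $p \bmod 3$.

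For the second identity, when $x$ is even with $0 < x < p$ the integers $p \pm x$ are odd with gcd dividing $p$, hence coprime (since $p-x < p$); multiplicativity gives $f(p^2 - x^2) = f(p-x) f(p+x)$. Setting $a = p-x$, $b = p+x$ and symmetrizing to include the diagonal $a=b=p$ gives
\[
T_2 = \Bigl(\sum_{\substack{a+b=2p \\ a,b\geq 1,\ \text{odd}}} f(a)f(b)\Bigr) - f(p)^2.
\]
The crucial observation is that the odd-odd sum coincides with the full convolution $\sum_{a+b=2p,\,a,b\geq 1} f(a) f(b)$: if instead both $a,b$ were even, writing $a = 2a'$, $b = 2b'$ with $a'+b' = p$ odd forces one of $a',b'$ to be odd (say $a'$), whence $v_2(a) = 1$ is odd and $f(a) = 0$. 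Combined with $[\theta_K^2](q^{2p}) = 12 f(2p) + 36 \cdot (\text{convolution}) = 36(p+1)$ and $f(2p) = f(2)f(p) = 0$, the convolution equals $p+1$, and hence $T_2 = (p+1) - (1 + \chi_{-3}(p))^2$, which is $p-3$ or $p+1$ as required.

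The only nontrivial input is the identification $\theta_K^2 = \text{Eisenstein}$, which is immediate from $\dim M_2(\Gamma_0(3)) = 1$ (or from the Brandt-matrix viewpoint, using class number one); everything else is elementary manipulation of the multiplicative function $f$.
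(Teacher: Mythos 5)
Your proof is correct and follows essentially the same route as the paper's: identify the square of the theta series attached to the norm form of $\BZ[\omega]$ with the weight-$2$ Eisenstein series on $\Gamma_0(3)$ (using $\dim M_2(\Gamma_0(3))=1$), then read off the coefficients of $q^p$ and $q^{2p}$ and unwind them via multiplicativity and coprimality of $p\pm x$; the only differences are the normalization of the theta series and the bookkeeping of the boundary/diagonal terms, both handled correctly.
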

\begin{proof}
Let $r_{-3}(n)$ be the representation number of $n$ by the quadratic form $a^2+ab+b^2$ and $R(n)=\sum_{0<d\mid n}\lrb{\frac{-3}{d}}$ if $n\neq 0$. We also let $R(0)=\frac{1}{6}$. Then it is well-known that $r_{-3}(n)=6R(n)$. The theta series
\[\theta(z)=\sum_{n\in\BZ}R(n)q^{n^2}=\frac{1}{6}\sum_{n\in \BZ}r_{-3}(n)q^{n^2}\]
is a modular form of $\Gamma_0(3)$ of weight $1$ with character $\lrb{\frac{-3}{\cdot}}$, see \cite[Page 30]{Zagier}. So $\theta^2(z)$ is a modular form of $\Gamma_0(3)$ of weight $2$. Let 
\[G_2(z)=-\frac{1}{24}+\sum_{n=1}^{\infty}\sigma(n)q^n\]
be the `Eisenstein series of weight $2$' (it is not a modular form actually), where $\sigma(n)=\sum_{0<d|n}d$ is the divisor function. However, 
\[G(z)=G_2(z)-3G_2(3z)\]
is really a modular form of $\Gamma_0(3)$ of weight $2$ \cite[Page 3]{Zagier}. By the dimension formula, the vector space of modular forms of $\Gamma_0(3)$ of weight $2$ has dimension $1$, so
\begin{equation}\label{theta1}
\theta(z)^2=\frac{1}{3}G(z)
\end{equation}
Now if $x$ is odd, $R(p^2-x^2)=R(\frac{p+x}{2})R(\frac{p-x}{2})$ since $R(n)$ is multiplicative and $(\frac{p+x}{2},\frac{p-x}{2})=1$. Then 
\begin{eqnarray*}
2\sum_{\substack{0<x<p\\ 2\nmid x}}R(p^2-x^2)+2R(0)R(p)=a_{p}
\end{eqnarray*}
where $a_{p}$ is the Fourier coefficient of $q^{p}$ in $\theta(z)^2$. But by (\ref{theta1}), $a_p=\frac{1}{3}\sigma(p)=\frac{p+1}{3}$. Then the first formula follows from the fact that $2R(0)R(p)=1/3$ or $0$ according to $p\equiv1$ or $2\mod 3$. 

If $2\mid x$, then $R(p^2-x^2)=R(p-x)R(p+x)$ since $(p-x,p+x)=1$. Then we have
\[2\sum_{\substack{0<x<p\\ 2\mid x}}R(p^2-x^2)+R(p)R(p)+2\sum_{\substack{0<x\leq p\\ 2\nmid x}}R(p-x)R(p+x)=a_{2p}=p+1,\]
where $a_{2p}$ is the Fourier coefficient of $q^{2p}$ in $\theta(z)^2$. If $2\nmid x$, then $\ord_2(p-x)=1$ or $\ord_2(p+x)=1$ which implies $R(p-x)R(p+x)=0$ by the definition of $R(n)$. So, we get
\[2\sum_{\substack{0<x<p\\ 2\mid x}}R(p^2-x^2)=p+1-R(p)^2=\begin{cases}p-3,\ \mathrm{if} \ p\equiv 1\mod 3,\\ p+1,\ \mathrm{if}\ p\equiv 2\mod 3.\end{cases}\]
\end{proof}

\section{Reduction of Heegner points}

Recall that $x(\tau)$ and $y(\tau)$ are the modular parametrization of $E^3$ in (\ref{modular}).
Using Sagemath we can compute that 
\[x\lrb{\frac{\omega}{9(2\omega+1)}}=4\sqrt{-3}\sqrt[3]{3}\omega^2,\ \ \  y\lrb{\frac{\omega}{9(2\omega+1)}}=24\omega-12.\]
Then by \cite[Proposition 3.1]{SSY}, we have
\[y\lrb{\frac{p\omega}{9(2\omega+1)}}\equiv y\lrb{\frac{\omega}{9(2\omega+1)}}^p\equiv 24\omega^2-12\mod p\]
and
\[x\lrb{\frac{p\omega}{9(2\omega+1)}}\equiv x\lrb{\frac{\omega}{9(2\omega+1)}}^p\equiv (4\sqrt{-3}\sqrt[3]{3}\omega^2)^p\mod p.\]
Since $\sqrt[3]{3}\in \BF_{p}$ by Lemma \ref{prime}, $(\sqrt[3]{3})^p\equiv \sqrt[3]{3}\mod p$. Also $(-3)^{\frac{p-1}{2}}=\left(\frac{-3}{p}\right)=-1\mod p$, then we get
\[x\lrb{\frac{p\omega}{9(2\omega+1)}}\equiv (-4\sqrt{-3}\sqrt[3]{3}\omega)\mod p.\]
That is 
\begin{equation}\label{congruence1}
P_{\tau_0}=\psi\left(\frac{p\omega}{9(2\omega+1)}\right)\equiv \left(-4\sqrt{-3}\sqrt[3]{3}\omega, 24\omega^2-12\right)\mod p.
\end{equation}

Write 
$$f(\tau)= \frac{\eta(\tau)^4}{\eta(3\tau)^4}=q^{-\frac{1}{3}}\prod_{n=0}^{\infty}(1-q^{3n+1})^4(1-q^{3n+2})^4,\ \ \ \tau\in \CH,$$
with $q=e^{2\pi i \tau}$.
Using \cite[Proposition 3.1]{SSY}, we also get
\begin{equation}\label{congruence2}
f(9\tau_0)\equiv \lrb{f\lrb{\frac{\omega}{(2\omega+1)}}}^p \equiv -3\omega^2 \mod p.
\end{equation}

\begin{lem}\label{keylem}
$f(9\tau_0)=3\omega^2 u$ for some real unit $u$.
\end{lem}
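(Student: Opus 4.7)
The plan is to combine a complex-conjugation identity with the singular-moduli congruence of Theorem \ref{singularmoduli} to show first that $\overline{f(9\tau_0)} = \omega^2 f(9\tau_0)$, and second that $(f(9\tau_0)) = (3)$ as ideals of $\CO_{H_p}$. Together these will force $f(9\tau_0) = 3\omega^2 u$ for a real unit $u$.

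For the conjugation identity, I compute $9\tau_0 = p/2 + ip\sqrt{3}/6$, whence $-\overline{9\tau_0} = 9\tau_0 - p$. Since $q^{1/3}f(\tau)$ has integer Fourier coefficients, $\overline{f(\tau)} = f(-\overline{\tau})$, and the shift $\tau \mapsto \tau - p$ multiplies $f$ by $e^{2\pi i p/3} = \omega^2$ (because $p \equiv 2 \mod 3$). Hence $\overline{f(9\tau_0)} = \omega^2 f(9\tau_0)$. Also, $f(9\tau_0) \in H_p$ via the formula $f(9\tau) = -(y(\tau)+12)/8$ read off from (\ref{modular}), combined with Corollary \ref{deffield}.

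For the valuation, set $t := f(9\tau_0)^3$. The classical Hauptmodul relation $j(3\sigma) = (t+3)^3(t+27)/t$ with $\sigma = 9\tau_0$ and $t = f(\sigma)^3$ makes $t$ a root of
\[ P(X) := X^4 + 36 X^3 + 270 X^2 + (756 - j(27\tau_0))X + 729. \]
When $j(27\tau_0)$ is replaced by $j(\sqrt{-3}) = 54000$ the polynomial factors as $P_0(X) = (X-27)(X^3 + 63 X^2 + 1971 X - 27)$, so Theorem \ref{singularmoduli} yields $P(X) \equiv P_0(X) \mod 3^5$. A Newton-polygon calculation at $3$ (vertices $(0,6), (1,3), (4,0)$) produces slopes $-3$ and $-1$ of multiplicities $1$ and $3$, so the four roots have $3$-adic valuations $\{3,1,1,1\}$, with a unique root $t_0 \equiv 27 \mod 3^5$. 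Since $H_p/K$ is unramified at $\sqrt{-3}$ while $K/\BQ$ is ramified of degree $2$ at $3$, every prime $\mathfrak{q}$ of $H_p$ above $3$ satisfies $v_{\mathfrak{q}}(3) = 2$; hence $v_{\mathfrak{q}}(t) = 3 v_{\mathfrak{q}}(f(9\tau_0)) \in 3\BZ$, and intersecting with the possible values $\{6, 2\}$ forces $v_{\mathfrak{q}}(t) = 6$, so $t = t_0$ and $v_{\mathfrak{q}}(f(9\tau_0)) = 2 = v_{\mathfrak{q}}(3)$. The constant term $729 = 3^6$ of $P$ gives $t \mid 729$, so $t$ and hence $f(9\tau_0)$ is a unit at primes not above $3$. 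Therefore $(f(9\tau_0)) = (3)$ in $\CO_{H_p}$, and one can write $f(9\tau_0) = 3 v$ with $v \in \CO_{H_p}^{\times}$.

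Dividing the conjugation identity by $3$ gives $\overline{v} = \omega^2 v$, so setting $u := v\omega$ (equivalently $v = \omega^2 u$) one checks $\bar u = u$: thus $u$ is a real unit and $f(9\tau_0) = 3\omega^2 u$, as claimed. The main subtlety is pinning down \emph{which} of the four roots of $P$ equals $t$: the Newton polygon leaves two $3$-adic valuations possible ($1$ or $3$), and it is the cube-divisibility $v_{\mathfrak{q}}(t) \in 3\BZ$ (forced by $f(9\tau_0) \in H_p$ and the ramification index $e(\mathfrak{q}/3) = 2$) that selects the root $t_0 \equiv 27 \mod 3^5$ arising from the Gross--Zagier congruence.
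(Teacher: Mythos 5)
Your proof is correct, and while its first half (the identity $\overline{f(9\tau_0)}=\omega^2 f(9\tau_0)$ from the integrality of the $q$-expansion and $-\overline{9\tau_0}=9\tau_0-p$) coincides with the paper's opening step, the second half takes a genuinely different route. The paper first invokes Lang's theorem on $\Delta$-quotients to get that $f(9\tau_0)$ is an algebraic integer dividing $9$, then extracts $3\mid f(9\tau_0)^3$ and $f(9\tau_0)^3\mid 27$ from the modular equation together with the integrality of $j(27\tau_0)/27$, and finally upgrades $\sqrt{-3}\mid f(9\tau_0)$ to $3\mid f(9\tau_0)$ using realness plus the fact that $\sqrt{-3}$ is unramified in $H_p/K$. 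You instead package the modular equation into the monic quartic $P(X)$, obtain integrality and the ``unit away from $3$'' statement from its constant term $3^6$ (so Lang's theorem is not needed at all), and pin down the valuation at primes above $3$ by a Newton polygon; the role of the paper's realness trick is played by your observation that $v_{\fq}(t)=3v_{\fq}(f(9\tau_0))\in 3\BZ$ while the two slopes only allow $v_{\fq}(t)\in\{6,2\}$. Your route consumes slightly more of Theorem \ref{singularmoduli} (you need $j(27\tau_0)\equiv 54000\bmod 3^4$ to fix $v_3(756-j(27\tau_0))=3$ exactly, where the paper only needs $27\mid j(27\tau_0)$ here), but it is more systematic and as a by-product already isolates $f(9\tau_0)^3$ as the root near $27$, which anticipates Proposition \ref{iff1}. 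Two minor points: the assertion $t_0\equiv 27\bmod 3^5$ requires a short Hensel-lifting step beyond the Newton polygon (though it is not needed for the lemma), and the deduction $t\mid 729$ should record that all four roots of $P$ are algebraic integers because $P$ is monic with the algebraic integer $j(27\tau_0)$ among its coefficients.
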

\begin{proof}
Firstly, we have $$f(9\tau_0)=f\lrb{\frac{p}{2}+\frac{p\sqrt{-3}}{6}}=\omega^2 r$$ with $r\in\BR$, since 
$$e^{2\pi i(\frac{p}{2}+\frac{p\sqrt{-3}}{6})}=-e^{\frac{-p\sqrt{3}}{3}2\pi}\ \text{and}\ e^{-\frac{2\pi i}{3}(\frac{p}{2}+\frac{p\sqrt{-3}}{6})}=\pm e^{-\frac{2\pi i}{3}}e^{\frac{p\sqrt{3}}{9}2\pi}.$$ 
By \cite[Theorem 12.2.4]{Lang-ef}, $f(9\tau_0)^6=\Delta(9\tau_0)/\Delta(27\tau_0)$ is an algebraic integer and  divides $3^{12}$. So $f(9\tau_0)$ is also algebraic integral and divides $9$. 

On the other hand, $f(\tau)^3$ is the Hauptmodul of $\Gamma_0(3)$ and it satisfies the modular equation \cite[Table 1]{Morain}:
\[(f(\tau)^3+27)(f(\tau)^3+3)^3=j(3\tau)f(\tau)^3.\]
So we get for $\tau=9\tau_0$,
\begin{equation}\label{modularequation}
\lrb{1+\frac{27}{f(9\tau_0)^3}}\lrb{\frac{f(9\tau_0)^3}{3}+1}^3=\frac{j(27\tau_0)}{27}.
\end{equation}
From Theorem \ref{singularmoduli} we know that $j(27\tau_0)/27$ is integral. So by (\ref{modularequation}) and the fact that $f(9\tau_0)\mid 9$, we have $3\mid f(9\tau_0)^3$  and $f(9\tau_0)^3\mid 27$. By Corollary \ref{deffield}, $f(9\tau_0)\in H_p$. Since $\sqrt{-3}$ is not ramified in $H_{p}$, $3\mid f(9\tau_0)^3$ implies $\sqrt{-3}\mid f(9\tau_0)$. But $f(9\tau_0)/\omega^2$ is a real number, so $3\mid f(9\tau_0)$. Together with the fact $f(9\tau_0)^3\mid 27$, we know that $f(9\tau_0)/(3\omega^2)$ is a real unit which finishes the proof.
\end{proof}

For a prime $\wp\mid\sqrt{-3}$ of $H_p$, $E^1\mod \wp$ is singular with the singular point $(-1,0)$. Let $E^1_0$ be the preimage of the nonsingular locus, then we have
\begin{proposition}\label{singular}
$$E^1(H_p)/E^1_0(H_p)=\{O, (-4,\pm 4\sqrt{-3})\}.$$ 
\end{proposition}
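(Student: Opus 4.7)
The plan is to relate the quotient $E^1(H_p)/E^1_0(H_p)$ to the Kodaira--N\'eron component group at $\wp$ and to identify the three classes using $3$-torsion. First I will observe that $E^1: y^2 = x^3 + 16$ has additive reduction at $\wp$: modulo $\sqrt{-3}$ the equation becomes $y^2 \equiv (x+1)^3$, a cuspidal cubic. By the theory of the N\'eron model there is an exact sequence
\[0 \to E^1_0(H_{p,\wp}) \to E^1(H_{p,\wp}) \to \Phi(k_\wp) \to 0,\]
where $\Phi$ is the component group scheme at $\wp$, and the natural map $E^1(H_p)/E^1_0(H_p) \hookrightarrow \Phi(k_\wp)$ is injective.

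Next I will exhibit three classes via $3$-torsion. A direct calculation on $y^2 = x^3 + 16$ shows that $P := (-4, 4\sqrt{-3})$ has order $3$, so $\{O, P, -P\} \subset E^1(K) \subset E^1(H_p)$ is a cyclic subgroup of order $3$. Since $-4 \equiv -1$ and $\pm 4\sqrt{-3} \equiv 0$ modulo $\wp$, both $P$ and $-P$ reduce to the singular point $(-1, 0)$, so neither lies in $E^1_0(H_p)$. If $P$ and $-P$ represented the same coset of $E^1_0(H_{p,\wp})$, then $-P = 2P$ would lie in $E^1_0$, a contradiction. Hence the three points $O, P, -P$ map to three distinct elements of $\Phi(k_\wp)$.

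To conclude equality, I will invoke the Kodaira--N\'eron list of possible component groups for an elliptic curve with additive reduction: the candidates are trivial, $\BZ/2\BZ$, $\BZ/3\BZ$, $(\BZ/2\BZ)^2$, and $\BZ/4\BZ$. The only one containing a subgroup of order $3$ is $\BZ/3\BZ$, so $\Phi(k_\wp) \cong \BZ/3\BZ$. The injection $E^1(H_p)/E^1_0(H_p) \hookrightarrow \Phi(k_\wp)$ then forces equality, with coset representatives $O, (-4, 4\sqrt{-3}), (-4, -4\sqrt{-3})$ as claimed.

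The only delicate point is the claim that $P$ and $-P$ give \emph{distinct} cosets, which is resolved by the identity $2P = -P$ for a $3$-torsion point together with the observation that $-P \notin E^1_0$. As a sanity check, one could run Tate's algorithm on the translated equation $Y^2 = X^3 - 3X^2 + 3X + 15$ (with $X = x+1$, $Y = y$) and directly confirm that the Kodaira type at $\wp$ is split $IV$, yielding $\Phi(k_\wp) \cong \BZ/3\BZ$ explicitly; but the torsion argument above already suffices.
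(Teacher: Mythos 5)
Your proof is correct, but it reaches the component-group computation by a genuinely different route than the paper. The paper runs Tate's algorithm explicitly on the translated equation $y^2=x^3-3x^2+3x+15$, verifies the requisite divisibility conditions on the $b_i$ and the solvability of $T^2-15/\pi^2$ in $\BF_9$, and reads off from step 5 that the reduction is of split type IV with component group of order $3$; it then simply asserts that the two $3$-torsion points lie in distinct non-identity components. You instead exhibit $\{O,P,-P\}$ with $P=(-4,4\sqrt{-3})$ as an order-$3$ subgroup injecting into $\Phi(k_\wp)$ --- and your observation that $2P=-P$ forces $P$ and $-P$ into distinct cosets is a clean justification of a step the paper leaves implicit --- and then invoke the Kodaira--N\'eron classification of component groups in the additive case to conclude $\Phi(k_\wp)\cong\BZ/3\BZ$. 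Both arguments are sound, and both correctly note that the reduction is a cusp (additive), which rules out a multiplicative $\BZ/n\BZ$ with $3\mid n$. Your route trades the bookkeeping of Tate's algorithm for the classification of special fibers; the one fact you use silently is that $y^2=x^3+16$ is a minimal Weierstrass equation at $\wp$, so that $E^1_0$ as defined (preimage of the nonsingular locus) really is the set of points landing in the identity component of the N\'eron special fiber --- this holds because $v_\wp(\Delta)=6<12$, but it is worth stating since the exact sequence you quote depends on it.
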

\begin{proof}
This can be deduced using the Tate algorithm, see \cite{Tate75} or \cite[IV.9]{Silvermanbook2}. To be more explicit, the order of $E^1(H_{p,\wp})/E^{1}_0(H_{p,\wp})$ is determined by the Kodaira type of $E/H_{p,\wp}$ where $H_{p,\wp}$ is the completion of $H_p$ at $\wp$. By Tate algorithm, we first change the equation of $E^1$ to the form $y^2=x^3-3x^2+3x+15$ such that $\pi\mid a_3,a_4,a_6$ where $\pi$ is a uniformizer of $H_{p,\wp}$ and $a_3,a_4,a_6$ are coefficients appearing in the general Weierstrass equation $y^2+a_1xy+a_3y=x^3+a_2x^2+a_4x+a_6$. Then we get $\pi\mid b_2=-6$, $\pi^2\mid a_6=15$, $\pi^3\mid b_8=-189$ and $\pi^3\nmid b_3=60$ where $b_2=a_1^2+4a_2$, $b_4=a_1a_3+2a_4$, $b_6=a_3^2+4a_6$ and $b_8=(b_2b_6-b_4^2)/4$.
Also the equation $T^2-\frac{15}{\pi^2}=0$ has solutions in $\BF_9$, the residue field of $\wp$, since $T^2-\frac{15}{(\sqrt{-3})^2}$ has solutions in $\BF_9$ and $\pi\mid \sqrt{-3}$. Then by step 5 of Tate algorithm, the order of $E^1(H_{p,\wp})/E^{1}_0(H_{p,\wp})$ is $3$. But $(-4, 4\sqrt{-3})$ and $(-4,-4\sqrt{-3})$ are all singular modulo $\wp$ and not in the same singular component, so  $E^1(H_{p,\wp})/E^{1}_0(H_{p,\wp})=\{O, (-4,\pm 4\sqrt{-3})\}$. Then 
$$E^1(H_p)/E^1_0(H_p)\cong E^1(H_{p,\wp})/E^{1}_0(H_{p,\wp})=\{O, (-4,\pm 4\sqrt{-3})\}.$$ 
\end{proof}

Denote $S=(-4,4\sqrt{-3})$, then $-S=(-4,-4\sqrt{-3})$. They are all primitive $3$-torsion points. In this section we prove that $\phi(P_{\tau_0})$ belongs to the same singular component as $S$ for any $\wp$ above $3$.

\begin{proposition}\label{iff1}
$3^5\mid(f(9\tau_0)^3-27)$ if and only if $3^5\mid (j(27\tau_0)-j(\sqrt{-3}))$.
\end{proposition}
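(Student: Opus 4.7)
The plan is to derive the equivalence directly from the modular equation (\ref{modularequation}), essentially reducing it to tracking $3$-adic valuations. Setting $F := f(9\tau_0)^3$ and recalling $j(\sqrt{-3}) = 2^4\cdot 3^3\cdot 5^3 = 54000$, clearing denominators in (\ref{modularequation}) yields
\[(F+27)(F+3)^3 = F\cdot j(27\tau_0).\]
The left hand side equals $54\cdot 30^3 = 27\cdot 54000$ when $F=27$, so the polynomial $(F+27)(F+3)^3 - 54000\,F$ is divisible by $F-27$, and a quick polynomial division produces the identity
\[(j(27\tau_0) - 54000)\cdot F \;=\; (F-27)\cdot Q(F),\qquad Q(F) := F^3 + 63F^2 + 1971F - 27.\]

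Next I would set up the local analysis at $3$. Let $\wp$ be the prime of $H_p$ above $\sqrt{-3}$, which is unique by Proposition~\ref{LCF}(3), and let $v = v_\wp$ be normalized so that $v(\sqrt{-3})=1$, hence $v(3)=2$ and $v(3^5)=10$. By Lemma~\ref{keylem}, $f(9\tau_0) = 3\omega^2 u$ for some real unit $u$, so
\[v(F) = v(27u^3) = 6.\]
The crucial observation is that among the monomials of $Q(F)$, the constant $-27$ is the unique one of smallest valuation: since $63 = 3^2\cdot 7$ and $1971 = 3^3\cdot 73$,
\[v(F^3) = 18,\quad v(63F^2) = 16,\quad v(1971F) = 12,\quad v(-27) = 6,\]
so $v(Q(F)) = 6$ exactly.

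Combining these two computations yields
\[v\bigl(j(27\tau_0) - 54000\bigr) = v(F-27) + v(Q(F)) - v(F) = v(F-27),\]
which is precisely the required equivalence $3^5 \mid (F - 27) \Longleftrightarrow 3^5 \mid (j(27\tau_0) - j(\sqrt{-3}))$. The argument is therefore essentially a valuation bookkeeping once Lemma~\ref{keylem} is available; that lemma is the only nontrivial input, and it plays a genuinely essential role, because without the precise equality $v(F) = 6$ the monomials $-27$ and $F^3$ of $Q(F)$ could both attain valuation $6$ and interact, invalidating the clean identity $v(Q(F)) = 6$ and hence the bare equality $v(j-54000) = v(F-27)$.
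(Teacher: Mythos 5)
Your proof is correct and follows essentially the same route as the paper: both arguments combine the modular equation (\ref{modularequation}) with Lemma~\ref{keylem} ($v(f(9\tau_0)^3)=v(27)$), the only difference being bookkeeping — the paper reduces the equation mod $9$ after dividing by $f(9\tau_0)^3$, while you clear denominators, factor out $F-27$, and track valuations, which in fact yields the slightly stronger equality $v_\wp(j(27\tau_0)-54000)=v_\wp(f(9\tau_0)^3-27)$. One misstatement to fix: by Proposition~\ref{LCF}(3) the prime $\sqrt{-3}$ is totally split in $F/K$ with $[H_p:F]=2$, so there are $(p+1)/6$ primes of $H_p$ above $\sqrt{-3}$, not one; since your valuation computation is uniform in the choice of $\wp$ (the coefficients $27$, $63$, $1971$ are rational and $u$ is a global unit), the equality holds at every such prime and the divisibility equivalence by $3^5$ follows unchanged.
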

\begin{proof}
Recall the equation in (\ref{modularequation}).
Since $\lrb{\frac{f(9\tau_0)^3}{3}+1}^3\equiv 1\mod 9$ by Lemma \ref{keylem}, $3^5\mid(f(9\tau_0)^3-27)$ if and only if $\frac{j(27\tau_0)}{27}-200\equiv 0\mod 9$, i.e., $j(27\tau_0)-5400\equiv 0\mod 3^5$. But $j(\sqrt{-3})=5400$, this finishes the proof.
\end{proof}

\begin{proposition}\label{iff2}
$9\mid\lrb{\frac{\eta(9\tau_0)^4}{\eta(27\tau_0)^4}-3\omega^2}$ if and only if $3^5\mid(f(9\tau_0)^3-27)$.
\end{proposition}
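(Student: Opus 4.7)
The plan is to translate both sides into conditions on the real unit $u := f(9\tau_0)/(3\omega^2) \in \CO_{H_p}^\times$ supplied by Lemma~\ref{keylem}. Using $\omega^6 = 1$ one has
\[a - 3\omega^2 = 3\omega^2(u - 1) \quad \text{and} \quad a^3 - 27 = 27(u^3 - 1),\]
so, since $3\omega^2$ and $27$ are associates of $3$ and $27$ respectively, the proposition reduces to the ideal-theoretic equivalence $3 \mid (u - 1) \iff 9 \mid (u^3 - 1)$ in $\CO_{H_p}$.

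The forward direction is immediate from $u^3 - 1 = (u-1)^3 + 3(u-1)^2 + 3(u-1)$: if $u - 1 = 3s$, then $u^3 - 1 = 9s(3s^2 + 3s + 1)$. For the converse I argue locally at each prime $\wp$ of $\CO_{H_p}$ above $3$. Since cubing is a bijection on $\BF_9^\times$ (as $\gcd(3, 8) = 1$), $v_\wp(u^3 - 1) \ge 4$ forces $u \equiv 1 \pmod{\wp}$. Writing $u - 1 = \sqrt{-3}\,c$ with $c \in \CO_{H_p,\wp}$, the expansion
\[u^3 - 1 = 3\sqrt{-3}\,c(1 - c^2) - 9c^2,\]
together with the valuations $v_\wp(3\sqrt{-3}) = 3$ and $v_\wp(9) = 4$, shows that $v_\wp(u^3 - 1) \ge 4$ either yields $v_\wp(c) \ge 1$ (so $v_\wp(u - 1) \ge 2$, i.e.\ $3 \mid (u - 1)$ locally at $\wp$), or else leaves us in the ``problematic'' case $v_\wp(c) = 0$ with $c \equiv \pm 1 \pmod{\wp}$.

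The main obstacle, and the crux of the proof, is ruling out this problematic case, which I would handle via the reality of $u$. The identity $\bar u = u$, together with $\overline{\sqrt{-3}} = -\sqrt{-3}$, forces $\bar c = -c$, so $c$ is purely imaginary in $H_p$. When $\wp$ is fixed by complex conjugation $\gamma$, the induced action of $\gamma$ on the residue field $\BF_9$ is either trivial (giving $2c \equiv 0$, hence $c \equiv 0$, contradicting $v_\wp(c) = 0$) or the Frobenius $x \mapsto x^3$ (giving $c^3 = -c$, hence $c^2 = -1$, placing $c$ in $\BF_9 \setminus \BF_3$ and a fortiori precluding $c \equiv \pm 1 \pmod{\wp}$). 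When $\gamma\wp \ne \wp$, a parallel analysis at the conjugate pair $\{\wp, \bar\wp\}$, combined with the local ramification structure of $H_p/H_p^+$ at primes above $3$, rules out the remaining configurations.
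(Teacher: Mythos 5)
Your reduction to the real unit $u$, your forward direction, and the first step of your converse (cubing is injective on $\BF_9^\times$, hence $\wp\mid(u-1)$ for every $\wp\mid 3$) all match the paper, which works with the factorization $u^3-1=(u-1)((u-1)^2+3u)$, deduces $\sqrt{-3}\mid(u-1)$ globally, and then asserts that the reality of $u-1$ forces $3\mid(u-1)$. Your local analysis is actually sharper than the paper's: you correctly isolate the only remaining obstruction, namely a prime $\wp$ with $v_\wp(c)=0$ and $c\equiv\pm1\pmod{\wp}$, and your argument eliminating it when $\wp$ is fixed by complex conjugation $\gamma$ is correct.

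The genuine gap is the case $\gamma\wp\neq\wp$, which you dispose of in one unsubstantiated sentence. This case really occurs: for $p=17$ one has $H_p^+=\BQ(\sqrt[3]{17},\sqrt{17})$ and $3\CO_{H_p^+}=\fq_1\fq_2^2$, where the completion of $H_p^+$ at $\fq_2$ already contains $\sqrt{-3}$, so $\fq_2$ splits in $H_p/H_p^+$ and two of the three primes of $H_p$ above $3$ are interchanged by $\gamma$. At such a pair the relation $\gamma(c)=-c$ only ties the residue of $c$ at $\wp$ to its residue at $\gamma\wp$ (lifting $\epsilon\in\{0,\pm1\}$ to $\BZ$ shows $c\equiv\epsilon\pmod{\wp}$ implies $c\equiv-\epsilon\pmod{\gamma\wp}$), and the problematic configuration $c\equiv 1\pmod{\wp}$, $c\equiv-1\pmod{\gamma\wp}$ satisfies this relation; so no ``parallel analysis at the conjugate pair'' can rule it out. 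Equivalently, at a split prime the completion of $H_p$ coincides with that of $H_p^+$, so the reality of $u-1$ imposes no parity constraint on $v_\wp(u-1)$ and the value $v_\wp(u-1)=1$ is not excluded by anything you have said. Some further input about $u$, beyond ``real unit with $9\mid(u^3-1)$,'' is needed at these primes. (For comparison, the paper's one-line conclusion ``this forces $3\mid(u-1)$'' is likewise immediate only at primes ramified over $H_p^+$, where valuations of real elements are even; you have therefore isolated exactly the point requiring more justification, but you have not supplied it.)
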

\begin{proof}
By Lemma \ref{keylem}, $f(9\tau_0)=3\omega^2 u$ for some real unit $u$. Then $3^5\mid (f(9\tau_0)^3-27)$ is equivalent to $9\mid (u^3-1)$ and $9\mid \lrb{\frac{\eta(9\tau_0)^4}{\eta(27\tau_0)^4}-3\omega^2}$ is equivalent to $3\mid (u-1)$.  Note that 
\begin{equation}\label{cube}
u^3-1=(u-1)((u-1)^2+3u).
\end{equation}
Hence $3\mid (u-1)$ obviously implies $9\mid (u^3-1)$. Conversely, if $9\mid (u^3-1)$ then for any $\wp\mid\sqrt{-3}$, we have $\wp\mid (u-1)$ by (\ref{cube}). So $\sqrt{-3}\mid (u-1)$. But $u-1\in H_p$ is a real number and $H_p=K(j(9\tau_0))$ with $j(9\tau_0)\in \BR$ by CM theory (see \cite[Page 133, Remark 1]{Lang-ef}). This forces $3\mid (u-1)$ and completes the proof.
\end{proof}

Now we can prove the following theorem about the reduction of our Heegner points.

\begin{thm}\label{singularcomponent}
$\phi(P_{\tau_0})-S\mod \wp$ is nonsingular for any prime $\wp$ above $\sqrt{-3}$ of $H_p$.
\end{thm}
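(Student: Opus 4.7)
The plan is to chase the $\wp$-adic congruence for $y(\phi(P_{\tau_0}))$ down to precision $\wp^2$, and then use an explicit blow-up chart for $E^1$ at $S$ to read off the component of the N\'eron special fiber on which $\phi(P_{\tau_0})$ lies. First, combining Theorem \ref{singularmoduli} with Propositions \ref{iff1} and \ref{iff2} yields
\[\frac{\eta(9\tau_0)^4}{\eta(27\tau_0)^4} \equiv 3\omega^2 \pmod{9\mathcal{O}_{H_p}}.\]
Since $\sqrt{-3}$ is unramified in $H_p/K$ by Proposition \ref{LCF}, $v_\wp(9)=4$. The parametrization $y(\tau) = -8\eta(9\tau)^4/\eta(27\tau)^4 - 12$ then yields $y(\tau_0) \equiv -24\omega^2 - 12 \pmod{\wp^4}$, and dividing by $3$ (whose $\wp$-valuation is $2$) gives
\[y(\phi(P_{\tau_0})) \equiv -8\omega^2 - 4 = 4\sqrt{-3} \pmod{\wp^2},\]
where the equality $-8\omega^2 - 4 = 4\sqrt{-3}$ uses $\omega^2 = (-1-\sqrt{-3})/2$. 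In particular $v_\wp(y(\phi(P_{\tau_0}))-4\sqrt{-3}) \ge 2$, matching $y(S) = 4\sqrt{-3}$ to second order.

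Next I would analyze the local geometry of the Weierstrass model at $S$. The substitution $u = y - 4\sqrt{-3}$, $v = x+4$ transforms $E^1$ into $u^2 + 8\sqrt{-3}\,u = v^3 - 12v^2 + 48v$, which is singular at the origin of the special fiber above $\wp$ since $v_\wp$ of each of $8\sqrt{-3}$, $12$, $48$ is $\ge 1$. Introducing the blow-up chart $u = \sqrt{-3}\,w$, $v = \sqrt{-3}\,s$ and dividing by $-3$ gives $w^2 + 8w = \sqrt{-3}\,s^3 - 12s^2 - 16\sqrt{-3}\,s$, so reducing modulo $\sqrt{-3}$ the special-fiber equation collapses to $w(w+8) = 0$, cutting out two distinct lines $w \equiv 0$ and $w \equiv -8 \equiv 1$ in $\mathbb{F}_9$. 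These are two of the three components of the Kodaira type IV N\'eron fiber; the third (identity) component $C_0$, being the strict transform of the smooth locus of $y^2 = (x+1)^3$, sits in a different chart. In the present chart $S$ has coordinates $(w,s) = (0,0)$ and thus lies on $C_+ := \{w\equiv 0\}$, while $-S = (-4, -4\sqrt{-3})$ has $u = -8\sqrt{-3}$ and hence $w = -8$, placing $-S$ on $C_- := \{w \equiv 1\}$.

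Finally, I would locate $\phi(P_{\tau_0})$ in the same chart. Step one gives $v_\wp(u_1) \ge 2$ for $u_1 = y_1 - 4\sqrt{-3}$. A short computation using the Weierstrass identity $(y_1-4\sqrt{-3})(y_1+4\sqrt{-3}) = (x_1+4)(x_1^2 - 4x_1 + 16)$, together with $v_\wp(y_1 + 4\sqrt{-3}) = 1$ and $v_\wp(x_1^2 - 4x_1 + 16) = 2$ (both following from $y_1 \equiv 4\sqrt{-3}$ and hence $x_1 \equiv -4 \pmod \wp$), yields $v_\wp(x_1+4) \ge 1$, so $\phi(P_{\tau_0})$ indeed lies in the blow-up chart. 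Then $v_\wp(w_1) = v_\wp(u_1) - 1 \ge 1$, i.e.\ $w_1 \equiv 0 \pmod \wp$, placing $\phi(P_{\tau_0})$ on the component $C_+$ of $S$. Consequently $\phi(P_{\tau_0}) - S$ lies in the identity component and has non-singular reduction at $\wp$, as claimed. The trickiest step is arranging the local N\'eron-model analysis: the key point is that the chart $u, v \in \wp$ captures exactly the two exceptional components $C_\pm$ (and not $C_0$), which is what makes the simple valuation criterion $v_\wp(w_1) \ge 1$ sufficient to conclude same-component with $S$.
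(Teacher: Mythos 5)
Your proof is correct, and the first half coincides with the paper's: both arguments run Theorem \ref{singularmoduli} through Propositions \ref{iff1} and \ref{iff2} to get the crucial precision $v_\wp\bigl(y_1-4\sqrt{-3}\bigr)\ge 2$ (the paper's congruence $y_1-4\sqrt{-3}\equiv 0\bmod 3$). Where you diverge is in converting this valuation data into component data. The paper never resolves the singularity explicitly: it shows that $\phi(P_{\tau_0})$ reduces to the singular point $(-1,0)$, then computes via the addition law that $\phi(P_{\tau_0})+S$ \emph{also} reduces to $(-1,0)$, and concludes by elimination using Proposition \ref{singular} (the component group is $\{O,\pm S\}$ of order $3$, so if $\phi(P_{\tau_0})$ were in the class of $-S$ then $\phi(P_{\tau_0})+S$ would be nonsingular). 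You instead blow up the Weierstrass model at the singular point and read off the component directly from the sign of $w=(y_1-4\sqrt{-3})/\sqrt{-3}\bmod\wp$; this identifies the component of $\phi(P_{\tau_0})$ positively rather than by elimination, and makes transparent why precision $\wp^2$ (and not merely $\wp$) in the $y$-coordinate is exactly what is needed. The cost is that you must know your chart really is (an open piece of) the minimal regular model, i.e.\ that the blown-up surface is regular along the two lines $w\equiv 0$ and $w\equiv 1$ so that they are honest distinct components of the N\'eron special fiber; you assert this rather than check it, but it follows immediately since $\partial/\partial w\,(w^2+8w)=2w+8$ is a unit on both lines, and it is in any case the content of step 5 of Tate's algorithm invoked in Proposition \ref{singular}. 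Two cosmetic remarks: the exact equality $v_\wp(x_1^2-4x_1+16)=2$ is neither needed nor fully justified (only $\ge 2$ is), and the detour through the Weierstrass identity to get $v_\wp(x_1+4)\ge 1$ is redundant once you have observed that $y_1\equiv 0\bmod\wp$ forces $x_1^3\equiv -16$, whence $x_1\equiv -1\equiv -4\bmod\wp$ by bijectivity of cubing in characteristic $3$. Neither affects the validity of the argument.
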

\begin{proof}
Assume $\phi(P_{\tau_0})=(x_1,y_1)$. By Proposition \ref{iff1}, Proposition \ref{iff2} and Theorem \ref{singularmoduli}, we know that $9\mid\lrb{\frac{\eta(9\tau_0)^4}{\eta(27\tau_0)^4}-3\omega^2}$.
Then 
\begin{equation}\label{cong1}y_1-4\sqrt{-3}=-\frac{8}{3}\left(\frac{\eta(9\tau_0)^4}{\eta(27\tau_0)^4}-3\omega^2\right)\equiv 0\mod 3.
\end{equation}
By the equation of $E^1$, we have $y_1^2=x_1^3+16$, equivalently, 
\begin{equation}\label{eque1}
(x_1+4)((x_1+4)^2-12x_1)=(y_1+4\sqrt{-3})(y_1-4\sqrt{-3}).
\end{equation}
The right hand side of (\ref{eque1}) is divided by $3\sqrt{-3}$ by (\ref{cong1}), so $3\sqrt{-3}\mid (x_1+4)^3$ on the left hand side of (\ref{eque1}).
As a result, we have
\begin{equation}\label{cong2}x_1+4\equiv 0\mod \sqrt{-3}\end{equation}
and $\phi(P_{\tau_0}) \mod \wp$ is singular for any $\wp\mid\sqrt{-3}$.  We prove that $\phi(P_{\tau_0})+S$ is always singular modulo $\wp$. Let $\phi(P_{\tau_0})+S=(x_2,y_2)$, then by the addition formula,
\begin{equation}\label{cong3}
x_2=\lrb{\frac{y_1-4\sqrt{-3}}{x_1+4}}^2-(x_1-4).
\end{equation}
We have seen $x_1\neq -4$ by (\ref{congruence1}), so (\ref{eque1}) is equivalent to
\begin{equation*} \frac{y_1-4\sqrt{-3}}{x_1+4}=\frac{((x_1+4)^2-12x_1)}{y_1+4\sqrt{-3}}.\end{equation*}
Then we have $$\frac{y_1-4\sqrt{-3}}{x_1+4}\equiv 0\mod \sqrt{-3},$$
since by (\ref{cong1})(\ref{cong2}), $((x_1+4)^2-12x_1)$ is divided by $3$ but $y_1+4\sqrt{-3}$ can be divided just by $\sqrt{-3}$. As a result, $x_2\equiv -1\mod \sqrt{-3}$ by (\ref{cong2})(\ref{cong3}). Hence, $\phi(P_{\tau_0})+S\mod \wp$ is always singular and as a result, $\phi(P_{\tau_0})-S\mod\wp$ is always nonsingular by Proposition \ref{singular}. 
\end{proof}

\section{Nontriviality of Heegner points}\label{nontrivial}
Recall our Heegner points are
\[z=\tr_{H_p(\sqrt[3]{3})/L_{(3,p)}}P_{\tau_0}\in E^3(L_{(3,p)}),\]
with $\phi(z)\in E^1(L_{(p)})$.
Let \[R_{\tau_0}=\left(-4\sqrt{-3}\sqrt[3]{3}\omega, 24\omega^2-12\right)\in E^3(L_{(3)}),\]
then $R_{\tau_0}\equiv P_{\tau_0}\mod p$. Recall we have the isomorphism
\[\phi: E^3: y^2=x^3+16\cdot 9\ra E^1: y^2=x^3+16,\]
\[(x,y)\mapsto (x/\sqrt[3]{9}, y/3).\] 
Mapping $R_{\tau_0}+(0,-12)$ to $E^1$ through $\phi$,
we get $R_{\tau_0}'=(-\frac{4}{\sqrt[3]{3}},-\frac{4\sqrt{-3}}{3})$. Under the isomorphism
\[\phi_1: E^1:y^2=x^3+16\ra E_{min}^1:y^2+y=x^3,\]
\[(x,y)\mapsto \lrb{\frac{x}{4},\frac{y-4}{8}},\]
$R_{\tau_0}'$ is mapped to $a=(-\frac{1}{\sqrt[3]{3}}, \frac{-3-\sqrt{-3}}{6})$. Here we choose use the minimal model $E^1_{min}$ of $E^1$ in order to make the polynomial $D(x)$ below as simple as possible.
Since $3\mod p$ is always a cube by Lemma \ref{prime}, both $\phi$ and $\phi_1$ reduce to isomorphisms over $\BF_{p^2}$. Let $b=\phi_1(S)=(-1,\omega)$ be the primitive $3$-torsion point on $E^1_{min}$. Define the point $c=a-b=\left(-\frac{(\sqrt[3]{3})^2 + 3\sqrt[3]{3}+1}{4}, \frac{\sqrt{-3}(\sqrt[3]{3})^2+3\sqrt{-3}\sqrt[3]{3}+5\sqrt{-3}-4}{8}\right)$.

\begin{lem}\label{torsion}
We have $E^1(L_{(p)})_{\tor}=E^1(K)$.
\end{lem}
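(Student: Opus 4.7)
The plan is to bound $E^1(L_{(p)})_{\tor}$ by reducing modulo a prime of good reduction whose residue field has exactly $9$ points on $E^1$, and then to rule out $2$-primary torsion by a Kummer-theoretic disjointness argument. Throughout I take as known the standard fact that $E^1(K) = E^1[3] \cong (\BZ/3\BZ)^2$: all of $E^1[3]$ is $K$-rational because $\omega, \sqrt{-3}\in K$, while $E^1/K$ has Mordell--Weil rank $0$ by classical CM theory (the associated Hecke $L$-value is nonzero).

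First I would consider the prime $\fp_2 = 2\CO_K$, which is inert in $K$ with residue field $\BF_4$. Since $p$ is odd, $p$ lies in the principal unit subgroup $1 + 2\CO_{K,\fp_2}$ of $K_{\fp_2}^\times$; this group is pro-$2$, so cubing is a bijection on it, and hence $p$ is a cube in $K_{\fp_2}$. Consequently the Kummer cubic extension $L_{(p)} = K(\sqrt[3]{p})$ splits completely at $\fp_2$, and I may pick a prime $\Lambda$ of $L_{(p)}$ above $\fp_2$ whose residue field is $\BF_4$. The curve $E^1: y^2 = x^3 + 16$ is isomorphic over $\BQ$ to its minimal Weierstrass model $y^2 + y = x^3$ of conductor $27$, which has good reduction at $2$; a direct point count gives $|E^1(\BF_4)| = 9$. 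Since reduction at a prime of good reduction is injective on torsion of order prime to the residue characteristic, the prime-to-$2$ part of $E^1(L_{(p)})_{\tor}$ embeds into $E^1(\BF_4)$ and so has order dividing $9$. Combined with $E^1[3] \subseteq E^1(L_{(p)})_{\tor}$, this forces the prime-to-$2$ part to equal exactly $E^1[3]$.

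Next I would rule out nontrivial $2$-power torsion. The three nontrivial $2$-torsion points of $E^1$ have $x$-coordinates $-2\omega^i\sqrt[3]{2}$ for $i = 0, 1, 2$, so they are defined over $K(\sqrt[3]{2})$. By Proposition \ref{LCF}, $L_{(p)}/K$ is unramified at $\fp_2$, whereas $K(\sqrt[3]{2})/K$ is totally ramified at $\fp_2$ (since $v_{\fp_2}(2) = 1 \not\equiv 0 \pmod 3$). Thus the two cubic extensions $L_{(p)}$ and $K(\sqrt[3]{2})$ of $K$ are distinct, and so meet only in $K$, and no nontrivial $2$-torsion point of $E^1$ can lie in $L_{(p)}$. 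A short induction---if $Q \in E^1(L_{(p)})$ satisfies $2^n Q = O$ with $n \geq 1$, then $2^{n-1}Q$ is a $2$-torsion point in $L_{(p)}$ and therefore $O$---gives $E^1(L_{(p)})[2^\infty] = \{O\}$, and combining this with the previous paragraph yields $E^1(L_{(p)})_{\tor} = E^1[3] = E^1(K)$. The main subtlety is the uniform-in-$p$ complete splitting of $\fp_2$ in $L_{(p)}/K$, which rests on the pro-$2$ structure of the principal units at $\fp_2$ making any odd integer automatically a cube there.
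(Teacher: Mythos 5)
Your proof is correct, but it reaches the conclusion by a genuinely different route from the paper on the odd-torsion side. The paper argues prime by prime: for $\ell\neq 2,3$ it invokes Shimura's theorem on CM torsion fields to show that an $\ell$-torsion point generates an extension of $K$ of degree greater than $3$; for $\ell=3$ it uses N\'eron--Ogg--Shafarevich (good reduction outside $3$) plus the triviality of the class group of $K$ to force $3$-power torsion into $E^1(K)$; and for $\ell=2$ it compares ramification at $p$ (where $L_{(p)}/K$ is totally ramified but $K(E^1[2])/K$ is not). You instead collapse the entire prime-to-$2$ analysis into a single reduction argument at the inert prime $2\CO_K$: the observation that any odd integer lies in the pro-$2$ group $1+2\CO_{K,\fp_2}$ and is therefore automatically a cube in $K_{\fp_2}$ gives complete splitting of $2$ in $L_{(p)}/K$ uniformly in $p$, after which $\#E^1(\BF_4)=9$ pins the prime-to-$2$ torsion down to exactly $E^1[3]$ in one stroke. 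This is arguably cleaner and more elementary than the paper's appeal to Shimura plus N\'eron--Ogg--Shafarevich, at the cost of requiring the explicit point count and the local cube computation; note that the complete splitting really is needed, since an inert $2$ would only bound the odd torsion by $\#E^1(\BF_{64})=81$. Your treatment of the $2$-primary part is essentially the paper's disjointness argument transplanted from the prime $p$ to the prime $2$ (you contrast ramification at $\fp_2$, the paper contrasts ramification at $p$); both are valid. Finally, both your proof and the paper's rely on the external fact that $E^1(K)$ has rank $0$ (so that $E^1(K)=E^1[3]$); you at least flag this explicitly, whereas the paper uses it silently here and only states it later in Section 6.
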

\begin{proof}
The proof is simpler than Lemma 3.6 of \cite{SSY}. Let $\ell$ be a prime. Suppose $P\in E^1[\ell]$ is a torsion point of order $\ell$. By \cite[Theorem 5.5]{Shimurabook}, the field $K(P)$ contains a subfield over $K$ with Galois group $(\CO_K/\fl)^\times$, where $\fl$ is a prime ideal of $K$ above $\ell$. If $\ell\neq 2,3$,  then $[K(P):K]>3$; 

For $\ell=2$, we see that $K(E^1[2])/K$ has degree $3$,  and is unramified at $p$. Thus $$K(E^1[2])\cap L_{(p)}\subset K.$$ 

Suppose $P\in E^1(L_{(p)})[3^\infty]$. Since $E^1$ has good reductions outside $3$, $K(P)$ is unramified outside $3$, which forces that $K(P)\subset K$. We see that 
$$E^1(L_{(3,p)})[3^\infty]=E^1[3]=E^1(K).$$
\end{proof}

\begin{proposition}\label{mainthm}
Let $c$ be the point on $E^1_{min}$ defined above. If $\left[\frac{p+1}{9}\right]c\neq O, \pm (0,-1) \mod p$,
then $z$ is nontorsion.
\end{proposition}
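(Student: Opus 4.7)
The strategy is to show that $\phi_1(\phi(z)) \in E^1_{\min}(L_{(p)})$ is non-torsion; since $\phi$ and $\phi_1$ are isogenies, this implies $z$ is non-torsion. By Lemma \ref{torsion}, the torsion subgroup of $E^1_{\min}(L_{(p)})$ equals $E^1_{\min}(K)_{\mathrm{tor}} = E^1_{\min}[3]$, a set of nine points. I plan to reduce $\phi_1(\phi(z))$ modulo a prime $\fp$ of $L_{(p)}$ above $p$ and show the reduction is $\bigl[\tfrac{p+1}{9}\bigr]c$, which under the hypothesis is incompatible with any admissible torsion reduction.

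The first step is the reduction mod $p$. By Proposition \ref{LCF}(2), $p$ is totally ramified in $H_p/L_{(p)}$, so there is a unique prime $\fP$ of $H_p$ above $\fp$, with trivial residue extension. Every element of $\Gal(H_p/L_{(p)})$ therefore fixes $\fP$ and acts trivially on the residue field $\BF_{p^2}$. Combining this with the congruence $P_{\tau_0} \equiv R_{\tau_0} \pmod{p}$ from (\ref{congruence1}), the trace $\phi(z) = \mathrm{tr}_{H_p/L_{(p)}}(\phi(P_{\tau_0}))$ will yield a congruence modulo $\fP$ of the form $\frac{p+1}{9}$ times a specific explicit point. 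That explicit point will require two refinements beyond the naive $\phi_1(\phi(R_{\tau_0}))$:

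\emph{(i)} A cusp-translation coming from Shimura reciprocity: the Galois action of $\Gal(H_p/L_{(p)})$ on $P_{\tau_0}$, once lifted to matrices in the normalizer $N_\Gamma$, introduces translations by the cusp $[0]$ (which equals $(0,-12)$ on $E^3$) via the automorphism $\Phi(W)$ in Proposition \ref{action}. Summing across the trace effectively replaces $R_{\tau_0}$ by $R_{\tau_0}+(0,-12)$, and using $\phi_1(\phi((0,-12))) = (0,-1)$ produces the point $a = \phi_1(\phi(R_{\tau_0}+(0,-12)))$.

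\emph{(ii)} A correction by the torsion point $b = \phi_1(S)$ coming from the singular-reduction analysis of Theorem \ref{singularcomponent}: the natural reduction of $\phi(P_{\tau_0})$ at $\sqrt{-3}$ lies on a singular component of $E^1$, but $\phi(P_{\tau_0}) - S$ lies on the nonsingular component of the N\'eron model. Tracing on the identity component accounts for the $-b$ term.

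Combining these should yield
\[
\phi_1(\phi(z)) \;\equiv\; \left[\tfrac{p+1}{9}\right](a-b) \;=\; \left[\tfrac{p+1}{9}\right]c \pmod{\fp}.
\]

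To close the argument, I would exploit complex conjugation acting on $z$. For Heegner-type points built from a CM trace as above, one expects $z^c = [\epsilon]\cdot z$ modulo cusp/torsion for some sign $\epsilon$, forcing $\phi_1(\phi(z))$ to be either invariant or anti-invariant under $c$. If $\phi_1(\phi(z))$ were torsion, this symmetry would restrict it to the $c$-eigenspace in $E^1_{\min}[3]$ that reduces to real points, namely $\{O, (0,0), (0,-1)\} = \{O,\pm(0,-1)\}$. The proposition's hypothesis precisely excludes these reductions, so $\phi_1(\phi(z))$ cannot be torsion, and neither can $z$.

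The main technical obstacle is the precise Shimura-reciprocity bookkeeping leading to the formula $\phi_1(\phi(z)) \equiv \bigl[\tfrac{p+1}{9}\bigr]c \pmod{\fp}$: identifying explicit matrices in $N_\Gamma$ realizing the Galois action of a generator of $\Gal(H_p/L_{(p)})$, and carefully summing the resulting cusp translations in parallel with the N\'eron-model correction at $\sqrt{-3}$. A secondary subtlety is verifying that the complex-conjugation symmetry of $z$ really does pin torsion values into the $\BQ$-rational $3$-torsion.
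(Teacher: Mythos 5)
Your first half (reduce mod the totally ramified prime $p$, use $P_{\tau_0}\equiv R_{\tau_0}\bmod p$, convert the trace into multiplication by $\tfrac{p+1}{9}$, and invoke Lemma \ref{torsion} to reduce to excluding $3$-torsion values) matches the paper. But your closing step is where the argument genuinely breaks, and it is also where you diverge from the paper in a way that does not work. You propose to pin down the possible torsion value of $\phi_1(\phi(z))$ by a complex-conjugation symmetry, claiming this forces it into $\{O,(0,0),(0,-1)\}$. That set is the $(+1)$-eigenspace of complex conjugation on $E^1_{min}[3]$ (the real $3$-torsion). For a Heegner point of this type the expected relation is $z^{c}=[-1]z+(\text{cusp})$ (compare $\Phi(W)(P)=[-1]P+[0]$ in Proposition \ref{action}), i.e.\ \emph{anti}-invariance, and the anti-invariant $3$-torsion is $\{O,(-1,\omega),(-1,\omega^2)\}=\{O,\pm b\}$: indeed $\overline{(-1,\omega)}=(-1,\omega^2)=-(-1,\omega)$ on $y^2+y=x^3$. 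So conjugation cannot exclude $\phi_1(\phi(z))=\pm b$ (up to the cusp translate), and the hypothesis $[\tfrac{p+1}{9}]c\neq O,\pm(0,-1)\bmod p$ does not exclude those values either. Your symmetry step therefore fails exactly on the torsion points that actually threaten the argument.

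The paper's mechanism for this step is $3$-adic, not archimedean, and this is the whole point of the $p\equiv 8\bmod 9$ case: Theorem \ref{singularcomponent} together with Proposition \ref{singular} shows $\phi(P_{\tau_0})-S$ reduces to the identity component of the N\'eron model at every $\wp\mid 3$; since the identity component is a Galois-stable subgroup, $\phi(z)-[\tfrac{p+1}{9}]S$ also lies on it, so if $\phi(z)$ is $3$-torsion it must equal $[\tfrac{p+1}{9}]S$ plus an element of $E^1(H_p)/E^1_0(H_p)$-trivial $3$-torsion, i.e.\ one of $[\tfrac{p+1}{9}]S$, $[\tfrac{p+1}{9}]S\pm(0,-12)$. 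The hypothesis on $[\tfrac{p+1}{9}]c=[\tfrac{p+1}{9}](a-b)$ is precisely what makes this list disjoint from the mod-$p$ congruence class of $\phi(z)$. Relatedly, you misattribute the role of the $-b$ term: it does not arise from any correction to the mod-$p$ reduction of the trace (the reduction mod $\fP$ sees only $R_{\tau_0}$), but from the comparison with $[\tfrac{p+1}{9}]S$ forced by the component-group constraint at $3$. To repair your proof you must replace the complex-conjugation step with this component-group argument at the primes above $3$.
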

\begin{proof}
By Theorem \ref{singularcomponent},
we can write $\phi(P_{\tau_0})=P'+S$ where $P' \mod \wp$ is nonsingular for any prime $\wp$ above $3$. Now
\begin{equation}\label{P+S}
\phi(z)=\tr_{H_p/L_{(p)}}\phi(P')+\left[\frac{p+1}{9}\right]S. 
\end{equation}
Assume $\left[\frac{p+1}{9}\right]c\neq O,\pm (0,-1) \mod p$, then $\left[\frac{p+1}{9}\right](R_{\tau_0}-S)\neq O,\pm (0,-12)\mod p$. So 
\begin{equation}\label{onethree}
\phi(z)\equiv \left[\frac{p+1}{9}\right]R_{\tau_0}\neq \left[\frac{p+1}{9}\right]S, \left[\frac{p+1}{9}\right]S\pm (0,-12)\mod p. 
\end{equation}
If $z$ is torsion, then $\phi(z)$ is torsion. By Lemma \ref{torsion}, $\phi(z)$ is $3$-torsion. By (\ref{P+S}), $\phi(z)$ is on the same singular component with $\left[\frac{p+1}{9}\right]S$ for any $\wp$. Then $\phi(z)$ should be one of $\left[\frac{p+1}{9}\right]S$ and $\left[\frac{p+1}{9}\right]S\pm (0,-12)$ which contradicts (\ref{onethree}). So $z$ should be nontorsion.
\end{proof}
We want to translate the conditions in Proposition \ref{mainthm} into conditions about polynomials. For this, we need some preparation.
For a general point $(x,y)\in E^1_{min}(\bar K)$, by the addition formula on \cite[Page 54]{silvermanbook1}(using $\sqrt{-3}=\omega-\omega^2$) and \cite[Exercise 3.7]{silvermanbook1}, we have
\begin{equation}\label{add1}
[\sqrt{-3}](x,y)=\lrb{\frac{x^3+1}{-3x^2},\ -\frac{(2y+1)(x^3+1)}{3\sqrt{-3}x^3}-\frac{-y+\omega}{\sqrt{-3}}-1}
\end{equation}
and
\begin{equation}\label{add2}
[3](x,y)=\lrb{\frac{x^9-24x^6+ 3x^3+1}{9x^2(x^3+1)^2},\ \frac{y\omega_3(x)}{\psi_3(x)}},
\end{equation}
where $\omega_3(x)$ and $\psi_3(x)$ are polynomials in $x$. Assume $P_1=(x_1,y_1)\in E^1_{min}(\bar K)$ satisfies $[\sqrt{-3}]P_1=c$. By (\ref{add1}), we know that $x_1$ is the root of the polynomial
\[x^3-3\frac{(\sqrt[3]{3})^2 + 3\sqrt[3]{3}+1}{4}x^2+1=0.\]
Using Sagemath we can compute that 
$P_1=(\sqrt[3]{3}-1, -(\sqrt[3]{3})^2+1)+[i](0,0)$ for $i=0,1,2$ where $(0,0)$ is a $\sqrt{-3}$-torsion point. 

\begin{proposition}\label{more}
For $n\in \BN$, $c$ is divisible by $3^n\sqrt{-3}$ in $E^1_{min}(\BF_{p^2})$ if and only if $c$ is divisible by $3^{n+1}$ in $E^1_{min}(\BF_{p^2})$.
\end{proposition}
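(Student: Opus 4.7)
I plan to reduce the statement to the assertion that the $\sqrt{-3}$-adic valuation of $c$ in the CM module $E^1_{min}(\BF_{p^2})$ is always \emph{even}. The direction ``$3^{n+1}\mid c \Rightarrow 3^n\sqrt{-3}\mid c$'' is immediate from the factorisation $3=-(\sqrt{-3})^2$ in $\CO_K$; conversely, once $v_{\sqrt{-3}}(c)$ is known to be even, ``$(\sqrt{-3})^{2n+1}\mid c$'' automatically forces ``$(\sqrt{-3})^{2n+2}\mid c$'', which is precisely the claim.

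The first step is to locate $c$ with respect to the Frobenius. Using $\sqrt[3]{3}\in\BF_p$ (Lemma~\ref{prime}), $(\sqrt{-3})^p=-\sqrt{-3}$, and $\omega^p=\omega^2$ (both from $p\equiv 2\bmod 3$), a direct computation on the explicit coordinates of $a=(-1/\sqrt[3]{3},(-3-\sqrt{-3})/6)$ and $b=(-1,\omega)$ shows $\Frob_p(a)=-a$ and $\Frob_p(b)=-b$ via the involution $-P=(x,-1-y)$ on $E^1_{min}$. Hence $\Frob_p(c)=-c$. Because $E^1_{min}$ is supersingular at $p$ with $a_p=0$, one has $E^1_{min}(\BF_{p^2})=E^1_{min}[p+1]$ and $\Frob_p^2=-p\equiv 1\bmod p+1$, so $\Frob_p$ is an involution with eigenspace decomposition $E^1_{min}(\BF_{p^2})=V_+\oplus V_-$, $V_+=E^1_{min}(\BF_p)$, and the computation puts $c\in V_-$.

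Set $k=v_3(p+1)\ge 2$. Because the Tate module $T_3E^1_{min}$ is free of rank one over $\CO_{K,3}$, the cyclic $\CO_K$-module $E^1_{min}(\BF_{p^2})[3^\infty]$ is canonically $\CO_{K,3}/(\sqrt{-3})^{2k}$, and under this identification $E^1_{min}[\sqrt{-3}]$ is exactly the set of elements of $\sqrt{-3}$-adic valuation $\ge 2k-1$. The Weil pairing together with $3\nmid p-1$ prevents $E^1_{min}[3]\subset E^1_{min}(\BF_p)$, so $V_+[3^\infty]$ is $\BZ$-cyclic of order $3^k$; and since $E^1_{min}[\sqrt{-3}]=\{O,(0,0),(0,-1)\}$ is $\BQ$-rational hence inside $V_+$, it must be the unique order-$3$ subgroup of $V_+[3^\infty]$. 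If $g$ generates $V_+[3^\infty]$, then $3^{k-1}g\in E^1_{min}[\sqrt{-3}]\setminus\{O\}$ has valuation $2k-1$, which forces $v_{\sqrt{-3}}(g)=1$; every nonzero $mg$ then has odd valuation $2v_3(m)+1$.

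The mirror computation for $V_-$ completes the argument: the order-$3$ subgroup $V_-[3]$ is the complement of $E^1_{min}[\sqrt{-3}]$ inside $E^1_{min}[3]$, whose nonzero elements have valuation $2k-2$, so a generator of the cyclic $V_-[3^\infty]$ has valuation $0$ and all of its elements have even $\sqrt{-3}$-valuation. Since $c\in V_-$, $v_{\sqrt{-3}}(c)$ is even, which is the content of the proposition. I expect the main obstacle to be matching the Frobenius eigenspace decomposition with the $\sqrt{-3}$-adic filtration on $\CO_{K,3}/(\sqrt{-3})^{2k}$: once the containment $E^1_{min}[\sqrt{-3}]\subset V_+$ is used to pin down $v_{\sqrt{-3}}(g)=1$, the parity conclusion follows automatically.
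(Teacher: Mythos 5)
Your proof is correct, but it takes a genuinely different route from the paper's. The paper argues by contradiction with a specific torsion point: writing $c=[3^n\sqrt{-3}]d$ and assuming $3^{n+1}\nmid c$, it shows $[\frac{p+1}{3^{n+1}}]c$ would be a nonzero $\sqrt{-3}$-torsion point, hence equal to $(0,0)$ or $(0,-1)$ and in particular $\BF_p$-rational; on the other hand $[\frac{p+1}{3^{n+1}}]c=[\sqrt{-3}]\bigl([\frac{p+1}{3^{n+1}}]P_1\bigr)$ with $[\frac{p+1}{3^{n+1}}]P_1\in E^1_{min}(\BF_p)$, and the explicit formula (\ref{add1}) for $[\sqrt{-3}]$ shows such an image lies outside $E^1_{min}(\BF_p)$. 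Your argument isolates the same underlying mechanism --- that $\Frob_p$ anti-commutes with $[\sqrt{-3}]$ since $p$ is inert --- but packages it as a parity statement: on the cyclic $\CO_{K,3}$-module $E^1_{min}(\BF_{p^2})[3^\infty]\cong\CO_{K,3}/(\sqrt{-3})^{2k}$, every nonzero element of $E^1_{min}(\BF_p)[3^\infty]$ has odd $\sqrt{-3}$-valuation (pinned down by $E^1_{min}[\sqrt{-3}]\subset E^1_{min}(\BF_p)$) and every element of the $(-1)$-eigenspace has even valuation, while your coordinate computation (using $\sqrt[3]{3}\in\BF_p$, $\sqrt{-3}^{\,p}=-\sqrt{-3}$, $\omega^p=\omega^2$) correctly places $c$ in the latter. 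This buys a stronger, reusable fact (the full valuation parity of the two Frobenius eigenspaces, which immediately gives the statement for all $n$ at once) at the cost of more structural setup; the paper's version is shorter and leans only on the explicit multiplication-by-$\sqrt{-3}$ formula. Two small points to tighten: the decomposition $V_+\oplus V_-$ is only valid on the odd-order (here, $3$-primary) part of $E^1_{min}(\BF_{p^2})$, since $2\mid p+1$ --- which is all you use, but should be said explicitly; and one should note that divisibility of $c$ by powers of $\sqrt{-3}$ depends only on its $3$-primary component, which is the element whose valuation the parity argument controls.
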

\begin{proof}
The if part is obvious. We prove the only if part. Let $v$ be the $3$ order of $p+1$. It is well-known that in our case 
\begin{equation}\label{finite1}
E^1_{min}(\BF_{p})\simeq\BZ/(p+1)\BZ\simeq(\BZ/((p+1)/3^v)\BZ)\times (\BZ/3^v\BZ)
\end{equation}
and
\begin{equation}\label{finite}
E^1_{min}(\BF_{p^2})\simeq\BZ/(p+1)\BZ\times\BZ/(p+1)\BZ\simeq(\BZ/((p+1)/3^v)\BZ)^2\times (\BZ/3^v\BZ)^2,
\end{equation} 
see for example \cite{Wittmann} and the reference therein. As a result, we have
\begin{equation}\label{finite2}
E^1_{min}(\BF_{p^2})/E^1_{min}(\BF_{p})\simeq\BZ/(p+1)\BZ.
\end{equation}

We also have 
\begin{equation}\label{torsion1}
E^1_{min}(\BF_{p^2})[3]=[\BZ/3\BZ](0, -1)\oplus [\BZ/3\BZ](-1,\omega)
\end{equation}
where $(0,-1)$ is $\sqrt{-3}$-torsion and $(-1,\omega)$ is primitive $3$-torsion.
Let $c=[3^n\sqrt{-3}]d$ in $E^1_{min}(\BF_{p^2})$. If $c$ is not divisible by $3^{n+1}$, then $d$ is not divisible by $\sqrt{-3}$. Then by (\ref{finite}) and (\ref{torsion1}), $[\frac{p+1}{3}]d=\pm(-1,\omega)$. As a result, 
$[\frac{p+1}{3^{n+1}}]c\neq 0$ while $[\sqrt{-3}][\frac{p+1}{3^{n+1}}]c=0$. But this is impossible. The reason is as follows: Since $P_1\mod p\in E^1_{min}(\BF_p)$, $[\frac{p+1}{3^{n+1}}]P_1\in E^1_{min}(\BF_p)$. By (\ref{add1}), 
$[\frac{p+1}{3^{n+1}}]c=[\sqrt{-3}][\frac{p+1}{3^{n+1}}]P_1$ belongs to $E^1_{min}(\BF_{p^2})\backslash E^1_{min}(\BF_p)$.
\end{proof}


Now, we investigate the $9$-divisibility of $c$ in $E^1_{min}(\BF_{p^2})$. Let $$D(x)=x^9-24x^6+ 3x^3+1-9(\sqrt[3]{3}-1)x^2(x^3+1)^2,$$
which is irreducible over $L_{(3)}$.


\begin{proposition}\label{iff}
$\left[\frac{p+1}{9}\right]c\neq O,\pm(0,-1) \mod p$ if and only if $c$ cannot be divided by $9$ in $E^1_{min}(\BF_{p^2})$.
\end{proposition}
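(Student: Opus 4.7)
The plan is to rewrite the three-term condition $[\tfrac{p+1}{9}]c\in\{O,\pm(0,-1)\}$ as a single $\sqrt{-3}$-adic divisibility condition on $c$ inside the $3$-primary part of $E^1_{min}(\BF_{p^2})$, and then invoke Proposition \ref{more}.

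As the first step, I would identify $\{O,\pm(0,-1)\}$ with the kernel subgroup $E^1_{min}[\sqrt{-3}]$. Formula \eqref{add1} gives the $x$-coordinate of $[\sqrt{-3}](x,y)$ as $(x^3+1)/(-3x^2)$, so $\ker[\sqrt{-3}]$ consists of $O$ together with the affine points of $x$-coordinate $0$; the equation $y^2+y=x^3$ then restricts $y$ to $\{0,-1\}$, and $-(0,-1)=(0,0)$ under the group law on $E^1_{min}$. Consequently the condition in the proposition is equivalent to $[\tfrac{(p+1)\sqrt{-3}}{9}]c=O$ in $E^1_{min}(\BF_{p^2})$.

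Next I would exploit the $\BZ[\omega]$-module structure. Set $v:=v_3(p+1)\geq 2$ and $\pi:=\sqrt{-3}$. Since $E^1$ has CM by $\BZ[\omega]$ and supersingular reduction at $p$, the $3$-primary part $M$ of $E^1_{min}(\BF_{p^2})$ is free of rank one over the local ring $\BZ[\omega]/(3^v)=\BZ[\omega]/(\pi^{2v})$; this is compatible with the abelian group decomposition $(\BZ/3^v)^2$ from \eqref{finite}. The prime-to-$3$ component of $c$ has order dividing $\tfrac{p+1}{3^v}$, which in turn divides $\tfrac{p+1}{9}=3^{v-2}\cdot\tfrac{p+1}{3^v}$, so it is annihilated by $[\tfrac{(p+1)\sqrt{-3}}{9}]$; the question therefore reduces to the $3$-primary component $c_3\in M$. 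Writing $\tfrac{(p+1)\sqrt{-3}}{9}=\tfrac{p+1}{3^v}\cdot\pi^{2v-3}$ and observing that $\tfrac{p+1}{3^v}$ is a unit in $\BZ[\omega]/(\pi^{2v})$, the endomorphism $[\tfrac{(p+1)\sqrt{-3}}{9}]$ acts on $M$ as multiplication by $u\pi^{2v-3}$ for some unit $u$. Its kernel is exactly $\pi^3 M$, so $[\tfrac{(p+1)\sqrt{-3}}{9}]c=O$ if and only if $c_3\in\pi^3 M$, i.e., $c$ is divisible by $3\sqrt{-3}$ in $E^1_{min}(\BF_{p^2})$.

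Finally, Proposition \ref{more} with $n=1$ translates $3\sqrt{-3}$-divisibility of $c$ into $9$-divisibility, and the claim follows by negation. The argument is purely formal once Proposition \ref{more} is available, so no substantive obstacle appears at this stage; the genuine content sits in that earlier proposition together with the $\mod\sqrt{-3}$ analysis of $\phi(P_{\tau_0})$ in the previous section.
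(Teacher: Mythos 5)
Your proof is correct, and it reaches the conclusion by a genuinely different (and arguably cleaner) route than the paper's. The paper's proof chooses a generator $e$ of $E^1_{min}(\BF_{p^2})/E^1_{min}(\BF_{p})\cong \BZ/(p+1)\BZ$, writes $c\equiv ne\bmod p$, and argues on the integer $n$: since $c$ is always divisible by $3$, and since $[m]\left[\frac{p+1}{3}\right]e$ with $(m,3)=1$ lies outside $E^1_{min}(\BF_p)$ while $O$ and $\pm(0,-1)$ have coordinates in $\BF_p$, the exclusion holds exactly when $9\nmid n$. You instead (i) identify $\{O,\pm(0,-1)\}$ with $\ker[\sqrt{-3}]$, which converts the three-point exclusion into the single equation $\left[\frac{(p+1)\sqrt{-3}}{9}\right]c=O$, and (ii) read off the kernel of that endomorphism from the free rank-one $\BZ[\omega]/(3^v)$-module structure of the $3$-primary part, obtaining divisibility of $c$ by $3\sqrt{-3}$; both proofs then invoke Proposition \ref{more} to pass between $\sqrt{-3}$-power and $3$-power divisibility (the paper with $n=0$, you with $n=1$). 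What your approach buys: it does not require $c$ to lie in the cyclic subgroup $\langle e\rangle$, a point the paper leaves implicit (it holds because $\Frob_p$ conjugates the CM action and fixes $P_1$, so $\Frob_p\, c=-c$), and it makes structurally transparent why the exceptional set is exactly the $\sqrt{-3}$-torsion. What the paper's version buys is that it stays inside elementary abelian-group arithmetic once the structure results are granted. The genuinely arithmetic inputs --- the structures (\ref{finite1})--(\ref{torsion1}) and Proposition \ref{more} --- are the same in both arguments, so your proof is a valid substitute.
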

\begin{proof}Recall that we list the structure of $E^1_{min}(\BF_{p^2})$ in (\ref{finite1})-(\ref{torsion1}).
Let $e\in E^1_{min}(\BF_{p^2})\backslash E^1_{min}(\BF_{p})$ be the generator of $E^1_{min}(\BF_{p^2})/ E^1_{min}(\BF_{p})$. Then $e$ is of order $p+1$ and $c\mod p=ne$ for some integer $n$. Then $[\frac{p+1}{3}]e$ also belongs to $E^1_{min}(\BF_{p^2})\backslash E^1_{min}(\BF_{p})$ and should be of primitive order $3$. By Proposition \ref{more} and the discussion before Proposition \ref{aD}, $c\mod p$ can always be divided by $3$. If $c\mod p$ cannot be divided by $9$, we can assume $n=3m$ with $(3, m)=1$. Then $[\frac{p+1}{9}]c= [m][\frac{p+1}{3}]e$ is primitive of order $3$ which cannot be $O$ or $\pm(0,-1)$. Conversely, if $c\mod p$ is divisible by $9$, then $9\mid n$ and $[\frac{p+1}{9}]c\equiv [p+1]\left[\frac{n}{9}\right]e=O\mod p$. This finishes the proof of the proposition.
\end{proof}

\begin{proposition}\label{aD}
$c\mod p$ is divisible by $9$ if and only if $D(x) \mod p$ has solutions in $\BF_p$.
\end{proposition}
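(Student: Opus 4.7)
The plan is to identify $D(x)$ with the polynomial whose roots are exactly the $x$-coordinates of the preimages under $[3]$ of the point $P_1^{(0)}:=(\sqrt[3]{3}-1,\,-(\sqrt[3]{3})^2+1)$, via the triplication formula (\ref{add2}): explicitly, $D(\alpha)=0$ is the condition that $[3](\alpha,y_\alpha)=P_1^{(0)}$ where $y_\alpha^2+y_\alpha=\alpha^3$. By Lemma \ref{prime}, $P_1^{(0)}\in E^1_{min}(\BF_p)$, and by construction $[\sqrt{-3}]P_1^{(0)}=c$. Throughout I will use Proposition \ref{more} to replace divisibility of $c$ by $9$ in $E^1_{min}(\BF_{p^2})$ by the equivalent condition of divisibility by $3\sqrt{-3}$.

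The direction $(\Leftarrow)$ is immediate: given $\alpha\in\BF_p$ with $D(\alpha)=0$, solve $y^2+y=\alpha^3$ to obtain $Q:=(\alpha,y_\alpha)\in E^1_{min}(\BF_{p^2})$ with $[3]Q=P_1^{(0)}$; then $c=[\sqrt{-3}]P_1^{(0)}=[3\sqrt{-3}]Q$ is divisible by $3\sqrt{-3}$, hence by $9$.

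For $(\Rightarrow)$, suppose $c=[3\sqrt{-3}]R$ with $R\in E^1_{min}(\BF_{p^2})$. Then $[\sqrt{-3}](P_1^{(0)}-[3]R)=0$, so $P_1^{(0)}=[3]R+T$ for some $T\in E^1_{min}[\sqrt{-3}]\subset E^1_{min}[3]$. The crucial observation, where the congruence $p\equiv 8\mod 9$ enters, is that $E^1_{min}[9]\subset E^1_{min}(\BF_{p^2})$: since $E^1_{min}/\BF_p$ is supersingular ($p\equiv 2\mod 3$) with $a_p=0$, Frobenius satisfies $\Frob_p^2=[-p]$, and $-p\equiv 1\mod 9$ forces $\Frob_p^2$ to act trivially on $E^1_{min}[9]$. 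Hence there exists $S\in E^1_{min}[9]\cap E^1_{min}(\BF_{p^2})$ with $[3]S=T$, so that $P_1^{(0)}=[3](R+S)$ with $R+S\in E^1_{min}(\BF_{p^2})$. Consequently all nine preimages $(R+S)+T'$, $T'\in E^1_{min}[3]$, lie in $E^1_{min}(\BF_{p^2})$ (using also $E^1_{min}[3]\subset E^1_{min}[9]\subset E^1_{min}(\BF_{p^2})$), and their $x$-coordinates are precisely the roots of $D(x)$.

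Finally, since $P_1^{(0)}\in E^1_{min}(\BF_p)$, $\Frob_p$ permutes these nine preimages; each Frobenius orbit has size $1$ or $2$ because the preimages lie in $E^1_{min}(\BF_{p^2})$. As $9$ is odd, at least one orbit must be a singleton $\{Q_0\}$, and then $Q_0\in E^1_{min}(\BF_p)$, so its $x$-coordinate is an $\BF_p$-rational root of $D$. The main obstacle is the reverse direction, and specifically the step $E^1_{min}[9]\subset E^1_{min}(\BF_{p^2})$: this is precisely what allows the $\sqrt{-3}$-torsion discrepancy $T$ to be absorbed into an honest $3$-divisor of $P_1^{(0)}$ inside $E^1_{min}(\BF_{p^2})$, and it is exactly where the hypothesis $p\equiv 8\mod 9$ is used. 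The subsequent parity/Frobenius argument is then what upgrades the $\BF_{p^2}$-rational solution to an $\BF_p$-rational one.
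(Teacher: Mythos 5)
Your proof is correct. The first half coincides with the paper's: both reduce $9$-divisibility of $c$ to $3\sqrt{-3}$-divisibility via Proposition \ref{more}, hence to $3$-divisibility of $P_1^{(0)}=(\sqrt[3]{3}-1,-(\sqrt[3]{3})^2+1)$ in $E^1_{min}(\BF_{p^2})$, hence to $D$ having a root in $\BF_{p^2}$. You absorb the $\sqrt{-3}$-torsion discrepancy $T$ by lifting it inside $E^1_{min}[9]\subset E^1_{min}(\BF_{p^2})$ (justified by $\Frob_p^2=[-p]$ and $-p\equiv 1\bmod 9$), while the paper instead notes from the group structure (\ref{finite}) that the $\sqrt{-3}$-torsion point $(0,0)$ is itself divisible by $3$; these are two phrasings of the same use of $9\mid p+1$. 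The genuine divergence is the descent from $\BF_{p^2}$ to $\BF_p$. The paper argues globally: $D$ has a real algebraic-integer root $x_0$, and its reduction modulo a prime above $p$ must lie in $\BF_p$ because $\BF_{p^2}=\BF_p(\sqrt{-3})$ --- an archimedean argument that is rather terse as written. Your argument stays entirely over finite fields: Frobenius permutes the nine preimages under $[3]$ of the $\BF_p$-rational point $P_1^{(0)}$, every orbit has size at most $2$ because the preimages lie in $E^1_{min}(\BF_{p^2})$, and oddness of $9$ forces a fixed point. This is cleaner and self-contained. The only step worth spelling out is the identification of the roots of $D\bmod p$ with the $x$-coordinates of these preimages: strictly, $D(\alpha)=0$ means $[3](\alpha,y_\alpha)=\pm P_1^{(0)}$, but since $x(Q)=x(-Q)$ on $y^2+y=x^3$ the two sign choices give the same set of nine $x$-values, and one should also check that clearing the denominator $9x^2(x^3+1)^2$ in (\ref{add2}) introduces no spurious roots (it does not, since $D(0)=1$ and $D(\alpha)=-27$ when $\alpha^3=-1$).
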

\begin{proof}
By Proposition \ref{more}, we know that $c\mod p$ is divisible by $9$ if and only if $c$ is divisible by $3\sqrt{-3}$ in $E^1_{min}(\BF_{p^2})$, i.e., if and only if $P_1$ is divisible by $3$ in $E^1_{min}(\BF_{p^2})$. Since $(0,0)$ is $\sqrt{-3}$-torsion and $9\mid p+1$, $(0,0)$ can always be divided by $3$ in $E^1_{min}(\BF_{p^2})$ by the structure in (\ref{finite}). Then $P_1$ is divisible by $3$ in $E^1_{min}(\BF_{p^2})$ if and only if $(\sqrt[3]{3}-1, -(\sqrt[3]{3})^2+1)$ is divisible by $3$ in $E^1_{min}(\BF_{p^2})$, i.e., if and only if $D(x)$ has solutions in $\BF_{p^2}$ by (\ref{add2}). We will prove $D(x)$ has solutions in $\BF_{p^2}$ if and only if $D(x)$ has solutions in $\BF_{p}$.

The direction `$D(x)$ has solutions in $\BF_{p}$ $\Rightarrow$ $D(x)$ has solutions in $\BF_{p^2}$' is clear. Now we prove the converse direction. So, we assume $D(x)$ has a solution in $\BF_{p^2}$. Since $E^1_{min}[3]\hookrightarrow E^1_{min}(\BF_{p^2})$, if $D(x)$ has one solution in $\BF_{p^2}$, then $D(x)$ splits totally in $\BF_{p^2}$, since different 3-division point of $P_1$ only differ by an element in $E^1_{min}[3]$. Because $D(x)$ always has a solution $x_0$ in $\BR$ which is an algebraic integer, we see $x_0 \mod p$ will give a solution of  $D(x) \mod p$ in $\BF_p$ since $\BF_{p^2}=\BF_p(\sqrt{-3})$.

\end{proof}

\begin{thm}\label{mc}
If $D(x)$ does not have solutions in $\BF_p$, then $z$ is nontorsion. 
\end{thm}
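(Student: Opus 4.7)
The plan is to observe that Theorem \ref{mc} is essentially the concatenation of the three propositions immediately preceding it, so the main task is to chain them together cleanly and verify that the contrapositive direction is what is actually needed.

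First I would assume, for the proof, that $D(x)$ has no root in $\BF_p$. The goal is to conclude that the Heegner point $z$ is nontorsion. By Proposition \ref{mainthm}, it suffices to show that $\bigl[\tfrac{p+1}{9}\bigr]c \not\equiv O, \pm(0,-1) \pmod p$ as points on $E^1_{\min}$. By Proposition \ref{iff}, this in turn is equivalent to the statement that $c \bmod p$ is not divisible by $9$ in $E^1_{\min}(\BF_{p^2})$.

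Second, I would apply Proposition \ref{aD}, which says precisely that $c \bmod p$ is divisible by $9$ in $E^1_{\min}(\BF_{p^2})$ if and only if $D(x) \bmod p$ has a root in $\BF_p$. Since by hypothesis $D(x)$ has no root in $\BF_p$, Proposition \ref{aD} gives that $c \bmod p$ is not divisible by $9$. Reversing the chain through Propositions \ref{iff} and \ref{mainthm} then yields the nontriviality of $z$.

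Since every intermediate equivalence has already been established in the paper, there is no genuine obstacle at this stage; the only thing to be careful about is to state the contrapositive correctly in invoking Proposition \ref{aD} (namely, to use ``not divisible by $9$ $\iff$ no root in $\BF_p$''), and to remember that ``nontorsion'' is really obtained from Proposition \ref{mainthm} via the negation of the exceptional set $\{O, \pm(0,-1)\}$ rather than just $O$. Thus the proof reduces to a one-line syllogism:
\[
D(x) \text{ has no root in } \BF_p
\;\stackrel{\text{Prop.\ \ref{aD}}}{\Longrightarrow}\;
9 \nmid c
\;\stackrel{\text{Prop.\ \ref{iff}}}{\Longrightarrow}\;
\bigl[\tfrac{p+1}{9}\bigr]c \neq O,\pm(0,-1)
\;\stackrel{\text{Prop.\ \ref{mainthm}}}{\Longrightarrow}\;
z \text{ is nontorsion.}
\]
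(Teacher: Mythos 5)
Your proof is correct and is exactly the paper's argument: the paper's own proof of Theorem \ref{mc} is the one-line citation of Propositions \ref{mainthm}, \ref{iff} and \ref{aD}, chained in precisely the order you describe. Nothing is missing.
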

\begin{proof}
The theorem follows from Proposition \ref{mainthm}, Proposition \ref{iff}  and Proposition \ref{aD}.
\end{proof}

Let $\sigma_p\in\Gal(K(\sqrt[3]{p})/K)$ such that $\sigma_p(\sqrt[3]{p})=\omega\sqrt[3]{p}$ and let
\begin{equation}\label{R1}
z_1=z+[\omega^2]z^{\sigma_{p}}+[\omega]z^{\sigma_{p}^2},\ \ \
z_2=z+[\omega]z^{\sigma_{p}}+[\omega^2]z^{\sigma_{p}^2}.
\end{equation}
Then 
\begin{equation}\label{z1z2}
z_1^{\sigma_{p}}=[\omega]z_1,\ \ \ z_2^{\sigma_{p}}=[\omega^2]z_2.
\end{equation}
Let $z_t=z+z^{\sigma_{p}}+z^{\sigma_{p}^2}$. Since $(z_t)^{\sigma_{p}}=z_t$, $z_t$ corresponds to a point in $E^1(K)$ which should be torsion since $E^1(K)$ is of rank $0$. As a result, $z_t$ is torsion and in fact $0$ since $z_t=\left[\frac{p+1}{3}\right]\phi_1^{-1}(c)\mod p$ is $0$ (recall that $c$ can always be divided by $3$ in $E_{min}^1(\BF_{p^2}$)). So we have
\begin{equation}\label{sumz}
z_1+z_2=[3]z.
\end{equation}
\begin{thm}
Let $p\equiv 8\mod 9$ be a prime.  If $D(x)$ does not have solutions in $\BF_p$, then at least one of $p$ and $p^2$ can be written as the sum of two rational cubes.
\end{thm}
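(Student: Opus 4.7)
The plan is to combine Theorem~\ref{mc} with a standard descent argument for the cubic twist family of $E_1$. The goal is to show that the nontriviality of the Heegner point $z$ produces a nontorsion $\BQ$-rational point on at least one of $E_p$ or $E_{p^2}$, which is equivalent to $p$ or $p^2$ being a cube sum.

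First, under the hypothesis that $D(x)$ has no root in $\BF_p$, Theorem~\ref{mc} gives that $z \in E^3(L_{(3,p)})$ is nontorsion. Since the torsion subgroup of $E^3(L_{(3,p)})$ is finite, $[3]z$ is also nontorsion, and by identity (\ref{sumz}) we have $[3]z = z_1 + z_2$. Therefore at least one of $z_1, z_2$ must be nontorsion.

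Next, I would interpret the eigenspace relations (\ref{z1z2}) via the descent theory for cubic twists of CM elliptic curves. The identity $z_1^{\sigma_p} = [\omega]z_1$ places $z_1$ in the $\omega$-eigenspace of $\sigma_p$ acting on $E^3(L_{(3,p)})$. Under the Kummer-theoretic isomorphism determined by $\sqrt[3]{p}$, this eigenspace corresponds to $K$-rational points on the cubic twist of $E^3$ by $\sqrt[3]{p}$, which (after the standard $3$-isogeny between $E^n$ and $E_n$) gives a $K$-rational point on $E_p$; similarly, $z_2$ in the $\omega^2$-eigenspace corresponds to a $K$-rational point on $E_{p^2}$. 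To descend from $K$ to $\BQ$ I would use complex conjugation $\tau$, which swaps $\omega \leftrightarrow \omega^2$ and conjugates $\sigma_p$ to $\sigma_p^{-1}$. A direct computation — using that $z$ itself arises as a trace of the CM point $P_0$ and so has controlled behavior under $\tau$ — should show that the construction of $z_1$ and $z_2$ is $\tau$-equivariant in such a way that they descend to $\BQ$-rational points on $E_p$ and $E_{p^2}$ respectively. Since the $3$-isogeny and the descent isomorphism preserve the property of being nontorsion, a nontorsion $z_i$ produces a nontorsion $\BQ$-rational point on the corresponding curve, whence $p$ or $p^2$ is a cube sum.

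The main obstacle is carrying out the descent bookkeeping carefully: identifying each eigenspace with $K$-points on the correct cubic twist, and then descending to $\BQ$ via complex conjugation while tracking signs and characters so that one lands in $E_p(\BQ)$ or $E_{p^2}(\BQ)$ rather than in the $(-3)$-quadratic twist component of $E_p(K)$ or $E_{p^2}(K)$. This kind of CM descent is standard (cf.\ Dasgupta--Voight for the $p \equiv 4, 7 \pmod 9$ case), but the present inert setting requires some adaptation, and the precise identification of which of $z_1, z_2$ corresponds to $E_p$ versus $E_{p^2}$ is why the theorem only asserts that \emph{at least one} of $p, p^2$ is a cube sum.
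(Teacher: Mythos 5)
Your first two steps coincide exactly with the paper's argument: Theorem \ref{mc} together with (\ref{sumz}) gives that at least one of $z_1,z_2$ is nontorsion, and the eigenvalue relations (\ref{z1z2}) are precisely what is used to twist $z_1$ and $z_2$ into points of $E^p(K)$ and $E^{p^2}(K)$. The divergence, and the gap, is in your final descent from $K$ to $\BQ$. You propose to show that $z_1$ and $z_2$ are themselves equivariant under complex conjugation $\tau$ and hence descend to $\BQ$-rational points of $E_p$ and $E_{p^2}$. This is the step you yourself flag as ``the main obstacle,'' and it is not merely bookkeeping: $\tau$ sends $\omega\mapsto\omega^2$ and conjugates $\sigma_p$ to $\sigma_p^{-1}$, so $\tau$-invariance of $z_1$ would essentially require $\bar z=z$; but $z$ lives in $L_{(3,p)}=K(\sqrt[3]{3},\sqrt[3]{p})$, which is not totally real, and there is no reason the trace of the CM point $P_{\tau_0}$ is fixed by complex conjugation. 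The twisted point could perfectly well lie in the $\tau$-anti-invariant (i.e.\ $(-3)$-quadratic-twist) part of $E^p(K)$, which is exactly the failure mode you mention but do not rule out.

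The paper sidesteps this entirely: it invokes the standard identity $\rk_{\BZ}E^p(\BQ)=\rk_{\CO_K}E^p(K)$ (and likewise for $E^{p^2}$), which follows from semilinear Galois descent applied to the $K$-vector space $E^p(K)\otimes\BQ$ on which $\tau$ acts semilinearly through the CM action. With this identity, a single nontorsion point of $E^p(K)$ forces $\rk_{\BZ}E^p(\BQ)\geq 1$, with no need for that particular point to be $\BQ$-rational. You should replace your explicit $\tau$-equivariance computation by this rank identity (or prove it via Galois descent); as written, the last step of your argument is not justified and, for the specific points $z_1,z_2$, may well be false.
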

\begin{proof}
By (\ref{z1z2}), $z_1$ and $z_2$ can be twisted to $E^p(K)$ and $E^{p^2}(K)$ respectively. 
But at least one of $z_1$ and $z_2$ is nontorsion by Theorem \ref{mc} and (\ref{sumz}). Then the theorem follows from the well-known fact that ${\rm rank}_{\BZ}E^p(\BQ)={\rm rank}_{\CO_K}E^p(K)$ and ${\rm rank}_{\BZ}E^{p^2}(\BQ)={\rm rank}_{\CO_K}E^{p^2}(K)$, for a proof of this fact see \cite[Lemma 3.10]{SSY}.
\end{proof}

\begin{remark}
Since if $D(x)$ has solutions in $\BF_p$, then $D(x)$ will split in $\BF_{p^2}$, the condition $D(x)$ has solutions in $\BF_p$ is equivalent to that $p$ splits totally in the splitting field of $D(x)$. By the Chebotarev Density Theorem, the primes such that $D(x)$ has solutions in $\BF_p$ have positive density in all the primes congruent to $8$ modulo $9$. Numerical computation shows that the density is nearly $1/3$. So our main theorem covers nearly $2/3$ primes which are congruent to $8$ modulo $9$.
\end{remark}

\section{The explicit Gross-Zagier formulae}
Let $\pi$ be the automorphic representation of $\GL_2(\BA_{\BQ})$ corresponding to ${E^3}{/\BQ}$. Then $\pi$ is only ramified at $3$ with conductor $243$. For $n\in \BQ^\times$, let $\chi_n: \Gal(K^{\ab}/K)\rightarrow\BC^\times$
be the cubic character given by $\chi_n(\sigma)=(\sqrt[3]{n})^{\sigma-1}$. Define
\[L(s,E^3,\chi_n):=L(s-1/2,\pi_K\otimes \chi_n)\]
where $\pi_K$ is the base change of $\pi$ to $\GL_2(\BA_K)$.
We put $\chi=\chi_{3p^2}$ and $\chi'=\chi_{3p}$. By the Artin formalism, we have
$$L(s,E^3,\chi)=L(s,E^p)L(s,E^{9p^2}),\ \ \ L(s,E^3,\chi')=L(s,E^{p^2})L(s,E^{9p}) .$$
As in \cite[Proposition 4.1]{HSY19}, we can prove that the incoherent quaternion algebras over $\BA_{\BQ}$ which satisfies the Tunnell-Saito conditions for $(E^3,\chi)$ and $(E^3,\chi')$ are only ramified at infinity. So our basic settings are the same as in \cite{HSY19}, and the proofs follow the same line which we only sketch here. 

Let $f=\psi:X_0(243)\ra E^3$ be the modular parametrization in (\ref{modular})(we change our notation in order to keep in line with previous works). Then $f$ can be viewed as the newform in $\pi$ (for details see \cite[Page 6918]{HSY19}) and we denote $f_3$ its $3$-part. Define the Heegner cycle
$$P^0_{\chi}(f)=\frac{\sharp \Pic(\CO_p)}{\Vol(K^\times\widehat{\BQ}^\times\backslash \widehat{K}^\times)}
\int_{K^\times\widehat{\BQ}^\times\backslash \widehat{K}^\times}f(P_
{\tau_0})^{\sigma_t}\chi(t)dt,$$
and define $P^0_{\chi^{-1}}(f)$ similarly as in \cite[Theorem 1.6]{CST14}.
 
Let $f'\neq 0$ be a test vector in $V(\pi,\chi)$ which is defined in \cite[Definition 1.4]{CST14} (roughly speaking, it is some $\chi^{-1}$-eigenvector such that the period integral below is nonzero). In our case, the newform $f$ only differs from $f'$ at the local place $3$. For $f_3$, we define
\begin{equation}\label{Eq:Sec4WaldsIntNormalised}
 \beta_3^0(f_3,f_3)=\int_{\BQ_3^\times\backslash K_3^\times}\frac{(\pi(t)f_3,f_3)_3\chi_3(t)}{(f_3,f_3)_3}dt,
\end{equation}
here $(\cdot,\cdot)_3$ is a $\GL_2(\BQ_3)$-invariant pairing on $\pi_3\times \pi_3$ and $K^\times$ is embedded into $\GL_2(\BQ)$ as in (\ref{emb2}). Similarly, one can define $\beta_3^0(f'_3,f'_3)$.

\subsection{The explicit formulae} For integer  $n$,  let $\Omega^{n}$ be the minimal positive real period of $E^n$ defined by
\[\Omega^{n}=\int_{E_{min}^n(\BR)}|\omega^{n}|\]
where $\omega^{n}$ is the invariant differential on the minimal model $E_{min}^n$ of $E^n$. This is the period appearing in the BSD conjecture. However, Stephens studied the period $\Omega_{E^n}$ of $E^n$ defined using the invariant differential $\omega_{E^n}=dx/y$ on $E^n$, see \cite[Lemma 5 in section 5]{Stephens}. To relate $\Omega^n$ and $\Omega_{E^n}$, we need the following Lemma. 
\begin{lem}\label{min}
Let $n$ be a cube-free integer, the minimal Weierstrass equation of $E^n: y^2=x^3+16n^2$ is given as follows:
\begin{enumerate}
\item $Y^2=X^3+\frac{n^2}{4}$ if $2\mid n$;
\item $Y^2+Y=X^3+\frac{n^2-1}{4}$ if $2\nmid n$.
\end{enumerate}
and the transformations are given by $x=2^2X, y=2^3Y$ and $x=2^2X, y=2^3Y+4$ respectively.
\end{lem}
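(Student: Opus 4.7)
The plan is to verify two separate claims in the lemma: first, that the indicated change of variables carries $E^n : y^2 = x^3 + 16n^2$ to an integral Weierstrass equation of the stated shape, and second, that the resulting integral model is in fact a \emph{minimal} Weierstrass equation globally.

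For the first step, I would simply substitute the transformations. In case (1), $x = 2^2X$, $y = 2^3Y$ gives $64Y^2 = 64X^3 + 16n^2$; dividing by $64$ yields $Y^2 = X^3 + n^2/4$, which has integer coefficients precisely when $4 \mid n^2$, i.e.\ $2 \mid n$. In case (2), $x = 2^2X$, $y = 2^3Y + 4$ gives $(8Y+4)^2 = 64X^3 + 16n^2$, and rearranging produces $Y^2 + Y = X^3 + (n^2-1)/4$; this is integral exactly when $8 \mid n^2 - 1$, equivalently $n$ odd (using $n^2 \equiv 1 \pmod 8$ for odd $n$). So the change of variables is admissible over $\BZ$ in each of the two cases.

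For minimality, I would compute the discriminant of each new model using the standard invariants $b_2,b_4,b_6,b_8$. In case (1) the equation $Y^2 = X^3 + n^2/4$ has $b_6 = n^2$ and all other $b_i$ zero, giving $\Delta = -27\,b_6^2 = -27n^4$. In case (2), $a_3 = 1$ and $a_6 = (n^2-1)/4$ yield $b_6 = 1 + 4a_6 = n^2$, again with the other $b_i$ vanishing, so $\Delta = -27n^4$ as well. Now apply Silverman's criterion (\emph{Arithmetic of Elliptic Curves}, VII.1.3): a Weierstrass model with $v_\ell(\Delta) < 12$ is automatically minimal at $\ell$. Since $n$ is cube-free, $v_\ell(n) \leq 2$ for every prime, whence $v_\ell(n^4) \leq 8$. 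For $\ell \neq 3$ this gives $v_\ell(\Delta) \leq 8 < 12$, and for $\ell = 3$ we get $v_\ell(\Delta) \leq 3 + 8 = 11 < 12$. Hence the model is minimal at every prime.

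The only delicate point is ensuring the $\BZ$-integrality of the transformed equation, which forces the split by parity of $n$ and uses $8 \mid n^2 - 1$ in the odd case; after that, minimality is a short discriminant computation combined with the cube-free hypothesis. I do not expect any genuine obstacle — the lemma is essentially a bookkeeping statement about reducing a cubic twist family to its minimal model, needed later only to calibrate the periods $\Omega^n$ against Stephens's periods $\Omega_{E^n}$.
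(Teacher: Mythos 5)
Your proof is correct and follows essentially the same route as the paper: compute the discriminant $-27n^4$ of each transformed model and invoke Silverman's remark that $v_\ell(\Delta)<12$ (equivalently, a $12$th-power-free discriminant, which follows from $n$ being cube-free) forces minimality at every prime. The only differences are cosmetic — you spell out the integrality of the change of variables, and your discriminant $-27n^4$ in case (1) is in fact the correct value (the paper's $-3^3 n^4/4$ appears to be a slip that does not affect the argument).
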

\begin{proof}
If $2\mid n$, the discriminant of the Weierstrass equation in $(1)$ is $-3^3\frac{n^4}{4}$ which is 12th power-free. If $2\nmid n$, the discriminant of the Weierstrass equation in $(2)$ is $-3^3n^4$ which is also 12th power-free. Then by \cite[Page 186, Remark 1.1]{silvermanbook1}, both of the equations in $(1)$ and $(2)$ are minimal.
\end{proof}
\begin{remark}
The proof of Lemma \ref{min} is inspired by \cite[Lemma 1]{Jed} which describes the minimal Weierstrass equation of $E_n: y^2=x^3-432n^2$. 
\end{remark}

\begin{proposition}\label{period}
We have 
\begin{equation*}\label{equation4}
\Omega^{p}\Omega^{9p^2}=\Omega^{p^2}\Omega^{9p}=(p)^{-1}(\Omega^3)^2.
\end{equation*}
\end{proposition}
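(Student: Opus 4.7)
The plan is to reduce everything to the period of the single curve $E^1$ via the cubic-twist isomorphism, and then combine this with the minimal-model transformation from Lemma \ref{min}.

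First, I would set up the comparison between the period $\Omega^n$ of the minimal model and the period of the non-minimal curve $E^n : y^2 = x^3 + 16 n^2$ computed against its standard invariant differential $\omega_{E^n} = dx/(2y)$. Lemma \ref{min} gives an explicit change of variables $x = 4X$, $y = 8Y$ (when $n$ is even) or $y = 8Y + 4$ (when $n$ is odd). In both cases a direct computation shows $\omega_{E^n} = \tfrac{1}{2}\,\omega_{E^n_{\min}}$, and since the transformation is defined over $\BQ$ and gives a bijection between real loci, one concludes
\[\Omega^n \;=\; 2\int_{E^n(\BR)} |\omega_{E^n}| \;=:\; 2\,\Omega_{E^n,\,\mathrm{std}}.\]

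Next, for cube-free $n > 0$ I would exploit the isomorphism
\[\phi_n:\; E^n \lra E^1,\qquad (x,y)\lto \bigl(n^{-2/3}x,\; n^{-1}y\bigr),\]
which is defined over $\BQ(\sqrt[3]{n}) \subset \BR$ and is a bijection $E^n(\BR)\to E^1(\BR)$. A direct pullback gives $\phi_n^* \omega_{E^1} = n^{1/3}\omega_{E^n}$, so
\[\Omega_{E^n,\,\mathrm{std}} \;=\; n^{-1/3}\,\Omega_{E^1,\,\mathrm{std}},\qquad\text{hence}\qquad \Omega^n = n^{-1/3}\,\Omega^1\]
for every cube-free positive integer $n$.

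Finally I would apply this formula with $n = p,\,p^2,\,9p,\,9p^2,\,3$ (all cube-free, since $p\neq 3$) and compute
\[\Omega^{p}\Omega^{9p^2} \;=\; p^{-1/3}(9p^2)^{-1/3}(\Omega^1)^2 \;=\; 9^{-1/3}p^{-1}(\Omega^1)^2,\]
\[\Omega^{p^2}\Omega^{9p} \;=\; p^{-2/3}(9p)^{-1/3}(\Omega^1)^2 \;=\; 9^{-1/3}p^{-1}(\Omega^1)^2,\]
\[(\Omega^3)^2 \;=\; 3^{-2/3}(\Omega^1)^2 \;=\; 9^{-1/3}(\Omega^1)^2,\]
from which the identity $\Omega^p\Omega^{9p^2} = \Omega^{p^2}\Omega^{9p} = p^{-1}(\Omega^3)^2$ follows immediately.

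The only delicate point is the bookkeeping in the first two steps: one must keep track of both the factor of $2$ coming from the minimal-model transformation (which differs between the even and odd cases of Lemma \ref{min} but happens to produce the same answer) and the factor $n^{1/3}$ coming from pulling the invariant differential back along the cubic twist, and one must verify that both isomorphisms are real so as to identify the real loci honestly. After that, the identity is just arithmetic.
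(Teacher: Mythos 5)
Your proof is correct and follows essentially the same route as the paper: both reduce via Lemma \ref{min} to the period of the non-minimal model $y^2=x^3+16n^2$ and then use the $n^{-1/3}$ scaling of that period under the cubic-twist isomorphism $E^n\cong E^1$. The only difference is that you derive this scaling explicitly by pulling back the invariant differential, whereas the paper simply quotes the formula for $\Omega_{E^n}$ from Stephens' Lemma 5; your version is self-contained but identical in substance.
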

\begin{proof}
By Lemma \ref{min},  $\omega^n=2\omega_{E^n}$. Since $E^n_{min}(\BR)=E^n(\BR)$, we have $\Omega^n=2\Omega_{E^n}$. Then the relations in the proposition come from the formula for $\Omega_{E^n}$ in
\cite[Lemma 5 in section 5]{Stephens}.
\end{proof}
Now we give the main theorem in this section.
\begin{thm} \label{thm:GZ}
We  have the following explicit formulae of Heegner points:
\[\frac{L'(1,E^{p})L(1,E^{9p^2})}{\Omega^{p}\Omega^{9p^2}}=3 \cdot \wh{h}_\BQ(z_1),\]
\[\frac{L'(1,E^{p^2})L(1,E^{9p})}{\Omega^{p^2}\Omega^{9p}}=3 \cdot \wh{h}_\BQ(z_2).\]
\end{thm}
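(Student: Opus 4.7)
The plan is to deduce this from the explicit Gross--Zagier formula of Cai--Shu--Tian \cite{CST14} (together with the Yuan--Zhang--Zhang framework), exactly as in the author's earlier work \cite{HSY19}. The starting point is the identity
\[
  L'\!\lrb{\tfrac12,\pi_K\otimes\chi}
  \;=\;
  \frac{2^{-\mu}\,\|f\|_{\mathrm{Pet}}^{2}}{\sqrt{|D_K|}\cdot (\text{local factors})}
  \cdot \beta^0_\infty\cdot\prod_{v\mid 3}\beta^0_v(f_v,f_v)
  \cdot\wh{h}_K\!\lrb{P^0_\chi(f)},
\]
whose exact form is given in \cite[Theorem 1.6]{CST14}. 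Since the incoherent quaternion algebra relevant to $(E^3,\chi)$ is ramified only at infinity (the verification of the Tunnell--Saito conditions being identical to the one carried out in \cite{HSY19}), the newform $f=\psi$ is already a test vector at every finite place away from $3$, and all non-archimedean local factors $\beta^0_v(f_v,f_v)$ with $v\neq 3$ equal $1$ by the standard unramified computation.

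Next I would compute the only nontrivial local factor $\beta^0_3(f_3,f_3)$ defined in (\ref{Eq:Sec4WaldsIntNormalised}). Here $\pi_3$ is the supercuspidal representation of conductor $243=3^5$ attached to $E^3/\BQ_3$, and the character $\chi_3$ of $K_3^\times$ is the cubic ramified character $\chi_{3p^2}$ (resp.\ $\chi_{3p}$). Because $K_3/\BQ_3$ is the unramified quadratic extension, the toric integral reduces to a finite sum over $\CO_{K,3}^\times/\BZ_3^\times(1+3^k\CO_{K,3})$ that can be evaluated explicitly using the matrix coefficient of the new vector $f_3$; the output should be a rational constant depending only on $\pi_3$ and $\chi_3$, and this constant is what accounts for the factor $3$ appearing on the right-hand side of the theorem.

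It then remains to match the Heegner cycle $P^0_\chi(f)$ with the point $z_1$ (and $P^0_{\chi^{-1}}(f)$ with $z_2$). By its very definition $P^0_\chi(f)$ is a weighted sum of the Galois translates $f(P_{\tau_0})^{\sigma_t}\chi(t)$ over $\wh{K}^\times/K^\times\wh{\BQ}^\times$; using Shimura reciprocity (Theorem \ref{co:Galois}) together with Proposition \ref{LCF} to decompose $\Gal(H_{9p}/K)$ as $\langle\sigma_{1+3\omega_3}\rangle\times\Gal(H_p/K)$, the inner sum on the $\sqrt[3]{3}$-tower collapses into the trace $\tr_{H_p(\sqrt[3]{3})/L_{(3,p)}}$ that defines $z$, while the sum over $\Gal(L_{(p)}/K)=\langle\sigma_p\rangle$ weighted by $\chi$ yields precisely the eigencombination $z_1$ of (\ref{R1}); the $\chi^{-1}$ version gives $z_2$. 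This identifies $P^0_\chi(f)$ with $z_1$ up to the explicit cubic factor coming from the normalization of $\chi$.

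Finally I would pass from the $K$-height to the $\BQ$-height via $\wh h_K(z_1)=2\wh h_\BQ(z_1)$ (since $z_1$ lies in the $(-1)$-eigenspace for complex conjugation after twisting to $E^p(K)$), factor the $L$-function by Artin formalism
\[
L(s,E^3,\chi)=L(s,E^{p})L(s,E^{9p^{2}}),
\]
and use Proposition \ref{period} together with $\Omega^3$'s interaction with the Petersson norm $\|f\|^2_{\mathrm{Pet}}$ to convert $\|f\|^2_{\mathrm{Pet}}/\sqrt{|D_K|}$ into $\Omega^p\Omega^{9p^2}$. Combining all pieces gives the first displayed identity; the second is obtained by replacing $\chi$ with $\chi^{-1}$. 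The main obstacle in this plan is the computation of $\beta^0_3(f_3,f_3)$: it requires identifying the supercuspidal $\pi_3$ explicitly enough to write down its matrix coefficients on $K_3^\times$, and confirming that the resulting rational constant is exactly the factor $3$ (rather than, say, $1$ or $9$) claimed in the statement.
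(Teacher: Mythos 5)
Your route is the same as the paper's: apply the explicit Gross--Zagier formula of \cite[Theorem 1.6]{CST14} to $(E^3,\chi)$, identify the Heegner cycle $P^0_\chi(f)$ with (a multiple of) $z_1$, convert the Petersson norm into $\Omega^p\Omega^{9p^2}$ via the period lattice of $E^3_{min}$ and Stephens' period relations, and isolate the local term at $3$. So the strategy is sound. The genuine gap is that you leave the decisive constants uncomputed and, more importantly, you misattribute where the factor $3$ comes from. It is \emph{not} the case that $\beta^0_3$ alone produces the $3$: in the paper the local ratio is $\beta_3^0(f'_3,f'_3)/\beta_3^0(f_3,f_3)=4$ (with $\beta^0_3(f_3,f_3)=1/2$ for the newform and $\beta^0_3(f'_3,f'_3)=\Vol(\BQ_3^\times\backslash K_3^\times)=2$ for the admissible test vector). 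The constant $3$ emerges only after combining this with two other normalizations you do not track: the degree of the modular parametrization, which enters through $8\pi^2(\varphi,\varphi)_{\Gamma_0(243)}=9\sqrt{3}\,(\Omega^3)^2$ (so $\deg f=9$ appears), and the pairing normalization $(f,f)_{\CR'}=\deg f\cdot\Vol(\CR^\times)/\Vol(\CR'^{\times})=6$. The arithmetic is $\tfrac{9}{2}\cdot\tfrac{4}{6}=3$; guessing that the toric integral itself equals $3$ would lead you to a wrong constant.

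A second, smaller gap is the phrase ``up to the explicit cubic factor coming from the normalization of $\chi$'' in the Heegner-cycle matching. The paper pins this down exactly: $P^0_\chi(f)=\tfrac{1}{9}\sum_{t\in\Pic(\CO_{9p})}f(P_{\tau_0})^{\sigma_t}\chi(t)$ because $\#\Pic(\CO_p)/\#\Pic(\CO_{9p})=1/9$, the inner sum over the $\sqrt[3]{3}$-tower collapses to $z_1'=3z_1$, and the cross-term in the hermitian pairing must be evaluated, $\langle z_1',(z_1')^{\sigma'}\rangle_{K,K}=-\tfrac12\wh{h}_K(z_1')$, before one lands on $\langle P^0_\chi(f),P^0_{\chi^{-1}}(f)\rangle_{K,K}=\tfrac{1}{18}\wh{h}_K(z_1')=\wh{h}_\BQ(z_1)$. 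Without carrying out both this bookkeeping and the local computation at $3$ (done in the paper by transporting \cite[Lemmas 4.1--4.5 and Proposition 4.1]{HSY1} to the present setting), the statement is only proved up to an undetermined nonzero rational constant.
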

\begin{proof}We only sketch the proof for the first formula, the proof of the second one is the same.
By \cite[Theorem 1.6]{CST14}, we have
\begin{equation}\label{derivative}
L'(1,E^3,\chi)=\frac{8\pi^2(\varphi,\varphi)_{\Gamma_0(243)}}{2\sqrt{3}p}\cdot\frac{\left\langle P^0_{\chi}(f), P^0_{\chi^{-1}}(f)\right\rangle_{K,K}}{(f,f)_{\CR'}}\cdot \frac{\beta_3^0(f'_3,f'_3)}{\beta_3^0(f_3,f_3)},
\end{equation}
where $\CR'$ is the admissible order of $\M_2(\wh{\BZ})$ for the pair $(\pi,\chi)$ (\cite[Definition 1.3]{CST14}) and $(\cdot,\cdot)_{\CR'}$ is the pairing on $\pi\times \pi^\vee$ defined as in \cite[page 789]{CST14}, and $\langle\cdot,\cdot\rangle_{K,K}$ is a pairing from $E^3(\ov{K})_\BQ\times_K E^3(\ov{K})_\BQ$ to $\BC$  such that $\langle\cdot,\cdot\rangle_{K}=\rm{Tr}_{\BC/\BR}\langle\cdot,\cdot\rangle_{K,K}$ is the Neron-Tate height over the base field $K$, see \cite[page 790]{CST14}. Finally, $\varphi$ is the cusp form of level $243$ and weight $2$ associated to $E^3$, and  $(\varphi,\varphi)_{\Gamma_0(243)}$ is the Petersson norm of $\varphi$. Let $\omega_{\varphi}=2\pi i \varphi(\tau)d\tau$ be the invariant differential on $X_0(243)$, then $f^*\omega^3=\omega_{\varphi}$, see \cite[Page 310]{GZ1986}.

\begin{enumerate}
\item[(1)] 
Using Sagemath we compute that $\{\Omega^{3},\Omega^{3}\cdot(\frac{1}{2}+\frac{\sqrt{-3}}{2})\}$ is a $\BZ$-basis of the period lattice $L$ of $E^3_{min}$. Under the isomorphism    $\BC/L\cong E^3_{min}(\BC)$, the pull back of $\omega^3$ is $dz$. So
\begin{eqnarray*}
\sqrt{3}(\Omega^3)^2=2\int_{\BC/L}dxdy&=&\int_{E^3_{min}(\BC)}|\omega^3\wedge\overline{\omega^3}|\\
 &=&\frac{1}{\deg f}\int_{X_0(243)(\BC)}|f^*\omega^3\wedge \ov{f^*\omega^3}|\\
 &=&\frac{1}{9}\cdot 8\pi^2(\varphi,\varphi)_{\Gamma_0(243)}.
\end{eqnarray*}
Here $\deg f$ is the degree of $f$ as the modular parametrization. By Proposition \ref{period}, we have
\begin{equation*}\label{equation4}
\Omega^{p}\Omega^{9p^2}=\Omega^{p^2}\Omega^{9p}=(p)^{-1}(\Omega^3)^2=\frac{8\pi^2(\varphi,\varphi)_{\Gamma_0(243)}}{9\sqrt{3}p}.
\end{equation*}

\item[(2)] In our case, by \cite[Lemma 2.2 and Lemma 3.5]{CST14},
$$(f,f)_{\CR'}=\deg f\cdot \frac{\Vol(\CR^\times)}{\Vol(\CR^{'\times})}=6.$$
Here $\CR$ is the standard Eichler order of $\M_2(\wh{\BZ})$ of discriminant $243$.
\item[(3)] Finally, we deal with the Heegner cycle.
$$P^0_{\chi}(f)=\frac{\# \Pic(\CO_p)}{\# \Pic(\CO_{9p})}\sum_{t\in\Pic(\CO_{9p})}f(P_{\tau_0})^{\sigma_t}\chi(t)=\frac{1}{9}\sum_{t\in\Pic(\CO_{9p})}f(P_{\tau_0})^{\sigma_t}\chi(t).$$
Let
\[z'_1=\sum_{\sigma\in \Gal(H_{9p}/L(9p))} f(P_{\tau_0})^\sigma\chi(\sigma)=3z_1\in E^3(L_{(9p)}),\]
where $L_{(9p)=K(\sqrt[3]{9p})}$. Then using the fact that $\langle\cdot ,\cdot\rangle_{K,K}$ is bilinear, symmetric and Galois invariant, we can show that
\begin{align*}\langle P^0_{\chi}(f),P^0_{\chi^{-1}}(f)\rangle_{K,K}&=\frac{1}{9^2}
\left\langle\sum_{\sigma\in\Gal(L_{(9p)}/K)}(z'_1)^{\sigma}\chi(\sigma),\sum_{\sigma\in\Gal(L_{(9p)}/K)}
(z'_1)^{\sigma}\chi^{-1}(\sigma)\right\rangle_{K,K}\\
&=\frac{1}{27}\left(\langle z'_1,z'_1\rangle_{K,K}-\left\langle z'_1,(z'_1)^{\sigma'}\right\rangle_{K,K}\right),\notag
\end{align*}
where $\sigma'$ is a generator of $\Gal(L_{(9p)}/K)$, for more explicit details see \cite[(4.6)]{HSY19}. 
Without loss of generality, we can assume $(z'_1)^{\sigma'}=[\omega]z'_1$, then
$$\left\langle z'_1,(z'_1)^{\sigma'}\right\rangle_{K,K}=\frac{1}{2}\left(\widehat{h}_K([1+\omega]z'_1)
-\widehat{h}_K([\omega]z'_1)-\widehat{h}_K(z'_1)\right),$$
where $\widehat{h}_K$(resp. $\widehat{h}_\BQ$ below) means the Neron-Tate height over $K$(resp. $\BQ$).
Since $|1+\omega|=|\omega|=1$, by definition, $\widehat{h}_K([1+\omega]z'_1)=\widehat{h}_K([\omega]z'_1)=\widehat{h}_K(z'_1)$. Then
$$
\left\langle z'_1,(z'_1)^{\sigma'}\right\rangle_{K,K}=-\frac{1}{2}\widehat{h}_K(z'_1),$$
and hence
\begin{equation*}\label{equation2}
\left\langle P^0_{\chi}(f),P^0_{\chi^{-1}}(f)\right\rangle_{K,K}=\frac{1}{18}\widehat{h}_K(z'_1)=\frac{1}{9}\widehat{h}_\BQ(z'_1)=\wh{h}_\BQ(z_1).
\end{equation*}
\end{enumerate}

Combining (\ref{derivative}) and (1)-(3) and Corollary \ref{ration} below, we get
\[\frac{L'(1,E^{p})L(1,E^{9p^2})}{\Omega^{p}\Omega^{9p^2}}=3 \cdot \wh{h}_\BQ(z_1).\]
\end{proof}

\subsection{Local Waldspurger periods} Again, the treatment of our local periods is the same with \cite[section 4]{HSY1}. So we only list the local arithmetic information about $E^3$, $\chi$ and $\chi'$,  but refer the proofs to the corresponding results in \cite[section 4]{HSY1}. Let $\Theta$ be the unitary Hecke character corresponding to $E^3$ through the CM theory.
Since $p$ is a cube in $K_3$, the $3$-parts of $\chi$ and $\chi'$ are the same and we denote them uniformly by $\chi_3$ in this subsection and its complex conjugate by $\ov\chi_3$.  We will use $c(\cdot)$ to denote the order of the conductor of a local character.
\begin{lem}\label{thetavalue}
We have $c(\Theta_3)=c(\chi_3)=4$. Their values are
given explicitly by
\begin{center}
\begin{tabular}{|c|c|c|c|c|c|}
\hline
&$-1$&$1+\sqrt{-3}$&$1-\sqrt{-3}$&$1+3\sqrt{-3}$&$\sqrt{-3}$\\
\hline
$\Theta_3$&$-1$&$\omega$&$\omega^2$&$\omega^2$&$i$\\
\hline
$\chi_3$ &$1$&$1$&$1$&$\omega^2$&$1$ \\ 
\hline
$\Theta_3\ov\chi_3$&$-1$&$\omega$&$\omega^2$&$1$&i\\
\hline
\end{tabular}
\end{center}
\end{lem}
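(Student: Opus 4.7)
The plan is to split the computation into the two local characters. For $\chi_3$, which becomes a Kummer-type character after we use that $p^2$ is locally a cube, everything reduces to evaluating the cubic Hilbert symbol $(\,\cdot\,,3)_3$ on $K_3^\times$. For $\Theta_3$, the character is rigid: it is determined globally by the CM theory of $E^3$, and I would read off its local values from the Hecke eigenvalues of the newform of conductor $243$ together with the archimedean infinity type.

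I would first handle both conductor claims. That $c(\Theta_3)=4$ follows from the CM formula $\mathrm{cond}(E^3)=|d_K|\cdot N_{K/\BQ}(\mathfrak{f}_\Theta)$: since $\mathrm{cond}(E^3)=3^5$, $|d_K|=3$, and $E^3$ has good reduction outside $3$, the only support of $\mathfrak f_\Theta$ is the unique prime $\sqrt{-3}$ of $K$ above $3$, forcing $\mathfrak{f}_\Theta=(\sqrt{-3})^4$. That $c(\chi_3)=4$ follows from the two-sided estimate already implicit in Proposition~\ref{LCF}: the inclusion $1+9\CO_{K,3}\subset(K_3^\times)^3$ gives $c(\chi_3)\le 4$, while the local-global Hilbert-symbol computation used there (with $7=(1+3\omega)(1+3\omega^2)$) applied now to $1+3\sqrt{-3}\in U^3$ instead of $1+3\omega_3\in U^2$ yields $\chi_3(1+3\sqrt{-3})=\omega^2\ne 1$, so $c(\chi_3)\ge 4$.

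For the $\chi_3$-row I would use bilinearity of the Hilbert symbol together with the identities $1+\sqrt{-3}=-2\omega^2$ and $1-\sqrt{-3}=-2\omega$. Since $-1=(-1)^3$ is a cube and $-8\in 1+9\CO_{K,3}\subset(K_3^\times)^3$, the product $2\omega^{\pm 2}$ is manipulated modulo cubes to a form on which $(\,\cdot\,,3)_3$ can be read off from the generators of $K_3^\times/(K_3^\times)^3$; this gives the four entries $1,1,1,1$. The value $\chi_3(1+3\sqrt{-3})=\omega^2$ is the local-global computation mentioned above, and is the only genuinely nontrivial entry of this row. For the $\Theta_3$-row I would use that $\Theta$ is trivial on $K^\times$, so for $\alpha\in K^\times$
\[
\Theta_\infty(\alpha)\cdot \Theta_3(\alpha)\cdot\prod_{v\nmid 3\infty}\Theta_v(\alpha)=1,
\]
with the $\Theta_v$ for $v\nmid 3\infty$ being unramified and hence determined by the Frobenius eigenvalues of $E^3$, which can be read from the newform $243b1$ (the only finite $v\ne \sqrt{-3}$ entering into the five inputs are $v\mid 2$ and $v\mid 7$). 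Direct evaluation on the principal ideles $-1,\sqrt{-3},1\pm\sqrt{-3},1+3\sqrt{-3}$ then produces the stated values; the $\Theta_3\bar\chi_3$-row is obtained by multiplying entrywise.

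The main obstacle is the bookkeeping for $\Theta_3$. One has to fix a consistent normalization of the archimedean component $\Theta_\infty$ (so that e.g.\ $\Theta_\infty(\sqrt{-3})=i$ rather than $-i$), identify the correct Frobenius eigenvalue at $2$ and $7$ from $243b1$, and track the cube-root-of-unity choices so that the final table agrees up to signs and powers of $\omega$. The $\chi_3$ side is conceptually straightforward once the nontriviality at $1+3\sqrt{-3}$ is established, but the $\Theta_3$ side requires the delicate $\mathfrak f=(\sqrt{-3})^4$-level compatibility between the CM idele class character and the modular form attached to $E^3$.
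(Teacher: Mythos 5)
Your plan is correct and is essentially the argument the paper relies on: the paper's ``proof'' is only a citation to Lemmas 4.1--4.2 of the reference \emph{HSY1}, and those lemmas obtain the table by exactly the two mechanisms you describe --- the CM conductor relation $N_{E^3}=|d_K|\,N_{K/\BQ}(\mathfrak{f}_\Theta)$ together with global triviality of $\Theta$ on $K^\times$ and unramifiedness away from $3$ (reducing everything to $\Theta_\infty$ and the Frobenius values at $2$ and the prime above $7$), and cubic Hilbert-symbol local--global computations for $\chi_3$ of the same kind as in Proposition 2.1 of this paper. The only remaining work is the sign and $\omega$-power bookkeeping for $\Theta_3$ (choice of infinity type and of Frobenius normalization), which you correctly identify as the delicate point.
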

\begin{proof}
The proof is exactly the same as \cite[Lemma 4.1 and Lemma 4.2]{HSY1}.


\end{proof}

Let $\theta_3$ be the $3$-adic character which parametrizes the supercuspidal representation $\pi_3$ via compact-induction construction as in \cite[Section 2.2]{HSY1}. The test vector issue for Waldspurger's period integral is closely related to $c(\theta_3\ov\chi_3)$ or $c(\theta_3\chi_3)$. We can work out these using Lemma \ref{thetavalue} and the relation between $\theta_3$ and $\Theta_3$ in \cite[Theorem 2.8]{HSY1}. Let $\psi_3$ be the additive character such that $\psi_3(x)=e^{2\pi i \iota(x)}$ where $\iota:\BQ_3\rightarrow \BQ_3/\BZ_3 \subset \BQ/\BZ$ is the map given by $x\mapsto -x\mod \BZ_3$. Let $\psi_{K_3}(x)=\psi_3\circ \Tr_{K_3/\BQ_3}(x)$ be the additive character of $K_3$. Denote $\alpha_{\theta_3\ov\chi_3}$  the number such that $\theta_3\ov\chi_3(1+u)=\psi_{K_3}(\alpha_{\theta_3\ov\chi_3}u)$ for any $u\in \sqrt{-3}\CO_{K,3}$ whose existence follows from \cite[Lemma 2.1]{HSY1}.

\begin{lem}
We have $c(\theta_3\ov\chi_3)=2$ and $\alpha_{\theta_3\ov\chi_3}=\frac{1}{3\sqrt{-3}}$. Moreover,  $c(\theta_3\ov{\chi}_3)< c(\theta_3\chi_3)$. 
\end{lem}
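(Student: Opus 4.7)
The plan is to follow the same template as \cite{HSY1}, converting the explicit values of $\Theta_3\ov\chi_3$ already tabulated in Lemma \ref{thetavalue} into values of $\theta_3\ov\chi_3$ (and $\theta_3\chi_3$) by means of the relation between $\theta_3$ and $\Theta_3$ recorded in \cite[Theorem 2.8]{HSY1}. That relation writes $\theta_3 = \Theta_3\cdot\mu_3$ for an explicit ``rectifier'' character $\mu_3$ of $K_3^\times$ depending only on the ramified quadratic extension $K_3/\BQ_3$; its values on the generators $-1,\,1\pm\sqrt{-3},\,1+3\sqrt{-3},\,\sqrt{-3}$ are listed in \cite{HSY1}. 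So step one is simply to import $\mu_3$ and multiply through the third row of the table of Lemma \ref{thetavalue} to obtain the corresponding row for $\theta_3\ov\chi_3$.

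Next, to prove $c(\theta_3\ov\chi_3)=2$, I would check that the resulting values on $1+3\sqrt{-3}$ and on $1+9=1+(\sqrt{-3})^4$ are trivial, while on $1\pm\sqrt{-3}$ they are not. Since the pairs $\{1+3\sqrt{-3},\,1+9\}$ and $\{1\pm\sqrt{-3}\}$ generate the quotients $(1+(\sqrt{-3})^2\CO_{K,3})/(1+(\sqrt{-3})^5\CO_{K,3})$ and $(1+\sqrt{-3}\CO_{K,3})/(1+(\sqrt{-3})^2\CO_{K,3})$ respectively, this pins down the conductor exponent to be $2$.

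Having established $c(\theta_3\ov\chi_3)=2$, the additive parameter $\alpha_{\theta_3\ov\chi_3}$ is determined by the single identity $\theta_3\ov\chi_3(1+\sqrt{-3})=\psi_{K_3}(\alpha_{\theta_3\ov\chi_3}\cdot\sqrt{-3})$. Expanding $\psi_{K_3}=\psi_3\circ\Tr_{K_3/\BQ_3}$ and $\Tr_{K_3/\BQ_3}(\alpha\sqrt{-3})$ for $\alpha\in K_3$, I would solve explicitly for $\alpha$ and check that the ansatz $\alpha=\frac{1}{3\sqrt{-3}}$ reproduces the computed cube root of unity on the right-hand side. A second check on $1-\sqrt{-3}$ (or any other element of $\sqrt{-3}\CO_{K,3}$) would confirm the formula.

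For the strict inequality $c(\theta_3\ov\chi_3)<c(\theta_3\chi_3)$, I would run the analogous computation of $\theta_3\chi_3$ on $1+3\sqrt{-3}$. The cancellation responsible for $c(\theta_3\ov\chi_3)=2$ relied on $\Theta_3\ov\chi_3(1+3\sqrt{-3})=\omega^2\cdot\omega=1$; for $\theta_3\chi_3$ the analogous value is $\omega^2\cdot\omega^2\cdot\mu_3(1+3\sqrt{-3})=\omega\cdot\mu_3(1+3\sqrt{-3})$, which will not be trivial, so the conductor of $\theta_3\chi_3$ is at least $3$. The main obstacle is bookkeeping: making sure the rectifier $\mu_3$ from \cite[Theorem 2.8]{HSY1} is written in the normalization compatible with the additive character $\psi_3$ and the table of Lemma \ref{thetavalue}; once the conventions are aligned everything reduces to a finite check on a handful of explicit generators of $K_3^\times$.
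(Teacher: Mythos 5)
The paper itself disposes of this lemma with a one-line citation to \cite[Lemma 4.5]{HSY1}, so your plan --- transporting the tabulated values of $\Theta_3\ov\chi_3$ to values of $\theta_3\ov\chi_3$ via the relation of \cite[Theorem 2.8]{HSY1} and then reading off the conductor and the additive parameter on explicit generators --- is precisely the computation the paper is outsourcing, and the overall strategy is the right one. Your determination of $\alpha_{\theta_3\ov\chi_3}$ is consistent: with the stated normalization of $\psi_3$ one gets $\psi_{K_3}\bigl(\tfrac{1}{3\sqrt{-3}}\cdot\sqrt{-3}\bigr)=\psi_3\bigl(\Tr_{K_3/\BQ_3}(1/3)\bigr)=\psi_3(2/3)=\omega$, which matches $\Theta_3\ov\chi_3(1+\sqrt{-3})=\omega$ from the table in Lemma \ref{thetavalue} (and the check at $1-\sqrt{-3}$ gives $\omega^2$ on both sides).

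Two points need repair. First, the generation claim in your conductor argument is wrong as stated: $1+3\sqrt{-3}$ and $1+9$ lie in $1+(\sqrt{-3})^{3}\CO_{K,3}$ and $1+(\sqrt{-3})^{4}\CO_{K,3}$ respectively, so they do not generate $(1+3\CO_{K,3})/(1+(\sqrt{-3})^{5}\CO_{K,3})$; you are missing the graded piece $(1+3\CO_{K,3})/(1+3\sqrt{-3}\,\CO_{K,3})$. Since $c(\theta_3)=c(\chi_3)=4$, it suffices to verify triviality on $(1+3\CO_{K,3})/(1+9\CO_{K,3})$, which is generated by $1+3\sqrt{-3}$ together with $4=(1+\sqrt{-3})(1-\sqrt{-3})$; the table gives the value $\omega\cdot\omega^{2}=1$ on the latter, so the conclusion $c(\theta_3\ov\chi_3)=2$ stands, but that generator must be included (and the check at $1+9$ is then redundant). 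Second, your argument for $c(\theta_3\ov\chi_3)<c(\theta_3\chi_3)$ leaves the value $\mu_3(1+3\sqrt{-3})$ undetermined, so ``which will not be trivial'' is not yet justified; either note that triviality of $\theta_3\ov\chi_3$ on $1+3\sqrt{-3}$ (already established) together with $\chi_3^{2}(1+3\sqrt{-3})=\omega\neq 1$ forces $\theta_3\chi_3(1+3\sqrt{-3})\neq 1$, or more simply observe that $\theta_3\chi_3=(\theta_3\ov\chi_3)\cdot\chi_3^{2}$ with $c(\chi_3^{2})=c(\ov\chi_3)=4\neq 2=c(\theta_3\ov\chi_3)$, whence $c(\theta_3\chi_3)=4>2$.
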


\begin{proof}
The proof is the same as \cite[Lemma 4.5]{HSY1}
\end{proof}

\begin{prop}\label{Prop:TestingForNew}
Suppose $\Vol(\BZ_3^\times\backslash\CO_{K,3}^\times)=1$ so that $\Vol(\BQ_3^\times\backslash K_3^\times)=2$.
For $f_3$ being the newform, $K$ being embedded in $\M_2(\BQ)$ as in $(\ref{emb2})$, we have
\begin{equation*}
 \beta^0_3(f_3,f_3)=1/2.
\end{equation*}
\end{prop}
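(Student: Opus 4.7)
The plan is to mirror the strategy of \cite[Proposition 4.6]{HSY1}, using the compact-induction description of $\pi_3$ together with the conductor information for $\theta_3\ov\chi_3$ and $\theta_3\chi_3$ collected above. First, I would recall that under the embedding $(\ref{emb2})$ the subring $\CO_{K,3}^\times$ sits in a specific Iwahori-type compact open subgroup of $\GL_2(\BQ_3)$, and identify the newvector $f_3$ as a function on $\GL_2(\BQ_3)$ arising from compact induction of $\theta_3$ from $K_3^\times \cdot J$ (where $J$ is the appropriate filtration subgroup of the maximal compact). This gives an explicit support: the matrix coefficient
\[\phi(g)=\frac{(\pi_3(g)f_3,f_3)_3}{(f_3,f_3)_3}\]
is supported on $K_3^\times \cdot J$, and on that support it factors through $\theta_3$ times an elementary function on $J$.

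Next I would substitute this into the local integral, parametrize $\BQ_3^\times\backslash K_3^\times$ via a set of coset representatives in $\CO_{K,3}^\times$ modulo $\BZ_3^\times$, and decompose
\[\beta_3^0(f_3,f_3)=\int_{\BQ_3^\times\backslash K_3^\times}\phi(t)\chi_3(t)\,dt\]
as a sum of integrals over the layers $1+\sqrt{-3}^{\,i}\CO_{K,3}$ modulo $1+\sqrt{-3}^{\,i+1}\CO_{K,3}$. On each layer the integrand is, up to constants, a Gauss-sum-like character sum for $\theta_3\ov\chi_3$ (or $\theta_3\chi_3$); by the standard orthogonality principle the sum vanishes whenever the character is nontrivial on the quotient, so the contributing layers are exactly those controlled by $c(\theta_3\ov\chi_3)$ and $c(\theta_3\chi_3)$.

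Since $c(\theta_3\ov\chi_3)=2$ is strictly smaller than $c(\theta_3\chi_3)$ by the preceding lemma, only the ``near-identity'' layers matching the $\theta_3\ov\chi_3$-conductor survive, and there the explicit value $\alpha_{\theta_3\ov\chi_3}=\frac{1}{3\sqrt{-3}}$ lets me evaluate the surviving term using the chosen additive character $\psi_{K_3}$. Finally I would feed in the normalization $\Vol(\BZ_3^\times\backslash\CO_{K,3}^\times)=1$, hence $\Vol(\BQ_3^\times\backslash K_3^\times)=2$ (the index $2$ coming from the square class of $\sqrt{-3}$ in $K_3^\times$ modulo $\BZ_3^\times\CO_{K,3}^\times$), and collect constants to obtain $\beta_3^0(f_3,f_3)=1/2$.

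The main obstacle is the bookkeeping in step two: identifying the exact Iwahori-level at which $f_3$ is the newvector (the level must be compatible with the embedding of $K_3^\times$ prescribed by $(\ref{emb2})$ rather than the standard torus), and tracking how the values of $\Theta_3$, $\chi_3$ tabulated in Lemma \ref{thetavalue} translate into values of $\theta_3$ on matching coset representatives. Once the support and a single reference value of $\phi$ are pinned down correctly, the remaining Gauss-sum collapse and the volume computation are mechanical, and the factor $1/2$ emerges cleanly from the volume ratio rather than from any hidden cancellation.
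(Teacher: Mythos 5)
Your outline follows exactly the route the paper takes: the paper's entire proof of this proposition is the single sentence ``The proof is the same as [Proposition 4.1 of HSY1],'' and your sketch (compact-induction model for $\pi_3$, support of the matrix coefficient, layer-by-layer Gauss-sum vanishing governed by $c(\theta_3\ov\chi_3)=2<c(\theta_3\chi_3)$, the value $\alpha_{\theta_3\ov\chi_3}=\tfrac{1}{3\sqrt{-3}}$, and the volume normalization $\Vol(\BQ_3^\times\backslash K_3^\times)=2$) is precisely the strategy of that cited computation, using exactly the local data the paper assembles in the preceding lemmas. The only caveat is that, like the paper, you stop short of actually carrying out the final coset-sum evaluation that produces the constant $1/2$, so the argument is a correct roadmap rather than a self-contained verification.
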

\begin{proof}
The proof is the same as \cite[Proposition 4.1]{HSY1}.
\end{proof}
\begin{coro}\label{ration}
For the admissible test vector $f'_3$ and the newform $f_3$ we have 
\[\frac{\beta_3^0(f'_3,f'_3)}{\beta_3^0(f_3,f_3)}=4.\]
\end{coro}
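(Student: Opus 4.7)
The plan is to reduce the computation to finding $\beta_3^0(f'_3,f'_3)$ explicitly, since Proposition \ref{Prop:TestingForNew} already furnishes $\beta_3^0(f_3,f_3) = 1/2$. Under the same measure normalization $\Vol(\BZ_3^\times\backslash\CO_{K,3}^\times) = 1$, it suffices to establish that $\beta_3^0(f'_3,f'_3) = 2$, after which the ratio $4$ is immediate.

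First I would realize $\pi_3$ via compact induction from the character $\theta_3$, following \cite[Section 2.2]{HSY1}. The preceding lemma shows $c(\theta_3\ov{\chi}_3) = 2$ and this is strictly smaller than $c(\theta_3\chi_3)$, so $\theta_3\ov{\chi}_3$ is the ``distinguished'' branch. This pins down the admissible order $\CR'$ at $3$ for the pair $(\pi,\chi)$ as the one whose optimally-embedded subgroup of $K_3^\times$ has depth $2$ instead of $4$. The admissible vector $f'_3\in V(\pi,\chi)$ is then the explicit function in the induced model supported on a single coset modulo $\BQ_3^\times(1+\sqrt{-3}^{\,2}\CO_{K,3})^\times$, normalized so that it transforms under this subgroup of $K_3^\times$ by $\chi_3^{-1}$.

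Next I would evaluate the normalized matrix coefficient $(\pi_3(t)f'_3,f'_3)_3/(f'_3,f'_3)_3$ directly from the compact-induction model, exactly as done in the proof of Proposition \ref{Prop:TestingForNew} (which is itself modeled on \cite[Proposition 4.1]{HSY1}). By the eigenvector/support property of $f'_3$, this matrix coefficient equals $\chi_3^{-1}(t)$ for $t$ in the stabilizer of the support coset and vanishes otherwise. Substituting into \eqref{Eq:Sec4WaldsIntNormalised}, the integrand collapses to the constant $1$ on the full quotient $\BQ_3^\times\backslash K_3^\times$ of volume $2$, yielding
\[
\beta_3^0(f'_3,f'_3) \;=\; \Vol(\BQ_3^\times\backslash K_3^\times) \;=\; 2.
\]

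The main obstacle is checking that the admissible order $\CR'$ and the vector $f'_3$ are compatible with the specific embedding \eqref{emb2} of $K^\times$ into $\M_2(\BQ)$ at the prime $3$; concretely, one must verify that the stabilizer of $f'_3$ inside $K_3^\times$ is exactly $\BZ_3^\times(1+\sqrt{-3}^{\,2}\CO_{K,3})^\times$, so that the conductor drop from $f_3$ to $f'_3$ yields precisely a factor of $4$ in the ratio $\beta_3^0(f'_3,f'_3)/\beta_3^0(f_3,f_3)$. This mirrors the local analysis of \cite[Section 4]{HSY1}, adapted to the present conductor $243$ and to the character data in Lemma \ref{thetavalue}, and it is essentially the only place where nontrivial bookkeeping is required.
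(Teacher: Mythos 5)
Your proposal is correct and follows essentially the same route as the paper: the paper's proof simply observes that, by the definition of the test vector as a $\chi^{-1}$-eigenvector under $K_3^\times$, the normalized integrand in \eqref{Eq:Sec4WaldsIntNormalised} is identically $1$, so $\beta_3^0(f'_3,f'_3)=\Vol(\BQ_3^\times\backslash K_3^\times)=2$, and then divides by $\beta_3^0(f_3,f_3)=1/2$ from Proposition \ref{Prop:TestingForNew}. Your extra discussion of the compact-induction model and the admissible order is not needed for this step (and your remark that the matrix coefficient ``vanishes otherwise'' is superfluous once $f'_3$ is a genuine eigenvector for all of $K_3^\times$), but it does not affect the correctness of the argument.
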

\begin{proof}
Keep the normalization of the volumes as in Proposition \ref{Prop:TestingForNew}. From the definition of $f'$, we have $\beta_3^0(f'_3,f'_3)=\Vol(\BQ_3^\times\backslash K_3^\times)=2$. Then the corollary follows from Proposition \ref{Prop:TestingForNew}.
\end{proof}

\subsection{Applications}\label{app}
The explicit Gross-Zagier formulae can help us determine our Heegner points give solutions to which equations. Assuming the BSD conjecture, both $L'(1,E^{p})$ and $L'(1,E^{p^2})$ should be nonzero. By Theorem \ref{thm:GZ}, if $L(1,E^{9p^2})\neq 0$ and $L(1,E^{9p})=0$, then $z_2$ is torsion while $z_1$ is nontorsion and gives a rational point on $E^p$; if $L(1,E^{9p})\neq 0$ and $L(1,E^{9p^2})=0$, then $z_1$ is torsion while $z_2$ is nontorsion and gives a rational point on $E^{p^2}$; if both $L(1,E^{9p^2})$ and $L(1,E^{9p})$ are nonzero, then both $z_1$ and $z_2$ are nontorsion and give rational points on $E^{p}$ and $E^{p^2}$ respectively; if both $L(1,E^{9p^2})$ and $L(1,E^{9p})$ are zero, then both $z_1$ and $z_2$ are torsion points and cannot give rational points on either $E^p$ or $E^{p^2}$. The numerical computation shows all the cases for the central $L$-values can happen.

\section{An example}
Let $E^3_{min}: y^2+y=x^3+2$ be the minimal model of $E^3$. The map $\phi_3: E^3_{min}\ra E^3$ is given by $(x,y)\mapsto (4x,8y+4)$. Since the modular parametrization in Sagemath is designed for minimal models, we first compute the point using $E^3_{min}$ and then map it to $E^3$.

We take $p=17$ and $\tau_0=17\omega/(9(2\omega+1))$. Let $P_{\tau_0}=\psi_{min}(\tau_0)\in E^3_{min}(H_{9p})$. Then

{ \begin{eqnarray*}
x(P_{\tau_0})&=&\left\{\left(\left(\frac{209}{204}\sqrt{-3} + \frac{145}{68}\right)\sqrt{17} + \frac{17}{4}\sqrt{-3} + \frac{35}{4}\right)\sqrt[3]{17^2}\right.\\ 
&&\ \ \ \ \ + \left(\left(\frac{8}{3}\sqrt{-3} + \frac{11}{2}\right)\sqrt{17}+ 11\sqrt{-3} + \frac{45}{2}\right)\sqrt[3]{17} \\ 
&&\ \ \ \ \ \ \ \ \ +\left. \left(\frac{41}{6}\sqrt{-3} + 14\right)\sqrt{17} + 28\sqrt{-3} + \frac{117}{2}\right\}\cdot\sqrt[3]{3^2},
\end{eqnarray*}}
\begin{eqnarray*}
y(P_{\tau_0})&=&\left(\frac{6045}{34}\omega^2\sqrt{17}+\frac{1467}{2}\omega^2\right)\sqrt[3]{17^2} + \left(\frac{915}{2}\omega^2\sqrt{17} + \frac{3771}{2}\omega^2\right)\sqrt[3]{17}\\   &&\ \ \ \ \ \ \ \ \ \ \ +1176\omega^2\sqrt{17}+4848\omega^2-2.
\end{eqnarray*}

We see that $P_{\tau_0}$ is defined over $H_{17}(\sqrt[3]{3})$ as predicted by our Corollary \ref{deffield} and $H_{17}=K(\sqrt[3]{17},\sqrt{17})$. Take the trace of $P_{\tau_0}$ from $H_{17}$ to $K(\sqrt[3]{17})$ we get the point 
$$\left(-\frac{7}{17}\sqrt[3]{17}\sqrt[3]{3}^2, -\frac{57}{34}\sqrt{-3} - \frac{1}{2}\right)+\left(-\omega^2\sqrt[3]{3}^2,\frac{3}{2}\sqrt{-3}- \frac{1}{2}\right)\in E^3_{min}(L_{(3,p)}).$$ 
Mapping to $E^3$, we get the point 
$$z=\left(-\frac{28}{17}\sqrt[3]{17}\sqrt[3]{3}^2, -\frac{228}{17}\sqrt{-3}\right)+\left(-4\omega^2\sqrt[3]{3}^2,12\sqrt{-3}\right)\in E^3(L_{(3,p)}).$$  
Mapping to $E^1$ further, we get the point
$$\phi(z)=\left(-\frac{28}{17}\sqrt[3]{17}, -\frac{76}{17}\sqrt{-3}\right)+\left(-4\omega^2,4\sqrt{-3})\right)\in E^1(L_{(p)}).$$  
Note that
$$\phi(z)-(-4, 4\sqrt{-3})=\left(\left(-\frac{38}{49}\sqrt{-3} + \frac{34}{49}\right)\sqrt[3]{17},-\frac{1292}{343}\sqrt{-3} - \frac{216}{343}\right)$$
is singular mod $\sqrt{-3}$ which is compatible with Theorem \ref{singularcomponent} since we take a degree 2 trace. Also $\phi(z)$ just gives a nontorsion rational point on $E^{17}$ as predicted by our explicit Gross-Zagier formula, since $L(1, E^{9\cdot17})=0$ in this case. Also the polynomial $D(x)$ does not have solutions in $\BF_{17}$,  satisfying the condition in our main theorem.

\bibliographystyle{alpha}
\bibliography{reference}
\end{document}